\DeclareMathOperator{\diag}{diag}
\DeclareMathOperator{\Score}{\mathcal{L}}
\newcommand{\ignore}[1]{}
\newcommand*\samethanks[1][\value{footnote}]{\footnotemark[#1]}
\newcommand{\revised}[1]{{#1}}
\theoremstyle{remark}
\newtheorem{remark}{Remark}
\theoremstyle{definition}
\newtheorem{definition}{Definition}
\newtheorem{prop}{Proposition}
\newtheorem{cond}{Assumption}
\begin{document}
	\title{Consistent Second-Order Conic Integer Programming  for Learning Bayesian Networks}

\author{Simge K\"u\c{c}\"ukyavuz\thanks{Department of Industrial Engineering and Management Sciences, Northwestern University 
  ({simge@northwestern.edu}).}  \thanks{These authors contributed equally to this work.}
\and Ali Shojaie$^\dagger$  \thanks{Department of Biostatistics, University of Washington
  ({ashojaie@uw.edu}).} 
\and Hasan Manzour\thanks{Department of Industrial and Systems Engineering, University of Washington
  ({hmanzour@uw.edu}).}
\and Linchuan Wei\thanks{Department of Industrial Engineering and Management Sciences, Northwestern University 
  ({LinchuanWei2022@u.northwestern.edu}).}
 \and  Hao-Hsiang Wu \thanks{Department of Management Science, 
		National Yang Ming Chiao Tung University ({hhwu2@nycu.edu.tw}).}
	 }

\ignore{	
	\author{\name Simge K\"u\c{c}\"ukyavuz\thanks{These authors contributed equally to this work.} \email{simge@northwestern.edu} \\
		\addr Department of Industrial Engineering and Management Sciences\\
		Northwestern University\\
		\AND
		\name Ali Shojaie\samethanks \email{ashojaie@uw.edu} \\
		\addr Department of Biostatistics \\ University of Washington\\
		\AND
		\name Hasan Manzour \email{hmanzour@uw.edu} \\
		\addr Department of Industrial and Systems Engineering\\
		University of Washington\\
		\AND
		\name Linchuan Wei  \email{LinchuanWei2022@u.northwestern.edu} \\
		\addr Department of Industrial Engineering and Management Sciences\\
		Northwestern University\\
\AND
		\name Hao-Hsiang Wu  \email{hhwu2@nycu.edu.tw} \\
		\addr  Department of Management Science\\
		National Yang Ming Chiao Tung University\\
	}
}	
%	\editor{}
	
\maketitle
	
\begin{abstract}
Bayesian Networks (BNs) represent conditional probability relations among a set of random variables (nodes) in the form of a directed acyclic graph (DAG), and have found diverse applications in knowledge discovery. We study the problem of learning the sparse DAG structure of a BN from continuous observational data. The central problem can be modeled as a mixed-integer  program with an objective function composed of a convex quadratic loss function and a regularization penalty subject to linear constraints. The  optimal solution to this mathematical program is known to have desirable statistical properties under certain conditions.  However, the state-of-the-art optimization solvers are not able to obtain provably optimal solutions to the existing mathematical formulations for medium-size problems within reasonable computational times. To address this difficulty, we tackle the problem from both computational and statistical perspectives. 
On the one hand, we propose a concrete early stopping criterion to terminate the branch-and-bound process in order to obtain a near-optimal solution to the mixed-integer program, and establish the consistency of this approximate solution. On the other hand, we improve the existing formulations by replacing the linear ``big-$M$" constraints that represent the relationship between the continuous and binary indicator variables with second-order conic constraints.  Our numerical results demonstrate the effectiveness of the proposed approaches. 

{\bf Keywords:} Mixed-integer conic programming, Bayesian networks, directed acyclic graphs, early stopping criterion, consistency.
\end{abstract}

\section{Introduction}

A Bayesian network (BN) is a probabilistic graphical model consisting of a labeled directed acyclic graph (DAG) $\mathcal{G} = (V, E)$, in which the vertex set $V = \{V_1, \dots, V_m\}$ corresponds to $m$ random variables, and the edge set $E$ prescribes a decomposition of the joint probability distribution of the random variables based on their parents in $\mathcal{G}$. The edge set $E$ encodes Markov relations on the nodes in the sense that each node is conditionally independent of its non-descendents given its parents. BNs have been used in knowledge discovery \citep{spirtes2000causation, ChenDrtonWang19}, classification \citep{aliferis2010local}, feature selection \citep{gao2015structured}, latent variable discovery \citep{lazic2013structural} and genetics \citep{ott2003finding}. They also play a vital part in causal inference \citep{pearl2009causal}.    

In this paper, we \revised{propose reformulations of the}  mixed-integer quadratic programs (MIQP)  for learning the optimal DAG structure of BNs given $n$ continuous observations from a system of linear structural equation models (SEMs). While there exist exact integer-programming (IP) formulations for learning DAG structure with \emph{discrete} data \citep{cussens2010maximum, cussens2012bayesian, hemmecke2012characteristic, studeny2013polyhedral, bartlett2013advances, JMLR:v17:14-479,oates2016exact, bartlett2017integer, cussens2017polyhedral,cussens2017bayesian}, the development of {tailored} computational tools for learning the optimal DAG structure from \emph{continuous} data has received less attention. In principle, exact methods developed for discrete data can be applied to {continuous} data. However, such methods result in exponentially sized formulations in terms of the number of binary variables. A common practice to circumvent the exponential number of binary variables is to limit the in-degree of each node \citep{cussens2012bayesian,cussens2017bayesian, bartlett2017integer}. But, this may result in sub-optimal solutions. On the contrary, MIQP formulations for learning DAGs corresponding to linear SEMs require a \textit{polynomial} number of binary variables. This is because for BNs with linear SEMs, the score function --- i.e., the penalized negative log-likelihood (PNL) --- can be explicitly written as a function of the coefficients of linear SEMs \citep{shojaie2010penalized, van2013ell, park2017bayesian, manzour2019integer}.  \revised{In contrast to the existing MIQPs \citep{park2017bayesian, manzour2019integer}, our reformulations exploit the convex quadratic objective and the relationship between the continuous and binary variables to improve the strength of the continuous relaxations.}

Continuous BNs with linear SEMs have witnessed a growing interest in the statistics and computer science communities \citep{van2013ell, raskutti2013learning, loh2014high, ghoshal2016information, solus2017consistency}. 
In particular, it has been shown that the solution obtained from solving the  PNL augmented by $\ell_0$ regularization, \revised{which introduces a  penalty  on  the number of non-zero  arc weights in the estimated DAG}, achieves desirable statistical properties \citep{peters2013identifiability, van2013ell, loh2014high}. 
Moreover, if the model is \emph{identifiable} \citep{peters2013identifiability, loh2014high}, \revised{that is when the true causal graph can be identified from the joint distribution,} then such a solution is guaranteed to uncover the true causal DAG when the sample size $n$ is large enough. 
However, given the difficulty of obtaining exact solutions, existing approaches for learning DAGs from linear SEMs have primarily relied on \emph{heuristics}, using techniques such as coordinate descent \citep{fu2013learning, aragam2015concave, han2016estimation} and non-convex continuous optimization \citep{zheng2018dags}. 
Unfortunately, these heuristics are not guaranteed to achieve the desirable properties of the global optimal solution. Moreover, it is difficult to evaluate the statistical properties of a sub-optimal solution with no optimality guarantees \citep{koivisto2012advances}. To bridge this gap, in this paper we develop mathematical formulations for learning optimal BNs from linear SEMs using a PNL objective with $\ell_0$ regularization. By connecting the optimality gap of the mixed-integer program to the statistical properties of the solution, we also establish an \emph{early stopping criterion} under which we can terminate the branch-and-bound procedure and attain a solution which asymptotically recovers the true parameters with high probability. 

Our work is related to recent efforts to develop exact tailored methods for DAG learning from continuous data.\ \cite{xiang2013lasso} show that $A^{\ast}$-lasso algorithm tailored for DAG structure learning from continuous data with $\ell_1$-regularization, \revised{which introduces a penalty on the sum of absolute values of the arc weights,}  is more effective than the previous approaches based on dynamic programming  \citep[e.g.,][]{silander2006simple} that are suitable for both discrete and continuous data.\ \cite{park2017bayesian} develop a mathematical program for  DAG structure learning with $\ell_1$ regularization. \cite{manzour2019integer} improve and extend the formulation by \cite{park2017bayesian} for DAG learning from continuous data with both $\ell_0$ and $\ell_1$ regularizations. The numerical experiments by \cite{manzour2019integer} demonstrate that as the number of nodes grows, their MIQP formulation outperforms $A^{\ast}$-lasso and the existing IP methods; this improvement is both in terms of reducing the IP optimality gap, when the algorithm is stopped due to a time limit, and in terms of computational time, when the instances can be solved to optimality. In light of these recent efforts, the current paper makes important contributions to this problem at the intersection of statistics and optimization.  

\begin{itemize}
	\item The statistical properties of \textit{optimal} PNL with $\ell_0$ regularization have been studied extensively \citep{loh2014high,van2013ell}. However, it is often difficult to obtain an optimal  solution and no results have been established on the statistical properties of approximate solutions. In this paper, we give an early stopping criterion for the branch-and-bound process under which the approximate solution gives consistent estimates of the true coefficients of the linear SEM. Our result leverages the statistical consistency of the PNL estimate with $\ell_0$ regularization \citep{van2013ell, peters2013identifiability} along with the properties of the branch-and-bound method wherein both lower and upper bound values on the objective function are available at each iteration. By connecting these two properties, we obtain a concrete early stopping criterion, as well as a  proof of consistency of the approximate solution. To the best of our knowledge, this result is the first of its kind for DAG learning. 
	\item In spite of recent progress, a key challenge in learning DAGs from linear SEMs is enforcing bounds on arc weights. This is commonly modeled using the standard ``big-$M$ constraint" approach \citep{park2017bayesian, manzour2019integer}. As shown by \cite{manzour2019integer}, this strategy leads to poor continuous relaxations for the problem, which in turn results in slow lower bound improvement in the branch-and-bound tree. In particular, \cite{manzour2019integer} establish that all existing big-$M$ formulations achieve the same continuous relaxation objective function under a mild condition (see Proposition~\ref{Prop2}). To circumvent this issue, we present a mixed-integer second-order cone program (MISOCP), which  gives a tighter continuous relaxation than existing big-$M$ formulations \revised{under certain conditions discussed in detail in Section \ref{deltavalue}}. This formulation can be solved by powerful state-of-the-art optimization packages. Our numerical results show the superior performance of MISOCP compared to the existing big-$M$ formulations in terms of improving the lower bound and reducing the optimality gap. \revised{We also compare our method against the state-of-the-art benchmarks \citep{ChenDrtonWang19,pmlr-v84-ghoshal18a} both for identifiable and non-identifiable instances, and show that our method  provides the best estimation among all methods in most of the  networks, especially for the non-identifiable cases.}
	
\end{itemize}

The rest of the paper is organized as follows. In Section~\ref{Sec: SEMs}, we define the DAG structure learning problem corresponding to linear SEMs, and give a general framework for the problem. In Section~\ref{Cons}, we present our early stopping criterion and establish the asymptotic properties of the  solution obtained under this stopping rule. 
 We review existing mathematical formulations in Section~\ref{Sec: Previous work}, and present our proposed mathematical formulations in Section~\ref{Sec: Math models}. 
Results of comprehensive numerical studies are presented in Section~\ref{Sec: Computational}. We end the paper with a summary in Section~\ref{Sec: Conclusion}.  
 
  \raggedbottom 
  
\section{Problem setup: Penalized DAG estimation with linear SEMs} \label{Sec: SEMs}

Let $\mathcal{M} = (V, E)$ be an undirected and possibly cyclic super-structure graph with node set $V=\{1,2,\dots,m\}$ and edge set $E \subseteq V \times V$; let $\overrightarrow{\mathcal{M}} = (V, \overrightarrow{E})$ be the corresponding bi-directional graph with $\overrightarrow{E} =\{(j,k), (k,j) | (j,k) \in E\}$. We refer to undirected edges as \emph{edges} and directed edges as \emph{arcs}.  

We assume that causal effects of continuous random variables in a DAG $\mathcal{G}_0$ are represented by $m$ linear regressions of the form 
\begin{equation} \label{LSLM}
X_k = \sum_{j \in pa^{\mathcal{G}_0}_k} \beta_{jk} X_j + \epsilon_k, \quad k=1,\dots, m,
\end{equation}
\noindent where $X_k$ is the random variable associated with node $k$, $pa^{\mathcal{G}_0}_k$ represents the parents of node $k$ in $\mathcal{G}_0$, i.e., the set of nodes with arcs pointing to $k$; the latent random variable $\epsilon_k$ denotes the unexplained variation in node $k$; and BN parameter $\beta_{jk}$ specifies the effect of node $j$ on $k$ for $j \in pa^{\mathcal{G}_0}_k$. The above model is known as a linear SEM \citep{pearl2009causal}. 

Let $\mathcal{X}=(\mathcal{X}_1, \dots , \mathcal{X}_m)$ be the $n \times m$ data matrix with $n$ rows representing i.i.d.\ samples from each random variable, and $m$ columns representing random variables $X_1, \ldots, X_m$. 
The linear SEM \eqref{LSLM} can be compactly written in matrix form as $\mathcal{X} = \mathcal{X}{B} + \mathcal{E}$, where ${B} = [\beta] \in \mathbb{R}^{m \times m}$ is a matrix with $\beta_{kk}=0$ for $k=1,\dots,m$, $\beta_{jk}=0$ for all $(j,k) \notin E$, and $\mathcal{E}$ is the $n\times m$ `noise' matrix. Then, $\mathcal{G}(B)$ denotes the directed graph on $m$ nodes such that arc $(j,k)$ appears in $\mathcal{G}(B)$ if and only if $\beta_{jk} \neq 0$. Throughout the paper, we will use $B$ and $\beta$ to denote the matrix of coefficients and its vectorized version. 

A key challenge when estimating DAGs by minimizing the loss function \eqref{eqn:lklhd} is that the true DAG is generally not identifiable from observational data. However, for certain SEM distributions, the true DAG is \emph{identifiable} from observational data; \revised{that is when the true causal graph can be identified from the joint distribution}. Two important examples are linear SEMs with possibly non-Gaussian homoscedastic noise variables \citep{peters2013identifiability}, as well as linear SEMs with unequal noise variances that are known up to a constant \citep{loh2014high}. In these special cases, the true DAG can be identified from observational data, without requiring the (strong) `faithfulness' assumption, which is known to be restrictive in high dimensions \citep{uhler2013geometry, sondhi2019reduced}. Given these important implications, in this paper we focus on learning Bayesian networks corresponding to the above \emph{identifiable} linear SEMs, \revised{i.e., settings where the error variances are either equal, or known up to a constant.}

The negative log likelihood for an identifiable linear SEM \eqref{LSLM} with equal noise variances is proportional to  
\begin{equation}\label{eqn:lklhd}
l(\beta; \mathcal{X}) =n\,\text{tr}\left\{(I-{B})(I-{B})^\top \widehat{\Sigma}\right\};
\end{equation}
here $\widehat{\Sigma} =n^{-1} \mathcal{X}^\top \mathcal{X}$ is the empirical covariance matrix, and $I$ is the identity matrix \citep{shojaie2010penalized, van2013ell}. 

To learn \textit{sparse} DAGs, \citet{van2013ell} propose to augment the negative log likelihood with an $\ell_0$ regularization term. Given a super-structure $\mathcal{M}$, the optimization problem corresponding to this penalized negative log-likelihood (PNL$\mathcal{M}$) is given by \begin{subequations} \label{eq:PNLMform}
	\begin{align}
	\textbf{PNL$\mathcal{M}$} \quad	& \underset{B \in {\mathbb R}^{m \times m}}{\min} \quad  \Score(\beta):= l(\beta; \mathcal{X}) + \lambda_n \|\beta\|_0 \label{Eq: Opt} \\
	\text{s.t.} \, \, & \mathcal{G}(B) \, \, \text{induces a DAG from} \, \overrightarrow{\mathcal{M}}, \label{Eq: DAG const}
	\end{align}
\end{subequations}  
where the tuning parameter $\lambda_n$  controls the degree of the $\ell_0$ regularization 
\revised{
\[
\|\beta\|_0:=\sum_{(j,k)\in \overrightarrow E} \mathbbm{1}(\beta_{jk}),
\]
where $ \mathbbm{1}(\beta_{jk})$ is an indicator function with value one if $\beta_{jk}\ne 0$, and 0 otherwise.} The constraint \eqref{Eq: DAG const} stipulates that the resulting directed subgraph is a DAG induced from $\overrightarrow{\mathcal{M}}$. When $\mathcal{M}$ corresponds to a complete graph, PNL$\mathcal{M}$ reduces to the original PNL of \citet{van2013ell}. 

The choice of $\ell_0$ regularization in \eqref{eq:PNLMform} is deliberate. Although $\ell_1$ regularization has attractive computational and statistical properties in high-dimensional regression \citep{bulmann2011statistics}, many of these advantages disappear in the context of DAG structure learning \citep{fu2013learning, aragam2015concave}. By considering $\ell_0$ regularization, \cite{van2013ell} establish the consistency of PNL under appropriate assumptions. More specifically, for a Gaussian SEM, they show that the estimated DAG has (asymptotically) the same number of edges as the DAG with minimal number of edges (minimal-edge I-MAP), and establish the consistency of PNL for learning sparse DAGs. These results are formally stated in Proposition~\ref{prop:van} in the next section. 

\begin{remark}\label{rem:L2}
A Tikhonov ($\ell_2$) regularization term, $\mu \|\beta\|_2^2$, for a given $\mu > 0$, can also be added to the objective \eqref{Eq: Opt} to obtain more stable solutions \citep{bertsimas2016best}. 
\end{remark}

In our earlier work \citep{manzour2019integer}, we observe that existing mathematical formulations are slow to converge to a provably optimal solution, $\beta^\star$, of \eqref{eq:PNLMform} using the state-of-the-art optimization solvers. Therefore, the solution process  needs to be terminated early to yield a feasible solution, $\hat \beta$ with a positive optimality gap, i.e., a positive difference between the upper bound on  $\Score(\beta^\star)$ provided by $\Score(\hat \beta)$  and a lower bound on $\Score(\beta^\star)$ provided by the best continuous relaxation obtained by the branch-and-bound algorithm upon termination.  
However, statistical properties of such a sub-optimal solution are not well-understood. Therefore, there exists a gap between theory and computation: while the optimal solution has nice statistical properties, the properties of the solutions obtained from approximate computational algorithms are not known. Moreover, due to the non-convex and complex nature of the problem, characterizing the properties of the solutions provided by heuristics is especially challenging. In the next section, we bridge this gap by developing a concrete early stopping criterion and establishing the consistency of the solution obtained using this criterion. 

\section{Early stopping criterion for DAG learning} \label{Cons}
In this section, we establish a sufficient condition for the approximate solution of PNL$\mathcal{M}$, $\hat{\beta}$ to be consistent for the true coefficients, $\beta^{0}$; that is  $\|\beta^{0} - \hat{\beta}\|_2^2 = \mathcal{O}\left(s^0\log(m) / n \right)$, where $s^0$ is the number of arcs in the true DAG, and $x \asymp y$ means that $x$ converges to $y$ asymptotically. 
This result is obtained by leveraging an important property of the branch-and-bound process for integer programming that provides both lower and upper bounds on the objective function $ \Score(\beta^\star)$ upon early stopping, as well as the consistency results of the PNL estimate with $\ell_0$ regularization. Using the insight from this new result, we then propose a concrete stopping criterion for terminating the branch-and-bound process that results in consistent parameter estimates. 

Let $\mathrm{LB}$ and $\mathrm{UB}$, respectively, denote the lower and upper bounds on the optimal objective function value \eqref{Eq: Opt}  obtained from solving \eqref{eq:PNLMform} under an early stopping criterion (i.e., when the obtained solution is not necessarily optimal). We define the difference between the upper and lower bounds as the \emph{absolute} optimality gap: $\mathrm{GAP} = \mathrm{UB} - \mathrm{LB}$. Let $\hat{\mathcal{G}}$ and $ \hat{\beta}$ denote the structure of the DAG and coefficients of the arcs from optimization model \eqref{eq:PNLMform} under the early stopping condition with sample size $n$ and regularization parameter $\lambda_n$. 
Let ${\mathcal{G}^{\star}}$ and $\beta^{\star}$ denote the DAG structure and coefficients of arcs obtained from the optimal solution of \eqref{eq:PNLMform}, and $\mathcal{G}^{0}$ and $\beta^{0}$ denote the true DAG structure and the coefficient of arcs, respectively. 
We denote the number of arcs in $\hat{\mathcal{G}}$, $\mathcal{G}^{0}$, and ${\mathcal{G}^{\star}}$ by $\hat{s}$, $s^0$, and $s^{\star}$, respectively. The score value in \eqref{Eq: Opt} of each solution is denoted by $\Score(\phi)$ where $\phi \in \{\beta^{\star}, \hat{\beta}, \beta^0\}$. 

Next, we present our main result. Our result extends \citeauthor{van2013ell}'s result on consistency of PNL$\mathcal{M}$ for the optimal, but computationally unattainable, estimator, $\beta^{\star}$ to an approximate estimator, $\hat\beta$, obtained from early stopping. In the following (including the statement of our main result, Proposition~\ref{EarlyProp}), we assume that the super-structure $\mathcal{M}$ is known \emph{a priori}. The setting where $\mathcal{M}$ is estimated from data is discussed at the end of the section. We begin by stating the key result from \cite{van2013ell} and the required assumptions. Throughout, we consider a Gaussian linear SEM of the form \eqref{LSLM}. We denote the variance of error terms, $\epsilon_j$, by $\sigma_{jj}^2$ and the true covariance matrix of the set of random variables, $(X_1,\ldots, X_m)$ by the $m \times m$ matrix $\Sigma$. 

\begin{cond}\label{cond:1}
\revised{Suppose $m < c_0 n/\log(n)$ for some constant $c_0 > 0$ and }
for some constant $\sigma_0^2$, it holds that $\max_{j=1,\ldots,m}\sigma_{jj}^2 \leq \sigma_0^2$. Moreover, the smallest \revised{and largest} eigenvalues of $\Sigma$, $\kappa_{\min}(\Sigma)$ \revised{and $\kappa_{\max}(\Sigma)$}, \revised{satisfy
\[
	\left(\frac{c_0}{\log(n)}\right)^{1/2} < \underline{\kappa} \leq \kappa_{\min}(\Sigma) < \kappa_{\max}(\Sigma) \leq \overline{\kappa} < \infty
\] 
for constants $\underline{\kappa}$ and $\overline{\kappa}$. 
}
\end{cond}

\begin{cond}\label{cond:2}
Let, as in \cite{van2013ell}, $\widetilde\Omega(\pi)$ be the precision matrix of the vector of noise variables for an SEM given permutation $\pi$ of nodes. Denoting the diagonal entries of this matrix by $\tilde \omega_{jj}$, there exists a constant $\omega_0 > 0$ such that if $\widetilde\Omega(\pi)$ is not a multiple of the identity matrix, then 
\[
	m^{-1} \sum_{j=1}^m\left( (\tilde\omega_{jj})^2 -1 \right)^2 > 1/ \omega_0. 
\]
\end{cond}

\begin{prop} (Theorem 5.1 in \cite{van2013ell}) \label{prop:van}
Suppose Assumptions~\ref{cond:1} and \ref{cond:2} hold and let $\alpha_0:= \min\{\frac{4}{m}, 0.05\}$. Then for an $\ell_0$ regularization parameter  $\lambda \asymp \log(m)/n$, it holds with probability at least $1-\alpha_0$ that 
\[
	\|\beta^{\star}-\beta^{0}\|_2^2 + \lambda s^{\star} = \mathcal{O}\left(\lambda s^0\right).
\]
\end{prop}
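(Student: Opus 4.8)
The plan is to read this bound as a classical \emph{oracle inequality} for the $\ell_0$-penalized (Gaussian) least-squares estimator and to prove it by the standard two-step scheme: extract a deterministic inequality from the optimality of $\beta^\star$, then split the resulting loss comparison into a population part governed by the curvature of the expected loss and a stochastic part governed by the concentration of $\widehat{\Sigma}$.

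First I would write the basic inequality. Since the true DAG $\mathcal{G}^0$ is induced from $\overrightarrow{\mathcal{M}}$, the vector $\beta^0$ is feasible for PNL$\mathcal{M}$, so optimality of $\beta^\star$ gives $l(\beta^\star;\mathcal{X}) + \lambda s^\star \le l(\beta^0;\mathcal{X}) + \lambda s^0$. Writing the loss as $l(\beta;\mathcal{X}) = n\,\text{tr}\{(I-B)(I-B)^\top \widehat{\Sigma}\}$ and substituting $\widehat{\Sigma} = \Sigma + (\widehat{\Sigma}-\Sigma)$, I would decompose the loss gap $l(\beta^\star;\mathcal{X}) - l(\beta^0;\mathcal{X})$ into a \emph{population} term $n\,\text{tr}\{[(I-B^\star)(I-B^\star)^\top - (I-B^0)(I-B^0)^\top]\Sigma\}$ and a \emph{noise} term $n\,\text{tr}\{[(I-B^\star)(I-B^\star)^\top - (I-B^0)(I-B^0)^\top](\widehat{\Sigma}-\Sigma)\}$.

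For the population term, set $D = B^\star - B^0$ and expand: the difference equals a quadratic piece $n\,\text{tr}\{D^\top \Sigma D\}$ plus a cross piece proportional to $\text{tr}\{(I-B^0)^{-1}D\}$. When $\beta^\star$ respects the true topological ordering, $D$ is strictly triangular in that order and $(I-B^0)^{-1}D$ has zero trace, so the cross piece vanishes and the population term is at least $n\,\kappa_{\min}(\Sigma)\|\beta^\star-\beta^0\|_2^2$ by Assumption~\ref{cond:1}. The complementary case, in which $\beta^\star$ corresponds to a wrong permutation, is where the equal-variance structure together with Assumption~\ref{cond:2} is essential: the condition ruling out $\widetilde{\Omega}(\pi)$ being a multiple of the identity furnishes a strictly positive population-score gap between the true ordering and every competing one, so such solutions cannot be near-optimal. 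For the noise term I would use that, for Gaussian data, $\widehat{\Sigma}-\Sigma$ concentrates entrywise at rate $\sqrt{\log m / n}$ on an event of probability at least $1-\alpha_0$ (with $\alpha_0 = \min\{4/m, 0.05\}$ coming from a tail/union bound over the entries), and then bound the trace by the entrywise sup-norm of $\widehat{\Sigma}-\Sigma$ times the number of active coordinates of $D$, which is at most $s^\star + s^0$. Choosing $\lambda$ of order $\log m / n$ makes the $\ell_0$ penalty large enough to absorb these stochastic cross terms.

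Assembling the pieces and applying Young's inequality $2ab \le a^2/c + c\,b^2$ to the $s^\star$-dependent noise contribution lets me move half of it to the left-hand side against the curvature term, yielding $\|\beta^\star-\beta^0\|_2^2 + \lambda s^\star \le C\lambda s^0$ for a constant $C$ on the same high-probability event. The main obstacle is the non-convexity created by the $\ell_0$ penalty and the DAG constraint: unlike ridge or $\ell_1$ estimators there is no KKT/subgradient description of $\beta^\star$, so the stochastic term must be controlled \emph{uniformly} over all candidate supports and the random dimension $s^\star$ handled by a union bound over subsets combined with the self-bounding step above. Equally delicate is proving that the two assumptions jointly pin $\beta^0$ down as the unique population optimum with a quantitative curvature gap; verifying this identifiability-plus-curvature claim, rather than the concentration bookkeeping, is where the real work lies.
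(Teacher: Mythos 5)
This proposition is not proved in the paper at all: it is quoted verbatim as Theorem~5.1 of van de Geer and B\"uhlmann (the cited \texttt{van2013ell}), and the paper's only contribution here is the rescaling remark $\lambda = \lambda_n/n$. So the relevant comparison is between your sketch and the original proof in that reference, and against that benchmark your proposal has the right architecture but a genuine gap at its core. The basic-inequality step, the split of the empirical loss gap into a population curvature term and a stochastic term, the lower bound $n\,\kappa_{\min}(\Sigma)\|\beta^\star-\beta^0\|_2^2$ via Assumption~\ref{cond:1} when the estimate respects the true ordering, and the choice $\lambda \asymp \log(m)/n$ to absorb the noise cross terms are all faithful to how such oracle inequalities are established.

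The gap is the case you defer to a single sentence: when $\beta^\star$ corresponds to a wrong permutation $\pi$, the cross term $\mathrm{tr}\{\Omega(I-B^0)^{-1}D\}$ does not vanish and there is no clean curvature bound around $\beta^0$; one must instead compare $\beta^\star$ to the minimal-edge projection $\tilde\beta^0(\pi)$ for \emph{each} permutation and show, via Assumption~\ref{cond:2}, that any ordering whose noise precision matrix $\widetilde\Omega(\pi)$ is far from a multiple of the identity incurs a quantifiable excess population risk that dominates the penalty savings. Establishing that quantitative gap uniformly over all $m!$ orderings \emph{and} simultaneously over all supports (needed because, as you correctly note, the $\ell_0$ estimator admits no KKT characterization) is the actual content of the cited theorem and is where the specific constant $\alpha_0 = \min\{4/m, 0.05\}$ arises; it does not follow from an entrywise union bound on $\widehat\Sigma - \Sigma$ alone. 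A secondary issue is that your bound on the stochastic trace term by the sup-norm of $\widehat\Sigma-\Sigma$ times the number of active coordinates of $D$ ignores the quadratic block $B^\star B^{\star\top}$, whose entries require a priori control of $\|\beta^\star\|$; the standard remedy is to rewrite the loss difference as $\|\mathcal{X}(B^\star-B^0)\|_F^2 + 2\langle \mathcal{E}, \mathcal{X}(B^0-B^\star)\rangle$ using $\mathcal{X}(I-B^0)=\mathcal{E}$ before applying concentration. As written, your argument establishes the easy half of the theorem and names, but does not close, the hard half.
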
 
Here, $\lambda=\lambda_n/n$, because the loss function \eqref{eqn:lklhd} is that of \cite{van2013ell} scaled by the sample size $n$.
\revised{The next result establishes the consistency of the approximate estimator, $\hat\beta$, obtained using our proposed early stopping strategy.}

\begin{prop} \label{EarlyProp}
	Suppose Assumptions~\ref{cond:1} and \ref{cond:2} hold and let $\alpha_{0} = \min \{\frac{4}{m}, 0.05\}$ and $\lambda \asymp \log(m) / n$. 
	Then, the estimator $\hat\beta$ obtained from early stopping of the  branch-and-bound process such that $\mathrm{GAP} \asymp \mathcal{O}\left(n\lambda s^{0}\right) = \mathcal{O}\left(\log(m) s^0\right)$ 
satisfies 
\begin{equation*}
\left\|\hat\beta - \beta^{0}\right\|_2^2 \asymp \mathcal{O}\left(\frac{\log(m)}{n}s^0\right)
\end{equation*}
with probability \revised{$(1- \alpha_0)$}.
\end{prop}

\begin{proof}
First, by the triangle inequality \revised{and the fact that $2ab \leq a^2 + b^2, \forall a,b \in 
\mathbb{R}$,} 
\revised{
\begin{align}\label{eqn:newbnd1}
	\nonumber \| \hat\beta - \beta^0 \|_2^2 &\leq 
	\left(\| \hat\beta - \beta^\star \|_2 + \| \beta^\star - \beta^0 \|_2\right)^2 \\ 
	\nonumber &=  \| \hat\beta - \beta^\star \|^2_2 + \| \beta^\star - \beta^0 \|^2_2 + 2 \| \hat\beta - \beta^\star \|_2 \| \beta^\star - \beta^0 \|_2 \\ 
	&\leq 2\| \hat\beta - \beta^\star \|_2^2 + 2\| \beta^\star - \beta^ 0 \|_2^2.
\end{align}
}

\revised{Recall that $\beta$ denotes the vectorized coefficient matrix $B$. Then, in a slight abuse of notation, we denote by $\mathcal{X}$ both the vectorized and a block diagonal version of $\mathcal{X}$ and by $\mathcal{E}$ a vectorized version of error $\mathcal{E}$}. Then $\ell(\beta;\mathcal{X})$ can be written as $\ell(\beta;\mathcal{X}) = \| \mathcal{X} - \mathcal{X}\beta \|_2^2$ (see Eq.~\ref{LSLM}). Then, we can write a Taylor series expansion of $\ell\left(\hat\beta;\mathcal{X}\right)$ around $\ell\left(\beta^\star;\mathcal{X}\right)$ to get
\begin{align}\label{eqn:ls2lklhd}
	\| \mathcal{X}(\hat\beta - \beta^\star) \|_2^2 
	= \ell(\hat\beta;\mathcal{X}) - \ell(\beta^\star;\mathcal{X}) - 2(\hat\beta - \beta^\star)^\top \mathcal{X}^\top \mathcal{X} (\beta^\star - \beta^0) + 
	2(\hat\beta - \beta^\star)^\top \mathcal{X}^\top \mathcal{E}.
\end{align}

\revised{But, Proposition~2.1 in \cite{vershynin2012covmat} states that for every $0<\xi <1$,
\begin{equation}\label{eqn:vershynin}
	\left\| \widehat\Sigma - \Sigma \right\|_2 \leq \left(\frac{m}{n}\right)^{1/2}, 
\end{equation}
with probability $1-\xi$. 
Thus, letting $\xi = \alpha_0$, \eqref{eqn:vershynin} holds in our setting with probability $1 - \alpha_0$. 

Since, by Assumption~\ref{cond:1}, $\kappa_{\min}(\Sigma) \geq \underline{\kappa} > \frac{c_0}{\log(n)}$, by Weyl's theorem we have
\begin{equation*}
	\kappa_{\min}(\mathcal{X}^\top \mathcal{X}) = n \kappa_{\min}\left(\widehat\Sigma\right) > n \underline{\kappa} - (mn)^{1/2} > n \underline{\kappa} - n\left(\frac{c_0}{\log(n)}\right)^{1/2} = n\left(\underline{\kappa} - \left(\frac{c_0}{\log(n)}\right)^{1/2}\right),
\end{equation*}
which means that $\kappa_{\min}(\mathcal{X}^\top \mathcal{X}) > 0$ with probability $1 - \alpha_0$. 
}

\revised{
Denoting $c'_n \equiv \left(\underline{\kappa} - \left(\frac{c_0}{\log(n)}\right)^{1/2}\right)^{-1}$, for large enough $n$, we have that, with probability $1 - \alpha_0$,
\begin{equation}\label{eqn:betalower}
	\| \hat\beta - \beta^\star \|_2^2 \leq n^{-1}c'_n \| \mathcal{X}(\hat\beta - \beta^\star) \|_2^2.
\end{equation}
}
\revised{
Combining \eqref{eqn:ls2lklhd} and \eqref{eqn:betalower}, and using triangle inequality again, we get
}
\begin{align}\label{eqn:upperbnd}
	\| \hat\beta - & \beta^\star \|_2^2 \leq n^{-1}c'_n \| \mathcal{X}(\hat\beta - \beta^\star) \|_2^2 \\ \nonumber	
	 =& \revised{n^{-1}c'_n \left( \ell(\hat\beta;\mathcal{X}) - \ell(\beta^\star;\mathcal{X}) - 2(\hat\beta - \beta^\star)^\top \mathcal{X}^\top \mathcal{X} (\beta^\star - \beta^0) + 2(\hat\beta - \beta^\star)^\top \mathcal{X}^\top \mathcal{E} \right)} \\ \nonumber
	\leq&  \revised{n^{-1}c'_n \left| \ell(\hat\beta;\mathcal{X}) - \ell(\beta^\star;\mathcal{X}) - 2(\hat\beta - \beta^\star)^\top \mathcal{X}^\top \mathcal{X} (\beta^\star - \beta^0) + 2(\hat\beta - \beta^\star)^\top \mathcal{X}^\top \mathcal{E} \right|} \\ \nonumber
	\leq& n^{-1}c'_n \left| \ell(\hat\beta;\mathcal{X}) - \ell(\beta^\star;\mathcal{X}) \right| + 2 n^{-1}c'_n (\hat\beta - \beta^\star)^\top \mathcal{X}^\top \mathcal{X} (\beta^\star - \beta^0) + 2 n^{-1}c'_n \left|(\hat\beta - \beta^\star)^\top \mathcal{X}^\top \mathcal{E} \right| \\ \nonumber
	\leq& n^{-1}c'_n\left| \ell(\hat\beta;\mathcal{X}) - \ell(\beta^\star;\mathcal{X}) \right| + 2n^{-1}c'_n \kappa_{\max}(\mathcal{X}^\top \mathcal{X}) \| \hat\beta - \beta^\star \|_2 \| \beta^\star - \beta^0 \|_2 + 
	2n^{-1}c'_n \| \hat\beta - \beta^\star \|_2 \| \mathcal{X}^\top\mathcal{E} \|_2,
\end{align}
where, as before, $\kappa_{\max}$ denotes the maximum eigenvalue of the matrix. 

\revised{
Using a similar argument as the one used above for the minimum eigenvalue of $\mathcal{X}^\top \mathcal{X}$, by \eqref{eqn:vershynin} we have that, with probability $1-\alpha_0$,
\[
	\kappa_{\max}(\mathcal{X}^\top \mathcal{X}) = n \kappa_{\max}\left(\widehat\Sigma\right) \leq n \kappa_{\max}(\Sigma) + n\left(\frac{c_0}{\log(n)}\right)^{1/2} \leq  n \left(\overline\kappa + \left(\frac{c_0}{\log(n)}\right)^{1/2}\right).
\]
Plugging the above bound into \eqref{eqn:upperbnd} we get
\begin{align}\label{eqn:upperbnd2}
	\| \hat\beta - \beta^\star \|_2^2 & \leq
	n^{-1}c'_n\left| \ell(\hat\beta;\mathcal{X}) - \ell(\beta^\star;\mathcal{X}) \right| \\ \nonumber
	&+ 2 c'_n \left(\overline\kappa + \left(\frac{c_0}{\log(n)}\right)^{1/2}\right) \| \hat\beta - \beta^\star \|_2 \| \beta^\star - \beta^0 \|_2 + 
	2n^{-1}c'_n \| \hat\beta - \beta^\star \|_2 \| \mathcal{X}^\top\mathcal{E} \|_2.
\end{align}
}

Now, let $Z = \left\| \hat\beta - \beta^\star \right\|_2$, 
$
\Pi = 2 c'_n \left[ \left(\overline\kappa + \left(\frac{c_0}{\log(n)}\right)^{1/2}\right) \| \beta^\star - \beta^0 \|_2 + n^{-1}\| \mathcal{X}^\top\mathcal{E} \|_2 \right],
$
and 
$
\Gamma = n^{-1}c'_n\left| \ell(\hat\beta;\mathcal{X}) - \ell(\beta^\star;\mathcal{X}) \right|.
$
Then, the inequality in \eqref{eqn:upperbnd2} can be written as $Z^2 \leq \Pi Z + \Gamma$. \revised{Solving for $Z$ and noting that $Z$, $\Gamma$ and $\Pi$ are non-negative, in order to have $Z^2 \leq \Pi Z + \Gamma$, we must have} $Z \leq \left(\Pi + \sqrt{\Pi^2 + 4\Gamma} \,\right) / 2$. 

Next, let $\mathcal{T}$ be the event under which $\Pi = o(1)$. Then, on this set, we have 
\revised{$Z \leq \left(o(1) + \sqrt{o(1) + 4\Gamma} \,\right) / 2$, or, 
$Z^2 \leq \Gamma + o(1)$}; that is
\begin{equation}\label{eqn:newbnd2}
	\left\| \hat\beta - \beta^\star \right\|_2^2 \leq n^{-1}c'_n\left| \ell(\hat\beta;\mathcal{X}) - \ell(\beta^\star;\mathcal{X}) \right| + o(1). 
\end{equation}
Plugging \eqref{eqn:newbnd2} into \eqref{eqn:newbnd1}, on the set $\mathcal{T}$ we have
\begin{align}\label{eqn:newbnd3}
	\| \hat\beta - \beta^0 \|_2^2 & \leq 
		2 n^{-1} c'_n\left| \ell(\hat\beta;\mathcal{X}) - \ell(\beta^\star;\mathcal{X}) \right| + 2\| \beta^\star - \beta^ 0 \|_2^2 + o(1) \\
		& \revised{= 2 n^{-1} c'_n 
		\left| \ell(\hat\beta; \mathcal{X}) - \ell(\beta^{\star}; \mathcal{X}) + (\lambda \hat{s} - \lambda s^\star) - (\lambda \hat{s} - \lambda s^\star) \right| + 2\| \beta^{\star} - \beta^{0} \|_2^2 + o(1)} \nonumber \\
		& \leq 2 n^{-1} c'_n 
		\left| \ell(\hat\beta; \mathcal{X}) - \ell(\beta^{\star}; \mathcal{X}) + \lambda \hat{s} - \lambda s^\star \right| + 2 n^{-1} c'_n \left| \lambda \hat{s} - \lambda s^\star \right|  + 
2\| \beta^{\star} - \beta^{0} \|_2^2 + o(1). \nonumber
\end{align}

Then, using the fact that 
$ \ell(\hat\beta; \mathcal{X}) - \ell(\beta^{\star}; \mathcal{X}) + \lambda \hat{s} - \lambda s^\star = \Score(\hat{\beta}) - \Score(\beta^{\star}) \leq \mathrm{GAP}$
we can write \eqref{eqn:newbnd3} as
\begin{align}\label{eqn:newbnd4}
	\| \hat\beta - \beta^0 \|_2^2 \, & \revised{\leq 
		2 n^{-1} c'_n 
		\left| \Score(\hat\beta; \mathcal{X}) - \Score(\beta^{\star}; \mathcal{X}) \right| + 2 n^{-1} c'_n \lambda s^\star  + 2\| \beta^{\star} - \beta^{0} \|_2^2 + o(1)}, \nonumber \\
		& \leq 2 n^{-1} c'_n \mathrm{GAP} + 2 \| \beta^{\star} - \beta^{0} \|_2^2 +  2 n^{-1} c'_n \lambda s^{\star} + o(1),
\end{align} 
where, in the first inequality, we also use the fact that the penalized estimation procedure chooses at most $n$ nonzero entries. Hence, \revised{since $m < n$ by Assumption~\ref{cond:1}},
\revised{
\[
2n^{-1}c'_n\left| \lambda \hat{s} - \lambda s^\star \right| \leq 2n^{-1}c'_n \lambda s^\star + 2c'_n \lambda = 2n^{-1}c'_n \lambda s^\star + \mathcal{O}\left(\frac{\log(m)}{n}\right) = 2n^{-1}c'_n \lambda s^\star + o(1).
\]
}
Now, by Proposition~\ref{prop:van}, we know that with probability at least $1 - \alpha_0$, $\| \beta^{\star} - \beta^{0} \|_2^2 = \mathcal{O}\left(s^0\log(m)/n\right)$, and $ \lambda s^{\star} = \mathcal{O}\left(s^0\log(m)/n\right)$. 
Moreover, using Gaussian concentration inequalities for $n^{-1}\| \mathcal{X}^\top\mathcal{E} \|_2$
\citep[e.g., Lemma~6.2 in][]{bulmann2011statistics},
the probability of the set $\mathcal{T}$ is lower bounded by the probability that $\| \beta^{\star} - \beta^{0} \|_2^2 = \mathcal{O}\left(s^0\log(m)/n\right)$, which is $1- \alpha_0$. 
Thus, if we stop the branch-and-bound algorithm when 
$$
\mathrm{GAP} = \mathcal{O}\left(n\lambda s^{0}\right) = \mathcal{O}\left(\log(m) s^{0}\right),
$$
then \revised{the first two terms in \eqref{eqn:newbnd4} would both be of order $\mathcal{O}\left( s^0 \log(m) / n \right)$, while the third term, $2n^{-1}c'_n \lambda s^\star$ would be of a smaller order (by an $n^{-1}$ factor)}.
This guarantees that, with probability at least $(1- \alpha_0)$, $\|\hat{\beta}-\beta^0\|_2^2 = \mathcal{O}\left( s^0 \log(m) / n \right)$, as desired. 
\end{proof}

Proposition~\ref{EarlyProp} suggests that the branch-and-bound algorithm can be stopped by setting a threshold $c^{\star} n \lambda s^{0}$ on the value of $\mathrm{GAP} = | \mathrm{UB} - \mathrm{LB} |$ for a constant $c^{\star} > 0$, say $c^{\star}=1$. Such a solution will then achieve the same desirable statistical properties \revised{(in terms of parameter consistency)} as the optimal solution $\beta^{\star}$. 
While $\lambda$ can be chosen data-adaptively (as discussed in Section~\ref{Sec: Computational}), both of these choices depend on the value of $s^0$, which is not known in practice. However, one can find an upper bound for $s^0$ based on the number of edges in the super-structure $\mathcal{M}$. In particular, if $\mathcal{M}$ is the moral graph \citep{pearl2009causal} with $s_m$ edges, then $s^0 \leq s_m$. 
\revised{
However, while in many applications $s_m \asymp s^0$, this is not always guaranteed. Thus, to ensure consistent estimation when replacing $s^0$ with $s_m$ and setting $c^{\star} =1$ in practice, we use the more conservative threshold of $\lambda s^0 \asymp s^{0} \log(m)/n$. With this choice the first and third terms in \eqref{eqn:newbnd4} would be of the same (vanishing) order, and the consistency rate would be driven by the convergence rate of $\| \beta^{\star} - \beta^{0} \|_2^2$. We investigate the performance of this choice in Section~\ref{sec:compearly}.}

The above results, including the specific choice of early stopping criterion, are also valid if the super-structure $\mathcal{M}$ corresponding to the moral graph is not known \emph{a priori}. That is because the moral graph can be consistently estimated from data using recent developments in graphical modeling; see \citet{drton2017structure} for a review of the literature. While  some of the existing algorithms based on $\ell_1$-penalty require an additional \emph{irrepresentability} condition \citep{meinshausen2006high, saegusa2016joint}, this assumption can be relaxed by using instead an adaptive lasso penalty or by thresholding the initial lasso estimates \citep{bulmann2011statistics}.

In light of Proposition \ref{EarlyProp}, it is of great interest to develop algorithms that converge to a solution with a small optimality gap expeditiously. To achieve this, one approach is to obtain better lower bounds using the branch-and-bound process from strong mathematical formulations for  \eqref{eq:PNLMform}. To this end, we next review existing formulations of  \eqref{eq:PNLMform}.

\section{Existing Formulations of DAG Learning with Linear SEMs} \label{Sec: Previous work}
In this section, we review known mathematical formulations for DAG learning with linear SEMs. We first outline the necessary notation below. \\ \\
\noindent \textbf{Index Sets}\\
$V = \{1,2,\dots,m\}$:  index set of random variables;\\
$\mathcal{D}= \{1,2,\dots,n\}$: index set of samples. \vspace{0.1in}\\
\noindent \textbf{Input} \\
$\mathcal{M}=(V,E)$: an undirected super-structure graph (e.g., the moral graph);\\
$\overrightarrow{\mathcal{M}}=(V,\overrightarrow{E})$: the bi-directional graph corresponding to the undirected graph $\mathcal{M}$; \\
$\mathcal{X} = (\mathcal{X}_1, \dots, \mathcal{X}_m)$, where $\mathcal{X}_v = (x_{1v}, x_{2v}, \dots, x_{nv})^{\top}$ and $x_{dv}$ denotes $d$th sample ($d \in \mathcal{D}$) of random variable $X_v$; note $\mathcal{X} \in \mathbb{R}^{n \times m}$; \\
$\lambda_n:$ tuning parameter (penalty coefficient for $\ell_0$ regularization).\\

\noindent \textbf{Continuous optimization variables} \\
$\beta_{jk}$: weight of arc $(j, k)$ representing the regression coefficients $\forall (j,k) \in \overrightarrow{E}$.\\

\noindent\textbf{Binary optimization variables} \\
$z_{jk}=1 \, \, \text{if arc} \, \,  (j, k) \, \text{exists in a DAG}; \text{otherwise} \, 0, \, \forall (j,k) \in \overrightarrow{E}$, \\
$g_{jk}=1 \, \, \text{if} \, \, \beta_{jk} \neq 0; \, \text{otherwise}  \, \, 0, \, \forall (j,k) \in \overrightarrow{E}$. 

Let $F(\beta, g)= \sum_{k\in V}\sum_{d\in \mathcal{D}} \Big(x_{dk}-\sum_{(j,k) \in \overrightarrow{E}} \beta_{jk}x_{dj}\Big)^2 + \lambda_n\sum_{(j,k) \in \overrightarrow{E}}  g_{jk}$.
The PNL$\mathcal{M}$ problem can be cast as the following optimization model:  
\begin{subequations}
	\begin{alignat}{3}
	\label{CP-obj} \quad  \quad   \underset{ \revised{B \in {\mathbb R}^{m\times m}, g\in \{0,1\}^{|\overrightarrow{E}|}}}{\min}\quad & \, F(\beta, g), \\
		&\label{CP-con2}\mathcal{G}(B) \, \, \text{induces a DAG from} \, {\overrightarrow{\mathcal{M}}},  \\
	& \label{CP-con1} \beta_{jk}(1-g_{jk})=0,  \quad && \forall (j,k) \in \overrightarrow{E,}\\
	&\label{CP-con3}  g_{jk} \in \{0,1\},\quad && \forall (j,k) \in \overrightarrow{E}. 
	\end{alignat} \label{MIQP1}
\end{subequations}

\noindent The objective function \eqref{CP-obj} is an expanded version of $\mathcal L(\beta)$ in PNL$\mathcal{M}$, where we use the indicator variable $g_{jk}$ to encode the $\ell_0$ regularization. The constraints in \eqref{CP-con2} rule out cycles. The constraints in \eqref{CP-con1} are non-linear and stipulate that $\beta_{jk} \neq 0$ only if $g_{jk}=1$.

There are two sources of difficulty in solving \eqref{CP-obj}-\eqref{CP-con3}: (i) the acyclic nature of DAG imposed by the combinatorial constraints in \eqref{CP-con2}; (ii) the set of \textit{nonlinear} constraints in \eqref{CP-con1}, which stipulates that $\beta_{jk} \neq 0$ only if there exists an arc $(j,k)$ in $\mathcal{G}(B)$.\ In Section \ref{lit1}, we discuss related studies to address the former, whereas in Section \ref{lit2} we present relevant literature for the latter. 

\subsection{Linear encodings of the acyclicity constraints \eqref{CP-con2}} \label{lit1}
There are several ways to ensure that the estimated graph does not contain any cycles. The first approach is to add a constraint for each cycle in the graph, so that at least one arc in this cycle must not exist in $\mathcal G(B)$.\ A \textit{cutting plane} (CP) method is used to solve such a formulation which may require generating an exponential number of constraints. 
Another way to rule out cycles is by imposing constraints such that the nodes follow a topological order \citep{park2017bayesian}. A topological ordering is a unique ordering of the nodes of a graph from 1 to $m$ such that the graph contains an arc $(j,k)$ if node $j$ appears before node $k$ in the order.  We refer to this formulation as \textit{topological ordering} (TO). \revised{The TO formulation has $\mathcal{O}(m^2)$ variables and $\mathcal{O}(|\overrightarrow{E}|)$ constraints.}  We give these formulations in the Appendix, for completeness. 

The \textit{layered network} (LN) formulation  \revised{for learning from continuous data} proposed by \cite{manzour2019integer} \revised{is shown to perform better, empirically, than} the TO formulation \revised{because it reduces the number of binary variables and is proven to obtain the same continuous relaxation bounds. Therefore, smaller quadratic programs are solved that provide the same relaxation bounds as larger quadratic programs}. \revised{This formulation is closely related to the generation number approach proposed in \cite{cussens2010maximum} for discrete data.} \revised{In this paper, we focus on the LN formulation and refer the reader to the Appendix and  \cite{manzour2019integer} for comparisons of these formulations \revised{and their sizes} in detail. Next, we give the LN encoding of the acyclicity constraints.}
 Define decision variables $z_{jk} \in \{0,1\}$ for all $(j,k) \in \overrightarrow{E}$, where the variable $z_{jk}$ takes value 1 if there is an arc $(j,k)$ in the network
\begin{subequations}\label{LN}
	\begin{alignat}{3} 
		\label{LN-con3} \textbf{LN} \quad  & g_{jk}  \leq z_{jk} \quad&& \forall (j,k) \in \overrightarrow{E}, \\
	\label{LN-con4} & z_{jk} - (m-1) z_{kj} \leq  \psi_k - \psi_j \quad &&\forall (j,k) \in \overrightarrow{E}.
	\end{alignat} % \label{eq:LN}
\end{subequations}
 Let $\psi_k$ be the \textit{layer value} for node $k$. The set of constraints in \eqref{LN-con4} ensures that if the layer of node $j$ appears before that of node $k$ (i.e., there is a direct path from node $j$ to node $k$), then $\psi_k \geq \psi_j + 1$. This rules out any cycles. 

Constraint \eqref{LN-con4} \revised{written for $(k,i)\in  \overrightarrow{E}$}  imposes that if $z_{ij} = 1$ and $z_{jk} = 1$, \revised{i.e., if $\psi_k>\psi_i$}, then $z_{ik} = 1$\revised{, even if $\beta_{ik}=g_{ik}=0$ in an optimal solution}. Thus, additional binary vector $z$ along with the set of constraints in \eqref{LN-con3} is needed to correctly encode the $\ell_0$ regularization. \revised{The LN formulation has $\mathcal{O}(|\overrightarrow{E}|)$ variables and constraints. Note that $|\overrightarrow{E}|$ is much smaller than $m^2$ for sparse skeleton/moral graphs.}

\subsection{Linear encodings of the non-convex constraints \eqref{CP-con1}} \label{lit2}
The nonconvexity of the set of constraints in \eqref{CP-con1} causes challenges in obtaining provably optimal solutions with existing optimization software. Therefore, we consider convex representations of this set of constraints. 
First, we present a linear encoding of the constraints in  \eqref{CP-con1}. Although the existing \revised{compact (i.e., polynomial sized) TO and LN} formulations discussed in Section \ref{lit1} differ in their approach to ruling out cycles, one  commonality among them is that they replace the non-linear constraint \eqref{CP-con1} by so called \emph{big-$M$ constraints} given by
\begin{equation}\label{eq:bigM}
-M g_{jk} \leq \beta_{jk} \leq M g_{jk}, \forall (j,k) \in \overrightarrow{E}, 
\end{equation}
for a large enough $M$. Unfortunately, these big-$M$ constraints \eqref{eq:bigM} are poor approximations of \eqref{CP-con1}, especially in this problem, because no natural and tight value for $M$ exists. Although a few techniques have been proposed for obtaining the big-$M$ parameter for sparse regression problem \cite[e.g.,][]{bertsimas2016best,bertsimas2017sparse,gomez2018mixed,Park2020}, the resulting parameters are often too large in practice. Further, finding a tight big-$M$ parameter itself is a difficult problem to solve for DAG structure learning.

Consider \eqref{CP-obj}-\eqref{CP-con3} by \revised{replacing}  \eqref{CP-con1} \revised{with} the linear big-$M$ constraints \eqref{eq:bigM} and writing the objective function in a matrix form. We denote the resulting formulation, which has a convex quadratic objective and linear constraints, by the following {MIQP}.
\begin{subequations}\label{eq:LNform}
	\begin{alignat}{3}
	\quad  \label{L-obj} \textbf{MIQP}\quad \underset{\revised{B \in {\mathbb R}^{m\times m}, g\in \{0,1\}^{|\overrightarrow{E}|}}}{\min} & \quad \text{tr}\left[(I- B)(I-B)^{\top}\mathcal{X}^{\top}\mathcal{X}\right] + \lambda_n \sum_{(j,k) \in \overrightarrow{E}}  g_{jk}\\
    &  \eqref{CP-con2}, \eqref{eq:bigM}  \label{LN-con1} \\
	& \label{LN-con5} g_{jk} \in \{0,1\}  \quad \forall (j,k) \in \overrightarrow{E}. 
	\end{alignat}
\end{subequations}  

Depending on which types of constraints are used in lieu of \eqref{CP-con2}, as explained in Section \ref{lit1}, {MIQP} \eqref{eq:LNform} results in three different formulations: {MIQP+CP}, which uses \eqref{CE},  {MIQP+TO}, which uses \eqref{TO}, and {MIQP+LN},  which uses \eqref{LN}, respectively.

To discuss the challenges of the big-$M$ approach, we give a definition followed by two propositions.   

\begin{definition}\label{def:2}
	A formulation $A$ is said to be \emph{stronger} than formulation $B$ if $\mathcal{R}(A) \subset \mathcal{R} (B)$ where $\mathcal{R}(A)$ and $\mathcal{R}(B)$ correspond to the feasible regions of continuous relaxations of $A$ and $B$, respectively. 
\end{definition}

\begin{prop}{(Proposition 3 in \cite{manzour2019integer})} 
	{\it The {MIQP+TO} and {MIQP+CP} formulations are stronger than the {MIQP+LN} formulation.} \label{Prop:strong}
\end{prop}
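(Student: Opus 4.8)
The plan is to follow the standard two-step template for comparing the strength of integer-programming formulations in the sense of Definition~\ref{def:2}: first establish the inclusions $\mathcal{R}(\text{MIQP+TO}) \subseteq \mathcal{R}(\text{MIQP+LN})$ and $\mathcal{R}(\text{MIQP+CP}) \subseteq \mathcal{R}(\text{MIQP+LN})$, and then show that at least one of these inclusions is strict. Since the three formulations carry different auxiliary variables (the ordering variables $o$ in TO, the layer potentials $\psi$ in LN, and none in CP), the comparison is carried out in the space of the variables shared across the formulations, projecting out the formulation-specific variables; note that the objective \eqref{L-obj} and the big-$M$ constraints \eqref{eq:bigM} are common to all three, so only the encodings of acyclicity \eqref{TO}, \eqref{LN}, \eqref{CE} need to be compared.

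For the inclusion $\mathcal{R}(\text{MIQP+TO}) \subseteq \mathcal{R}(\text{MIQP+LN})$, I would take an arbitrary point of the continuous relaxation of TO, keep $(\beta,g,z)$ unchanged, and define a candidate layering by the induced ordering value $\psi_k := \sum_{s \in V} s\, o_{ks}$. Because the relaxed assignment constraints \eqref{TO-con5}--\eqref{TO-con6} force $o$ to be doubly stochastic, the resulting vector $\psi$ is a convex combination of permutations of $(1,\dots,m)$, hence majorized by $(1,\dots,m)$ and confined to $[1,m]^m$. I would then verify that the TO constraint \eqref{TO-con4}, together with this structural restriction on $\psi$, implies the LN constraint \eqref{LN-con4}; the shared constraint \eqref{TO-con3} directly supplies \eqref{LN-con3}. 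For $\mathcal{R}(\text{MIQP+CP}) \subseteq \mathcal{R}(\text{MIQP+LN})$, the argument is different because CP has no ordering variables: here I would use that a point satisfying \emph{all} cycle inequalities \eqref{CE} admits a consistent topological potential. Concretely, I would complete the CP point by setting $z \ge g$ and invoke the standard fact that the difference-constraint system defining \eqref{LN-con4} is feasible (equivalently, the associated longest-path/LP is bounded) precisely when there is no offending directed cycle, which is exactly what \eqref{CE} rules out; this yields an admissible $\psi$ and places the point in $\mathcal{R}(\text{MIQP+LN})$.

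For strictness, I would exhibit a small instance — a triangle super-structure on $m=3$ nodes — and a single fractional point that lies in $\mathcal{R}(\text{MIQP+LN})$ but is excluded from $\mathcal{R}(\text{MIQP+TO})$ and $\mathcal{R}(\text{MIQP+CP})$. A symmetric cyclic assignment of the form $g_{12}=g_{23}=g_{31}=a$ with the remaining arc weights set to $0$ is a natural candidate: for suitable $a$ the cycle inequality $g_{12}+g_{23}+g_{31}\le 2$ from \eqref{CE} is violated, so the point is outside $\mathcal{R}(\text{MIQP+CP})$, while one checks that the bare potential constraints \eqref{LN-con4} (with a constant layering) still admit it, so it remains in $\mathcal{R}(\text{MIQP+LN})$. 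An analogous fractional cyclic pattern in the $z$ variables, incompatible with any layering induced by a doubly-stochastic $o$, separates LN from TO. Producing one such point for each formulation certifies the strict inclusions and completes the proof.

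The main obstacle is the containment direction, and specifically reconciling the coefficient $m$ appearing in \eqref{TO-con4} with the coefficient $m-1$ in \eqref{LN-con4}. Naively identifying the layers with the ordering values does not by itself deliver \eqref{LN-con4}, since the two inequalities differ by a term proportional to the reverse-arc variable; the argument must therefore genuinely exploit the doubly-stochastic (equivalently, majorization) structure enforced by \eqref{TO-con5}--\eqref{TO-con6} for TO, and the \emph{entire} exponential family of cycle inequalities \eqref{CE} for CP, rather than any single constraint. I would also take care to state precisely the shared-variable space in which containment is asserted and to use the \emph{complete} relaxation of each formulation (including any ordering/tournament constraints), since the acyclicity encodings are exactly what make the projected relaxations differ.
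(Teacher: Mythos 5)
First, a point of comparison: the paper does not prove Proposition~\ref{Prop:strong} at all --- it is imported verbatim from Proposition~3 of \cite{manzour2019integer}, so there is no in-paper proof to measure yours against, and the formulations \eqref{TO} and \eqref{LN} are reproduced here only in abbreviated form (no domains for $\psi$, no explicit coupling such as $z_{jk}+z_{kj}\le 1$). Your overall template --- containment of the continuous relaxations in a shared variable space plus an explicit strictness certificate --- is the right shape, and your separating point for CP versus LN (a fractional directed triangle $g_{12}=g_{23}=g_{31}=a$ with $a>2/3$, admitted by LN via a constant layering) is correct.

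Both containment arguments, however, have genuine gaps. For TO $\subseteq$ LN you correctly identify the obstacle --- with $\psi_k:=\sum_{s} s\,o_{ks}$, constraint \eqref{TO-con4} yields $z_{jk}-m z_{kj}\le\psi_k-\psi_j$, which is \emph{weaker} than the required $z_{jk}-(m-1)z_{kj}\le\psi_k-\psi_j$ by the nonnegative term $z_{kj}$ --- but you never close it; ``genuinely exploit the doubly-stochastic structure'' is a placeholder, not an argument. Indeed, with the constraints exactly as written here the gap cannot be closed in the shared $(\beta,g,z)$-space: take $m=3$, $o_{rs}\equiv 1/3$ (so all the induced $\psi_k$ coincide), $z_{12}=z_{23}=z_{31}=3/4$, $z_{21}=z_{32}=z_{13}=1/4$, and any $g\le z$; this satisfies \eqref{TO-con3}--\eqref{TO-con6}, yet summing \eqref{LN-con4} over the forward triangle forces $3\left(3/4-2\cdot 1/4\right)=3/4\le 0$, so no layering $\psi$ exists for this $z$. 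A correct proof must therefore either invoke formulation details from \cite{manzour2019integer} that are absent from this paper, or work in the $(\beta,g)$-projection --- where taking $z\equiv 1$ and $\psi$ constant makes \emph{every} $g\in[0,1]^{|\overrightarrow{E}|}$ LN-feasible, so the containments become trivial but your TO strictness certificate (a ``fractional cyclic pattern in the $z$ variables'') no longer lives in the comparison space. For CP $\subseteq$ LN, your justification rests on the claim that satisfying all inequalities \eqref{CE} means there is ``no offending directed cycle''; that equivalence is false for fractional points (the $2/3$-triangle satisfies \eqref{CE} while its support contains a directed cycle), so feasibility of the difference-constraint system for \eqref{LN-con4} does not follow from the reasoning you give and must instead be established by an explicit, verified choice of $z\ge g$.
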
 

\revised{As a consequence of Definition \ref{def:2}, the optimal objective function value of  the continuous relaxation of the  stronger formulation  provides a lower bound on the true optimal objective function of the MIQP that is greater than or equal to the optimal objective function value of the continuous relaxation of the weaker formulation due to the smaller set of feasible solutions. However, next proposition shows that, perhaps surprisingly, the continuous relaxations of {MIQP+TO} and {MIQP+CP} formulations, while stronger according to Definition \ref{def:2}, give the same optimal objective function value (and the same lower bound on the true optimal objective).}

\begin{prop}{(Proposition 5 in \cite{manzour2019integer}) \label{Prop2}}	
	\it{Let $\beta^{\star}_{jk}$ denote the optimal coefficient associated with an arc $(j,k) \in \overrightarrow{E}$ from problem \eqref{eq:PNLMform}.\ For the same variable branching in the branch-and-bound process, the continuous relaxations of the {MIQP+LN} formulation for $\ell_0$ \revised{regularization} attain the same optimal objective function value as {MIQP+TO} and {MIQP+CP}, if $M \geq 2  \underset{(j,k) \in \overrightarrow{E}}{\max} \,  |\beta^{\star}_{jk}|$.} \label{Prop5: BB}
\end{prop}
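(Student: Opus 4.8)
The plan is to exploit the fact that all three formulations share the same objective \eqref{L-obj} and the same big-$M$ coupling constraints \eqref{eq:bigM}, differing only in how they encode acyclicity. Writing $v_{\mathrm{LN}}, v_{\mathrm{TO}}, v_{\mathrm{CP}}$ for the optimal objective values of the three continuous relaxations at a fixed branch-and-bound node, Proposition~\ref{Prop:strong} immediately gives $v_{\mathrm{TO}} \ge v_{\mathrm{LN}}$ and $v_{\mathrm{CP}} \ge v_{\mathrm{LN}}$, since the stronger formulations optimize over smaller feasible regions. The whole task therefore reduces to proving the reverse inequalities, i.e.\ that the LN relaxation optimum can be matched by feasible points of the TO and CP relaxations.

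First I would show that, in the continuous relaxation, the LN and TO acyclicity constraints are never binding. Concretely, for any $(\beta,g)$ satisfying \eqref{eq:bigM} with $g\in[0,1]$, I would exhibit auxiliary variables completing it to a feasible point of each formulation: taking $z_{jk}=1$ for all arcs and $\psi$ constant makes \eqref{LN-con4} read $2-m \le 0$, which holds for $m\ge 2$, while taking $z=1$ together with the doubly stochastic choice $o_{rs}=1/m$ makes the right-hand side of \eqref{TO-con4} vanish and the left-hand side equal $1-m\le 0$. Since the objective depends only on $(\beta,g)$, this shows that the projections of $\mathcal{R}(\mathrm{TO})$ and $\mathcal{R}(\mathrm{LN})$ onto $(\beta,g)$-space both coincide with the big-$M$ feasible set, so that $v_{\mathrm{TO}} = v_{\mathrm{LN}}$ with no condition on $M$.

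The substantive part is the comparison with CP, and this is where the hypothesis $M \ge 2\max_{(j,k)}|\beta^{\star}_{jk}|$ enters. Let $(\beta^{\mathrm{LN}},g^{\mathrm{LN}})$ be an optimal LN-relaxation solution; since $\lambda_n>0$ drives each free indicator to its lower bound, $g^{\mathrm{LN}}_{jk} = |\beta^{\mathrm{LN}}_{jk}|/M$. I would then argue that the relaxation optimum satisfies $|\beta^{\mathrm{LN}}_{jk}| \le \max_{(j,k)}|\beta^{\star}_{jk}| \le M/2$, so that $g^{\mathrm{LN}}_{jk}\le 1/2$ for every arc. Granting this, every directed cycle $\mathcal{C}_A$ has length $|\mathcal{C}_A|\ge 2$, hence $\sum_{(j,k)\in\mathcal{C}_A} g^{\mathrm{LN}}_{jk} \le |\mathcal{C}_A|/2 \le |\mathcal{C}_A|-1$, so $(\beta^{\mathrm{LN}},g^{\mathrm{LN}})$ satisfies all cutting-plane constraints \eqref{CE} and is CP-feasible. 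This yields $v_{\mathrm{CP}} \le v_{\mathrm{LN}}$, and combined with the reverse inequality gives $v_{\mathrm{CP}} = v_{\mathrm{LN}} = v_{\mathrm{TO}}$.

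I expect the main obstacle to be the coefficient bound $|\beta^{\mathrm{LN}}_{jk}| \le M/2$ on the relaxation optimum; everything else is bookkeeping. The clean sufficient condition ``$g\le 1/2$ on every arc implies each cycle constraint holds'' isolates exactly why the factor of two in $M \ge 2\max_{(j,k)}|\beta^{\star}_{jk}|$ is the right threshold, but the real work is to certify that the relaxed coefficients never exceed $M/2$, which must be established by relating the LN-relaxation optimum to $\beta^{\star}$ rather than through any property of the cycle constraints themselves. I would also verify the edge case of branch-and-bound nodes at which some $g_{jk}$ are fixed to $1$: there the $g\le 1/2$ argument no longer applies directly, so I would confirm that either a cycle of such fixed arcs renders the node infeasible for all three formulations simultaneously, or the remaining free arcs still admit the same feasibility certificate.
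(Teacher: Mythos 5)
The paper never proves this proposition; it is imported verbatim from \cite{manzour2019integer}, so there is no in-paper proof to match against. The closest argument the paper does give is the proof of Proposition~\ref{L1}, which uses exactly the mechanism you propose for the CP comparison: at the relaxation optimum $g_{jk}=|\beta_{jk}|/M\le 1/2$, hence every cycle constraint $\sum_{(j,k)\in\mathcal{C}_A}g_{jk}\le|\mathcal{C}_A|-1$ holds automatically. Your handling of LN and TO (exhibiting $z\equiv 1$, constant $\psi$, and the doubly stochastic $o_{rs}=1/m$ to show the acyclicity constraints are slack, so all three relaxations project onto the same big-$M$ box in $(\beta,g)$-space) is also correct bookkeeping. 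So the skeleton is the right one.

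The genuine gap is the step you yourself flag: $|\beta^{\mathrm{LN}}_{jk}|\le\max_{(j,k)}|\beta^{\star}_{jk}|$. Nothing in your argument supplies this, and it is not obviously true: by Proposition~\ref{L1} the LN relaxation optimum solves an $\ell_1$-penalized least-squares problem with no acyclicity restriction, whereas $\beta^{\star}$ is the $\ell_0$-penalized optimum over DAGs, and there is no a priori domination of the former's sup-norm by the latter's. The paper is careful elsewhere to state the analogous hypothesis in terms of the relaxation optimum $\beta^{R}$ (Proposition~\ref{L1}, and the experimental choice $M=2\max|\beta^{R}_{jk}|$) precisely because $\beta^{\star}$ and $\beta^{R}$ are different objects. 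Since the bound $|\beta^{\mathrm{LN}}_{jk}|\le M/2$ is the only place the stated hypothesis $M\ge 2\max|\beta^{\star}_{jk}|$ enters, leaving it unestablished leaves the CP-feasibility of the LN optimum, and hence $v_{\mathrm{CP}}\le v_{\mathrm{LN}}$, unproved. The deferred branching case is also not innocuous: at a node where every $g_{jk}$ on some directed cycle is fixed to one, the CP relaxation is infeasible while your $z\equiv 1$, constant-$\psi$ certificate still makes the LN relaxation feasible, so equality of the node relaxation values needs a separate argument (or an appeal to how \cite{manzour2019integer} restricts the admissible branchings) rather than the blanket claim that such nodes are infeasible for all three formulations.
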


Proposition \ref{Prop:strong} implies that the  {MIQP+TO} and {MIQP+CP}   formulations are stronger than the {MIQP+LN} formulation. Nonetheless, Proposition \ref{Prop5: BB} establishes that for sufficiently large values of $M$, stronger formulations \revised{for the DAG learning problem} attain the same continuous relaxation objective function value as the weaker formulation throughout the branch-and-bound tree.  The optimal solution to the continuous relaxation of MIQP formulations of DAG structure learning may not be at an extreme point of the convex hull of feasible points. Hence, stronger formulations do not necessarily ensure better lower bounds \revised{for certain formulations of this problem involving the \emph{nonlinear} objective}. This is in contrast to a mixed-integer program (MIP) with \emph{linear} objective, whose continuous relaxation is a linear program (LP). In that case, there exists an optimal solution that is an extreme point of the corresponding feasible set.  As a result, a better lower bound can be obtained from a stronger formulation that better approximates the convex hull of \revised{the set of solutions to} a mixed-integer linear program; this generally leads to faster convergence. A prime example is the traveling salesman problem (TSP), for which stronger formulations attain better computational performance \citep{oncan2009comparative}. In contrast, the numerical results by \cite{manzour2019integer} \revised{empirically} show that {MIQP+LN} has better computational performance because it is a compact formulation with the fewest constraints and the same continuous relaxation bounds.

Our next result, which is adapted from \cite{dong2015regularization} to the DAG structure learning problem, shows that the continuous relaxation of {MIQP} is equivalent to  the optimal solution to the problem where the $\ell_0$-regularization term is replaced with an $\ell_1$-regularization term (i.e., $\|\beta\|_1=\sum_{(j,k) \in \overrightarrow{E}}|\beta_{jk}|$) with a particular choice of the $\ell_1$ penalty. This motivates us to consider tighter continuous relaxation for MIQP. 
Let $(\beta^R, g^R)$ be an optimal solution to the continuous relaxation of {MIQP}.

\begin{prop}\label{prop:Prop3}
	For $M \geq 2 \underset{(j,k) \in \overrightarrow{E}}{\max} \, |\beta^R_{jk}|$, a continuous relaxation of {MIQP} \revised{\eqref{eq:LNform}}, where the binary variables are relaxed, is equivalent to the problem where the $\ell_0$ regularization term is replaced with an $\ell_1$-regularization term with penalty parameter $\tilde{\lambda}=\frac{\lambda_n}{M}$. \label{L1}
\end{prop} 
\begin{proof} 
For $M \geq 2 \underset{(j,k) \in \overrightarrow{E}}{\max} \, |\beta^R_{jk}|$, the value $g^R_{jk}$ is $\frac{\beta^R_{jk}}{M}$ in an optimal solution to the continuous relaxation of {MIQP} \eqref{eq:LNform}. Otherwise, we can reduce the value of the decision variable $g^R$ without violating any constraints while reducing the objective function. Note that since $M \geq 2 \underset{(j,k) \in \overrightarrow{E}}{\max} \, |\beta^R_{jk}|$, we have $\frac{\beta_{jk}^R}{M} \leq 1, \, \forall (j,k) \in \overrightarrow{E}$. To show that the set of constraints in \eqref{CP-con2} is satisfied, we consider the set of CP constraints. In this case, the set of constraints \eqref{CP-con2} holds, i.e., $\sum_{(j,k ) \in \, \mathcal{C}_A} \frac{\beta^{R}_{jk}}{M} \leq |\mathcal{C}_A|-1, \quad  \forall \mathcal{C}_A \in \mathcal{C}$, because $M \geq 2 \underset{(j,k) \in \overrightarrow{E}}{\max} \, |\beta^R_{jk}|$.  
This implies that $g_{jk}^R=\frac{\beta_{jk}^R}{M}$ is the optimal solution. Thus, the objective function reduces to $\ell_1$ regularization with the coefficient $\frac{\lambda_n}{M}$. 

Finally, Proposition \ref{Prop2} establishes that for $M \geq 2 \underset{(j,k) \in \overrightarrow{E}}{\max} \, |\beta^\star_{jk}|$, the objective function value of the continuous relaxations of {MIQP+CP}, {MIQP+LN} and {MIQP+TO} are equivalent. This implies that the continuous relaxations of all formulations are equivalent, which completes the proof. 
\end{proof}

Despite the promising performance of {MIQP+LN}, its continuous relaxation objective function value provides a weak lower bound due to the big-$M$ constraints. To circumvent this issue, a natural strategy is to improve the big-$M$ value. Nonetheless, existing methods which ensure a valid big-$M$ value or heuristic techniques \citep{park2017bayesian,gomez2018mixed} do not lead to tight big-$M$ values. For instance,  the heuristic technique by \cite{park2017bayesian} to obtain big-$M$ values always satisfies the condition in Proposition \revised{\ref{prop:Prop3}} and exact techniques are expected to produce even larger big-$M$ values. Therefore, we  directly develop tighter approximations for \eqref{CP-con1} next.

\section{New Perspectives for  Mathematical Formulations of DAG Learning} \label{Sec: Math models}

In this section, we discuss improved mathematical formulations for learning DAG structure of a BN based on convex (instead of linear) encodings of the constraints in \eqref{CP-con1}. 

Problem \eqref{MIQP1} is an MIQP with non-convex complementarity constraints \eqref{CP-con1}, a class of problems which has received a fair amount of attention from the operations research community over the last decade \citep{frangioni2006perspective, frangioni2007sdp, frangioni2009computational, frangioni2011projected, gomez2018mixed, Liu2021, Wei2021, Wei2022}. There has also been recent interest in leveraging these developments to solve sparse regression problems with $\ell_0$ regularization \citep{pilanci2015sparse, dong2015regularization, xie2018ccp, atamturk2019rank,wei2019convexification}. 

Next, we review applications of MIQPs with complementarity constraints of the form \eqref{CP-con1} for solving sparse regression with $\ell_0$ regularization. \cite{frangioni2011projected} develop a so-called projected perspective relaxation method, to solve the perspective relaxation of mixed-integer nonlinear programming problems with a convex objective function and complementarity constraints. This reformulation requires that the corresponding binary variables are not involved in other constraints. Therefore, it is suitable for $\ell_0$ sparse regression, but cannot be applied for DAG structure learning. \cite{pilanci2015sparse} show how a broad class of $\ell_0$-regularized problems, including sparse regression as a special case, can be formulated exactly as optimization problems. The authors use the Tikhonov regularization term $\mu\|\beta\|_2^2$ and convex analysis to construct an improved convex relaxation using the reverse Huber penalty. In a similar vein, \cite{bertsimas2017sparse} exploit the Tikhonov regularization and develop an efficient algorithm by reformulating the sparse regression mathematical formulation as a saddle-point optimization problem with an outer linear integer optimization problem and an inner dual quadratic optimization problem which is capable of solving high-dimensional sparse regressions. \cite{xie2018ccp} apply the perspective formulation of sparse regression optimization problem with both $\ell_0$  and the Tikhonov regularizations. 
The authors establish that the continuous relaxation of the perspective formulation is equivalent to the continuous relaxation of the formulation given by \cite{bertsimas2017sparse}.
\cite{dong2015regularization} propose perspective relaxation for $\ell_0$ sparse regression optimization formulation and establish that the popular sparsity-inducing concave penalty function known as the minimax concave penalty \citep{zhang2010nearly} and the reverse Huber penalty \citep{pilanci2015sparse} can be obtained as special cases of the perspective relaxation -- thus the relaxations of formulations by \cite{zhang2010nearly,pilanci2015sparse, bertsimas2017sparse, xie2018ccp} are equivalent. The authors  obtain an optimal perspective relaxation that is no weaker than any perspective relaxation. Among the related approaches, the optimal perspective relaxation by \cite{dong2015regularization} is the only one that does not explicitly require the use of  Tikhonov regularization. 

The perspective formulation, which in essence is a fractional non-linear program, can be cast either as a mixed-integer second-order cone program (MISOCP) or a  semi-infinite mixed-integer  linear program (SIMILP). 
Both formulations can  be solved directly by state-of-the-art optimization packages.  
\cite{dong2015regularization} and \cite{atamturk2019rank} solve the continuous relaxations and then use a heuristic approach (e.g., rounding techniques) to obtain a feasible solution (an upper bound). 
In this paper, we directly solve the MISOCP and SIMILP formulations for learning sparse DAG structures.

Next, we present how perspective formulation can be suitably applied for DAG structure learning with $\ell_0$ regularization. We further cast the problem as MISOCP and SIMILP. 
 To this end, we express the objective function \eqref{L-obj} in the following way: 
	\begin{alignat}{3}
	\notag \quad  & \text{tr}[(I- B)(I-B)^{\top}\mathcal{X}^{\top}\mathcal{X}] + \lambda_n \sum_{(j,k) \in \overrightarrow{E}}  g_{jk}\\
		\label{PR-obj4} \quad  & = \text{tr}[(I- B-B^\top)\mathcal{X}^{\top}\mathcal{X} + 2BB^\top \mathcal{X}^{\top}\mathcal{X}\revised{]} + \lambda_n \sum_{(j,k) \in \overrightarrow{E}}  g_{jk}.
	\end{alignat}

\noindent Let $\delta \in \mathbb{R}_{+}^{m}$ be a vector such that $\mathcal{X}^\top\mathcal{X} -{D}_\delta \succeq 0$, where $D_\delta=\text{diag}(\delta_1, \dots, \delta_m)$ and $A \succeq 0$ means that matrix $A$ is positive semi-definite.  By splitting the quadratic term $\mathcal{X}^{\top}\mathcal{X}= (\mathcal{X}^{\top}\mathcal{X} -D_\delta)+D_\delta$ in \eqref{PR-obj4}, the objective function can be expressed as 
\begin{equation}\label{eq:LNform1}
 \text{tr}\left[(I- B-B^\top)\mathcal{X}^{\top}\mathcal{X} + BB^\top(\mathcal{X}^{\top}\mathcal{X}  - D_\delta)\right] + \text{tr}\left(B B^\top D_\delta\right) +\lambda_n \sum_{(j,k) \in \overrightarrow{E}}  g_{jk}.
\end{equation}
Let $Q= \mathcal{X}^{\top}\mathcal{X} - D_{\delta}$. (In the presence of Tikhonov regularization with tuning parameter $\mu> 0$, we let $Q= \mathcal{X}^{\top}\mathcal{X} + \mu I- D_{\delta}$ as described in Remark~\ref{rem:L2}.) Then, Cholesky decomposition can be applied to decompose $Q$ as $q^{\top}q$ (note $Q \succeq 0$). As a result, $\text{tr}\left(B B^\top Q\right) = \text{tr}\left(B B^\top {q^\top} {q}\right) = \sum_{i=1}^{m} \sum_{j=1}^{m} \left(\sum_{(\ell,j) \in \overrightarrow{E}} \beta_{\ell j}q_{i\ell}\right)^2$. The separable component can also be expressed as $\text{tr}\left(B B^\top D_\delta\right) = \sum_{j=1}^{m} \sum_{(j,k) \in \overrightarrow{E}} \delta_j\beta_{jk}^2$. Using this notation, the objective \eqref{eq:LNform1} can be written as  
\begin{equation}
	 \nonumber  \label{Obj} \text{tr}\left[(I- B-B^\top)\mathcal{X}^{\top}\mathcal{X} + BB^\top Q\right] +\sum_{j=1}^{m} \sum_{(j,k) \in \overrightarrow{E}} \delta_j\beta_{jk}^2 + \lambda_n\sum_{(j,k) \in \overrightarrow{E}}  g_{jk}.
\end{equation} 

\noindent The Perspective Reformulation (PRef) of {MIQP} is then given by
\begin{subequations}\label{eq:PR}
	\begin{alignat}{3}
	\textbf{PRef}  \label{PR-Obj} \quad \underset{\revised{B \in {\mathbb R}^{m\times m}, g\in \{0,1\}^{|\overrightarrow{E}|}}}{\min} \quad &\text{tr}\big[(I- B-B^\top)\mathcal{X}^{\top}\mathcal{X} + BB^\top Q\big] + \\ & \nonumber \sum_{j=1}^{m} \sum_{(j,k) \in \overrightarrow{E}} \delta_j \frac{\beta_{jk}^2}{g_{jk}} + \lambda_n\sum_{(j,k) \in \overrightarrow{E}}  g_{jk},\\
	&  	 \eqref{LN-con1}-\eqref{LN-con5}.
	\end{alignat}
\end{subequations} 
The objective function \eqref{PR-Obj} is formally undefined when some $g_{jk}$ = 0. More precisely, we use the convention that $\frac{\beta^2_{jk}}{g_{jk}}=0$ when $\beta_{jk} = g_{jk} = 0$ and $\frac{\beta^2_{jk}}{g_{jk}}=+\infty$ when $\beta_{jk} \neq 0$ and $g_{jk}=0$ \citep{frangioni2009computational}. The continuous relaxation of PRef, referred to as the perspective relaxation, is much stronger than the continuous relaxation of MIQP \revised{under certain conditions discussed in detail in Section \ref{deltavalue}}  \citep{pilanci2015sparse}. However, an issue with PRef is that the objective function is nonlinear due to the fractional term. 
There are two ways to reformulate PRef. One as a mixed-integer second-order conic program (MISOCP) (see, Section \ref{SOCP}) and the other as a   semi-infinite mixed-integer linear program (SIMILP) (see, Section \ref{SIP}).

\subsection{Mixed-integer second-order conic program} \label{SOCP}
Let $s_{jk}$ be additional variables representing $\beta_{jk}^2$. Then, the MISOCP formulation is given by  
\begin{subequations} \label{eq:misocp}
	\begin{alignat}{3}
	\textbf{MISOCP}\quad \underset{\revised{B \in {\mathbb R}^{m\times m}, s\in {\mathbb R}^{|\overrightarrow{E}|}, g\in \{0,1\}^{|\overrightarrow{E}|}}}{\min} \quad &\text{tr}\left[(I- B-B^\top)\mathcal{X}^{\top}\mathcal{X} + BB^\top Q\right] + \\ & \nonumber \sum_{j=1}^{m} \sum_{(j,k) \in \overrightarrow{E}} \delta_j s_{jk} + \lambda_n \sum_{(j,k) \in \overrightarrow{E}}  g_{jk},\\
	& \label{SOCP-C1} s_{jk}g_{jk} \geq \beta_{jk}^2 \quad (j,k) \in \overrightarrow{E},\\
	& \label{SOCP-C2} 0\le s_{jk} \leq M^2 g_{jk} \quad (j,k) \in \overrightarrow{E},\\
		&   \eqref{LN-con1}-\eqref{LN-con5}.
	\end{alignat}
\end{subequations} 
Here, the constraints in \eqref{SOCP-C1} imply that $\beta_{jk} \neq 0$ only when $g_{jk} = 1$. The constraints in \eqref{SOCP-C1} are second-order conic representable because they can be written in the form of $\sqrt{4\beta_{jk}^2+ (s_{jk}-g_{jk})^2}\leq s_{jk}+g_{jk}$. The set of constraints in \eqref{SOCP-C2} is valid since $\beta_{jk} \leq Mg_{jk}$ implies $\beta_{jk}^2 \leq M^2g^2_{jk}= M^2g^2_{jk}$ and $g_{jk}^2=g_{jk}$ for $g_{jk} \in \{0,1\}$. The set of constraints in \eqref{SOCP-C2} is not required, yet they improve the computational efficiency especially when we restrict the big-$M$ value. \cite{xie2018ccp} report similar behavior for sparse regression. 
 When we relax $g_{jk}\in \{0,1\}$ and let  $g_{jk}\in[0,1]$, we obtain the continuous relaxation of {MISOCP} \eqref{eq:misocp}. Let us denote the feasible region of continuous relaxation of {MISOCP} \eqref{eq:misocp} and {MIQP} \eqref{eq:LNform} by $\mathcal{R}$MISOCP and $\mathcal{R}$MIQP, and the objective function values by OFV($\mathcal{R}$MISOCP) and OFV($\mathcal{R}$MIQP), respectively. For a more general problem than ours, \cite{cui2013convex} give a detailed proof establishing that the feasible region of the former is contained in the feasible region of latter i.e., $\mathcal{R}$MISOCP $\subset \mathcal{R}MIQP$, and  OFV($\mathcal{R}$MISOCP) $ \ge $ OFV($\mathcal{R}$MIQP). Therefore, we are able to obtain stronger lower bounds using MISOCP than MIQP \revised{under suitable choices for $D_\delta$ as described  in Section \ref{deltavalue}}. 
 
\subsection{Mixed-integer semi-infinite integer linear program} \label{SIP}
An alternative approach to reformulate PRef is via \textit{perspective cuts} developed by \cite{frangioni2006perspective,frangioni2007sdp}. To apply perspective cuts, we use the reformulation idea first proposed in \cite{frangioni2006perspective} by introducing dummy decision matrix $D$ to distinguish the separable and non-separable part of the objective function; we also add the additional constraint $d = \beta$ where $d_{jk}$ is $(j,k)$ element of matrix $D$ and $\beta$ is the decision variable in the optimization problem. Following this approach,  {MIQP} can be reformulated as an SIMILP: 
\begin{subequations}
 	\begin{alignat}{3}
 	\textbf{SIMILP}\quad  \underset{\revised{B \in {\mathbb R}^{m\times m}, v\in {\mathbb R}^{|\overrightarrow{E}|}, g\in \{0,1\}^{|\overrightarrow{E}|}}}{\min} \quad &\text{tr}\left[(I- B-B^\top)\mathcal{X}^{\top}\mathcal{X} + DD^\top Q\right] + \\ & \nonumber \sum_{j=1}^{m} \sum_{(j,k) \in \overrightarrow{E}} \delta_j v_{jk} + \lambda_n \sum_{(j,k) \in \overrightarrow{E}}  g_{jk}, \\
 	& \label{SIP-C0} d_{jk} = {\beta}_{jk} \quad  (j,k) \in \overrightarrow{E}, \\
 	& \label{SIP-C1} v_{jk} \geq 2 \bar{\beta}_{jk}\beta_{jk} - \bar{\beta}_{jk}^2g_{jk} \quad \forall \bar{\beta}_{jk} \in [-M, M] \quad \forall (j,k) \in \overrightarrow{E}, \\
	& \label{SIP-C2} \eqref{LN-con1}-\eqref{LN-con5}, \\
 	& v_{jk} \geq 0, \quad  (j,k) \in \overrightarrow{E}.
 	\end{alignat}
 \end{subequations} 
The set of constraints in \eqref{SIP-C1} is known as perspective cuts. Note that there are infinitely many such constraints. Although this problem cannot be solved directly, it lends itself to a delayed cut generation approach whereby a (small) finite subset of constraints in \eqref{SIP-C1}  is kept, the current solution $(\beta^{\star}, g^{\star}, v^{\star})$ of the relaxation is obtained, and all the violated inequalities for the relaxation solution are added for $\bar{\beta}_{jk} = \frac{\beta^{\star}_{jk}}{g^{\star}_{jk}}$ (assuming $\frac{0}{0} = 0$). This process is repeated until termination criteria are met. This procedure can be implemented using the cut callback function available by off-the-shelf solvers such as Gurobi or CPLEX. 
   
\subsection{Selecting $\delta$} \label{deltavalue}
In the MISOCP and SIMILP formulations, one important question is how to identify a valid $\delta$. A natural choice is diag$(\delta) = \lambda_{\min} e$, where $\lambda_{\min}$ is the minimum eigenvalue of $\mathcal{X}^\top\mathcal{X}$ and $e$ is a column vector of ones. The issue with this approach is that if $\lambda_{\min} =0$, then $\text{diag}({\delta})$ becomes a trivial 0 matrix. If $\text{diag}(\delta)$ turns out to be a zero matrix, then MISOCP formulation reduces to the big-$M$ formulation. \cite{frangioni2007sdp} present an effective approach for obtaining a valid $\delta$ by solving the following semidefinite program (SDP)  
\begin{subequations}
	\begin{alignat}{3}
\label{delta} \underset{\revised{\delta\in \mathbb{R}^{|V|}}}{\max} \left\{\sum_{i \in V} \delta_i \,:\, \mathcal{X}^\top \mathcal{X}  - \diag(\delta) \succeq 0, \delta_i \geq 0\right\}.
	\end{alignat}
\end{subequations} 
This formulation can attain a non-zero $D_{\delta}$ even if $\lambda_{\min}=0$. Numerical results by \cite{frangioni2007sdp} show that this method compares favorably with the minimum eigenvalue approach. \cite{zheng2014improving} propose an SDP approach, which obtains $D_{\delta}$ such that the continuous relaxation of {MISOCP} \eqref{eq:misocp} is as tight as possible.    

Similar to \cite{dong2015regularization}, our formulation does not require adding a Tikhonov regularization. In this case, PRef is effective when $\mathcal{X}^\top\mathcal{X}$ is sufficiently diagonally dominant. 
When $n \geq m$ and each row of $\mathcal{X}$ is independent, then $\mathcal{X}^\top\mathcal{X}$ is guaranteed to be a positive semi-definite matrix \citep{dong2015regularization}. On the other hand, when $n < m$, $\mathcal{X}^\top\mathcal{X}$ is not full-rank. Therefore, a Tikhonov regularization term should be added with sufficiently large $\mu$  to make $\mathcal{X}^\top\mathcal{X} + \mu I \succeq 0 $ \citep{dong2015regularization} in order to benefit from the strengthening provided by PRef. 

\section{Experiments} \label{Sec: Computational}
In this section, we report the results of our  numerical experiments that compare different formulations and evaluate the effect of different tuning parameters and estimation strategies. Our experiments are performed on a cluster operating on UNIX  with Intel Xeon E5-2640v4 2.4GHz.\ All formulations are implemented in the Python programming language. Gurobi 8.1 is used as the solver. Unless otherwise stated, a time limit of $50m$ (in seconds), where $m$ denotes the number of nodes, and an MIQP relative optimality gap of $0.01$ are imposed across all experiments after which runs are aborted. The \emph{relative} optimality gap is calculated by RGAP$:=\frac{UB(X)-LB(X)}{UB(X)}$ where UB(X) denotes the objective value associated with the best feasible integer solution (incumbent) and LB(X) represents the best obtained lower bound during the branch-and-bound process for the formulation $X \in \{\mathrm{MIQP}, \mathrm{SIMILP}, \mathrm{MISOCP}\}$.

Unless otherwise stated, we assume $\lambda_n=\log(n)$ which corresponds to the Bayesian information criterion  (BIC) score. To select the big-$M$ parameter, $M$, in all formulations we use the proposal of \citet{park2017bayesian}. Specifically, given $\lambda_n$, we solve each problem without cycle prevention constraints and obtain $\beta^R$. We then use the upper bound $M = 2 \underset{(j,k) \in \overrightarrow{E}}{\max} \, |\beta^R_{jk}|$.
Although this value does not guarantee an upper bound for $M$, the results provided in \cite{park2017bayesian} and \cite{manzour2019integer} computationally confirm that this approach gives a large enough value of $M$.  

The goals of our computational study are twofold. First, we compare the various mathematical formulations to determine which gives us the best performance in Subsection~\ref{sec:synth-data}, compare the  sensitivity to the model parameters in Subsection~\ref{lambda}, and the choice of the regularization term in Subsection~\ref{sec:compl2}.  
Second, in Subsection~\ref{sec:compearly} we use the best-performing formulation to investigate the implications of the early stopping condition on the quality of the solution with respect to the true graph. To be able to perform such a study, we use synthetic data so that the true graph is available. \revised{In Subsection \ref{sec:comp-sota}, we compare our algorithm against two state-of-the-art benchmark algorithms on publicly available datasets.}

We use the package \texttt{pcalg} in \texttt{R} to generate random graphs. First, we create a DAG by \texttt{randomDAG} function and assign random arc weights (i.e., $\beta$) from a uniform distribution, $\mathcal{U}[0.1, 1]$.  
Next, the resulting DAG and random coefficients are fed into the \texttt{rmvDAG} function to generate multivariate data based on linear SEMs (columns of matrix $\mathcal X$) with the standard normal error distribution. We consider $m\in\{10,20,30,40\}$ nodes and $n=100$ samples. The average outgoing degree of each node,  denoted by $d$, is set to two. We generate 10 random Erd\H{o}s-R\'enyi graphs for each setting ($m$, $n$, $d$). 
 We observe that in our instances, the minimum eigenvalue of $\mathcal{X}^\top \mathcal{X}$ across all instances is 3.26 and the maximum eigenvalue is 14.21.

Two types of problem instances are considered: (i) a set of instances with known moral graph corresponding to the true DAG; (ii) a set of instances with a complete undirected graph, i.e., assuming no prior knowledge. We refer to the first class of instances as \textit{moral} instances and to the second class as \textit{complete} instances. The observational data, $\mathcal{X}$, for both classes of instances are the same. The function \texttt{moralize(graph)} in the \texttt{pcalg} \texttt{R}-package is used to generated the moral graph from the true DAG. Although the moral graph can be consistently estimated from data using penalized estimation procedures with polynomial complexity
\citep[e.g.,][]{loh2014high}, the quality of moral graph affects all optimization models. Therefore, we use the true moral graph in our experiments, unless otherwise stated.

\subsection{Comparison of Mathematical Formulations} \label{sec:synth-data}

We use the following MIQP-based metrics to measure the quality of a solution: relative optimality gap (RGAP), computation time in seconds (Time), Upper Bound (UB), Lower Bound (LB), objective function value (OFV) of the initial continuous relaxation, and the number of explored nodes in the branch-and-bound tree ($\#$ BB). 
An  in-depth analysis comparing the existing mathematical formulations that rely on linear encodings of the constraints in \eqref{CP-con1} for MIQP formulations is conducted by \cite{manzour2019integer}. The authors conclude that  {MIQP+LN} formulation outperforms the other MIQP formulations, and the promising performance of MIQP+LN can be attributed to its size: (1) {MIQP+LN} has fewer binary variables and constraints than {MIQP+TO}, (2) {MIQP+LN} is a compact (polynomial-sized) formulation in contrast to {MIQP+CP} which has an exponential number of constraints.   Therefore, in this paper, 
we  analyze the formulations based on the convex encodings of  the constraints in \eqref{CP-con1}.

\subsubsection{Comparison of MISOCP formulations} \label{sec:MISOCP-experiments}
We next experiment with MISOCP formulations. For the set of constraints in \eqref{CP-con2}, we use LN, TO, and CP constraints discussed in Section \ref{lit1} resulting in three formulations denoted as {MISOCP+LN}, {MISOCP+TO}, {MISOCP+CP}, respectively. The {MISOCP+TO} formulation fails to find a feasible solution for instances with 30 and 40 nodes, see Table \ref{Details}. For moral instances, the optimality gaps for {MISOCP+TO} are 0.000 and 0.021 for instances with 10 and 20 nodes, respectively; for complete instances, the optimality gap for {MISOCP+TO} formulation are 0.009 and 0.272 for instances with 10 and 20 nodes, respectively.  Moreover, Table \ref{Details} illustrates that {MISOCP+LN} performs better than {MISOCP+TO} for even small instances (i.e., 10 and 20 nodes). 

\begin{table}[t] 
		\caption{Optimality gaps for {MISOCP+TO} and {MISOCP+LN} formulations} \label{Details}
\centering
\footnotesize{
	\begin{tabular}{l|l|ll|l|l}
		\hline
		& \multicolumn{2}{c}{Moral} &  & \multicolumn{2}{c}{Complete} \\ \hline 
		$m$ & {MISOCP+TO}  & {MISOCP+LN} &  & {MISOCP+TO}   & {MISOCP+LN}  \\ \hline 
		10 & 0.000            & 0.000           &  & 0.009        & 0.008       \\
		20 & 0.021        & 0.006      &  & 0.272             & 0.195        \\
		30 & -            & 0.010      &  & -             & 0.195        \\
		40 & -            & 0.042      &  & -             & 0.436   \\ \hline
	\end{tabular} 	
	\\ ``-" denotes that no feasible solution, i.e., UB, is obtained within the time limit, so optimality gap cannot be computed. 
}
\end{table} 

For {MISOCP+CP}, instead of incorporating all constraints given by \eqref{CE}, we begin with no constraint of type \eqref{CE}. Given an integer solution with cycles, we detect a cycle and impose a new cycle prevention constraint to remove the detected cycle. Depth First Search (DFS) can detect a cycle in a directed graph with complexity $O(|V|+|E|)$. Gurobi \texttt{LazyCallback} function  is used, which allows adding cycle prevention constraints in the branch-and-bound algorithm, whenever an integer solution with cycles is found. The same approach is used by \cite{park2017bayesian} to solve the corresponding MIQP+CP. Note that Gurobi solver follows a branch-and-cut implementation and adds many general-purpose and special-purpose cutting planes.

Figures \ref{Figurea: MISOCP} and \ref{Figureb: MISOCP} show that {MISOCP+LN} outperforms {MISOCP+CP} in terms of relative optimality gap and computational time. In addition, {MISOCP+LN} attains better upper and lower bounds than {MISOCP+CP} (see, Figures \ref{Figurec: MISOCP} and \ref{Figured: MISOCP}).  {MISOCP+CP} requires the solution of a second-order cone program (SOCP) after each cut, which reduces its computational efficiency and results in higher optimality gaps than {MISOCP+LN}. {MISOCP+TO} requires many binary variables which makes the problem very inefficient when the network becomes denser and larger as shown in Table \ref{Details}.  Therefore, we do not illustrate the {MISOCP+TO} results in Figure \ref{Figure: MIQP}.

\begin{figure*}[t!]
	\begin{subfigure}[t]{0.49\textwidth}
		\centering
		\includegraphics[scale=0.22]{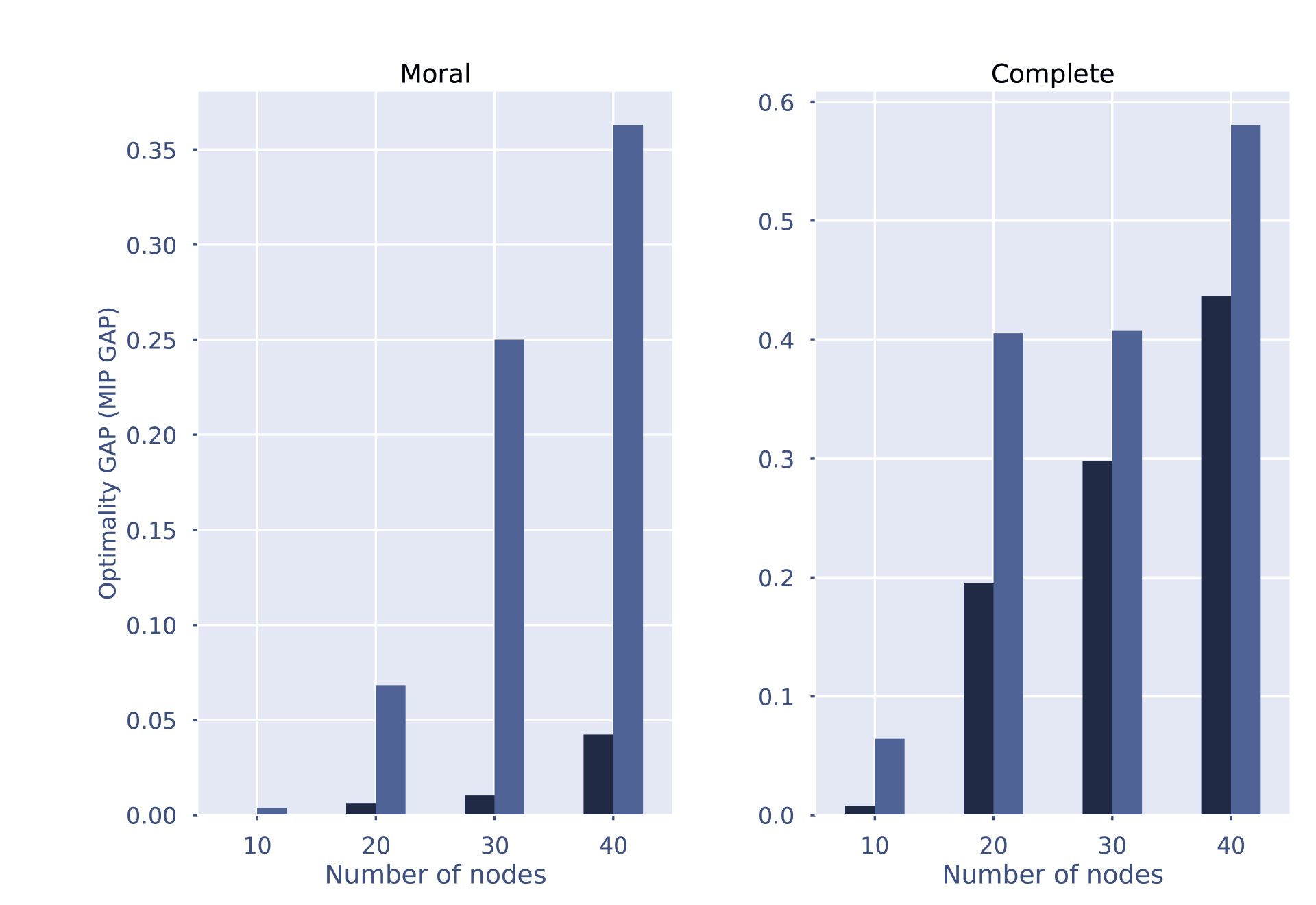}
		\caption{RGAPs}
				\label{Figurea: MISOCP}
	\end{subfigure}%
	~ 
	\begin{subfigure}[t]{0.49\textwidth}
		\centering
		\includegraphics[scale=0.22]{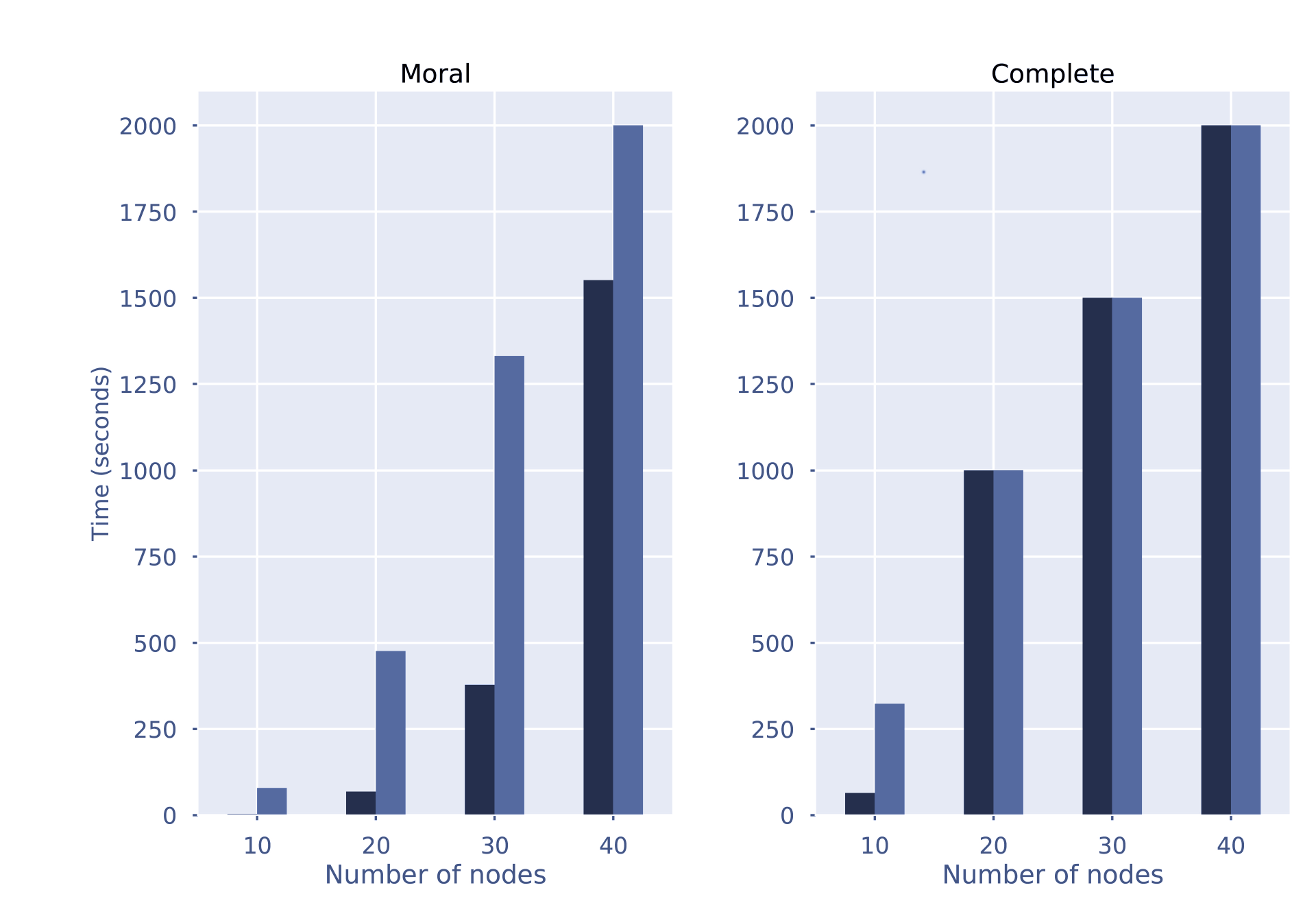}
		\caption{Time (in seconds)}
			\label{Figureb: MISOCP}
	\end{subfigure}
	~
	%	\hspace*{-1in}
	\begin{subfigure}[t]{0.49\textwidth}
		\centering
		\includegraphics[scale=0.22]{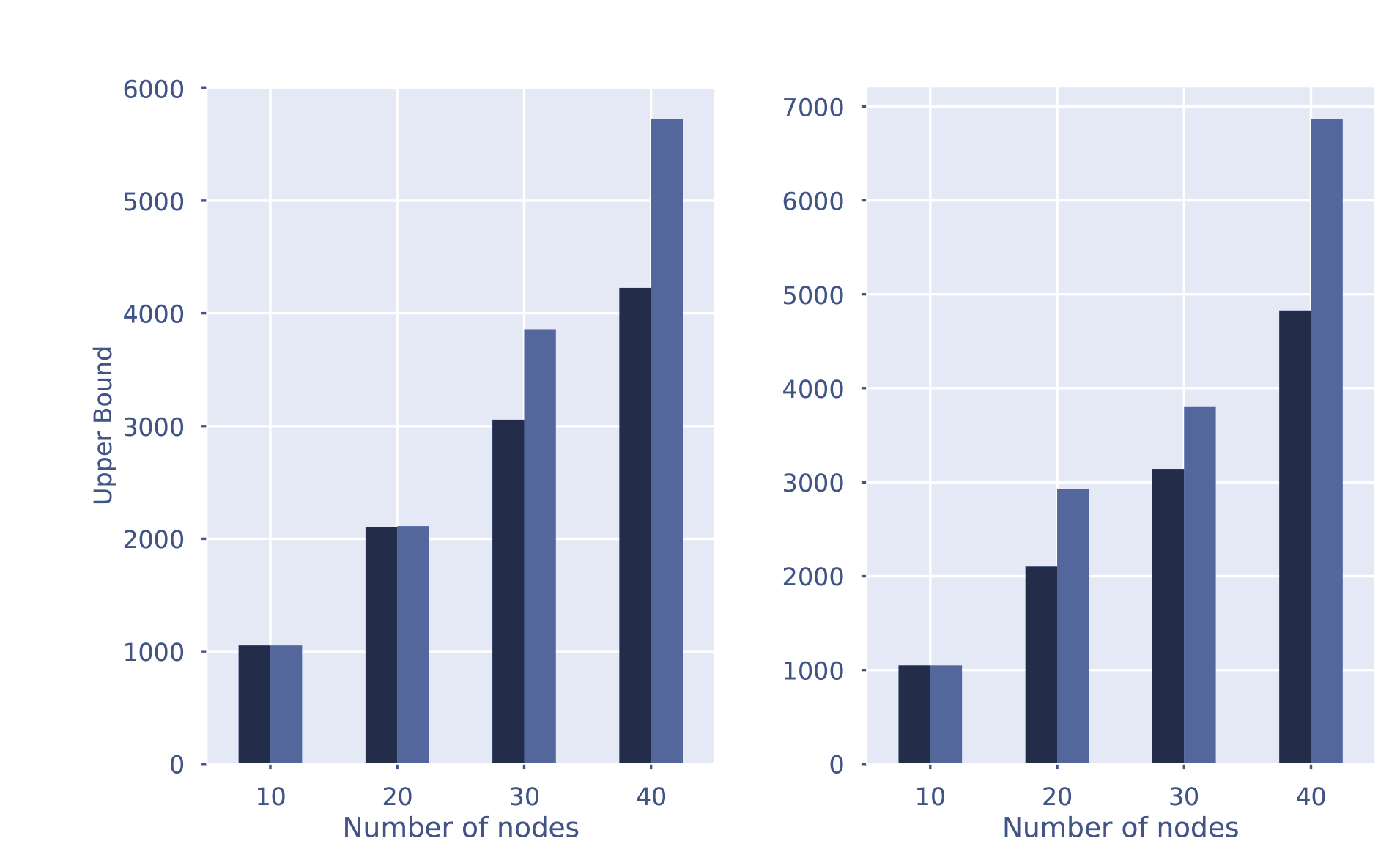}
		\caption{Best upper bounds}
					\label{Figurec: MISOCP}
	\end{subfigure}
	~ 
	\begin{subfigure}[t]{0.49\textwidth}
		\centering
		\includegraphics[scale=0.22]{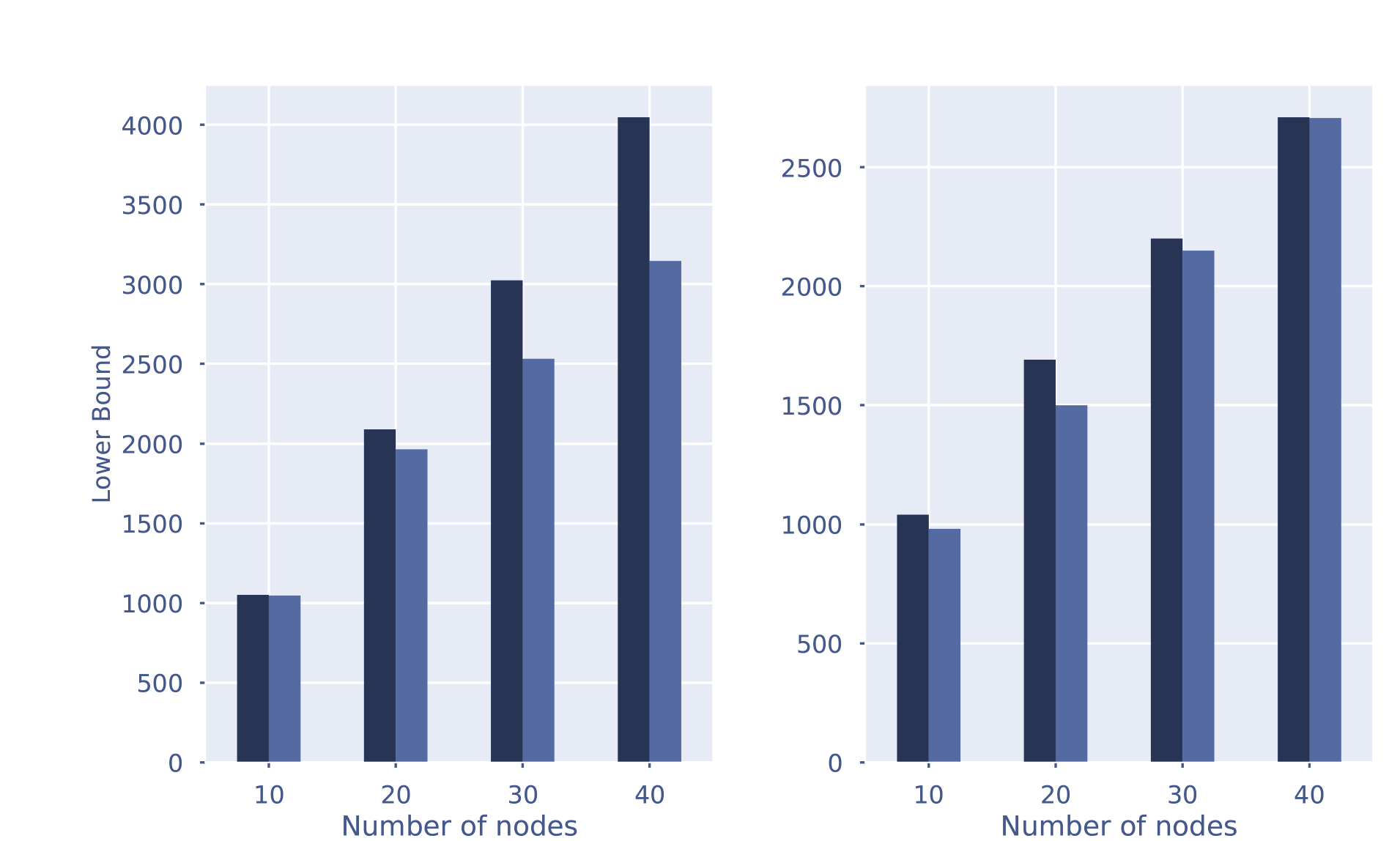}
		\caption{Best lower bounds}
					\label{Figured: MISOCP}
	\end{subfigure}
	~
	\caption{Optimization-based measures for MISOCP+LN (left bar) and MISOCP+CP (right bar) formulations for $n=100$.} 
				\label{Figure: MIQP}
\end{figure*}

\begin{figure*}[t!]
	\begin{subfigure}[t]{0.49\textwidth}
		\centering
		\includegraphics[scale=0.22]{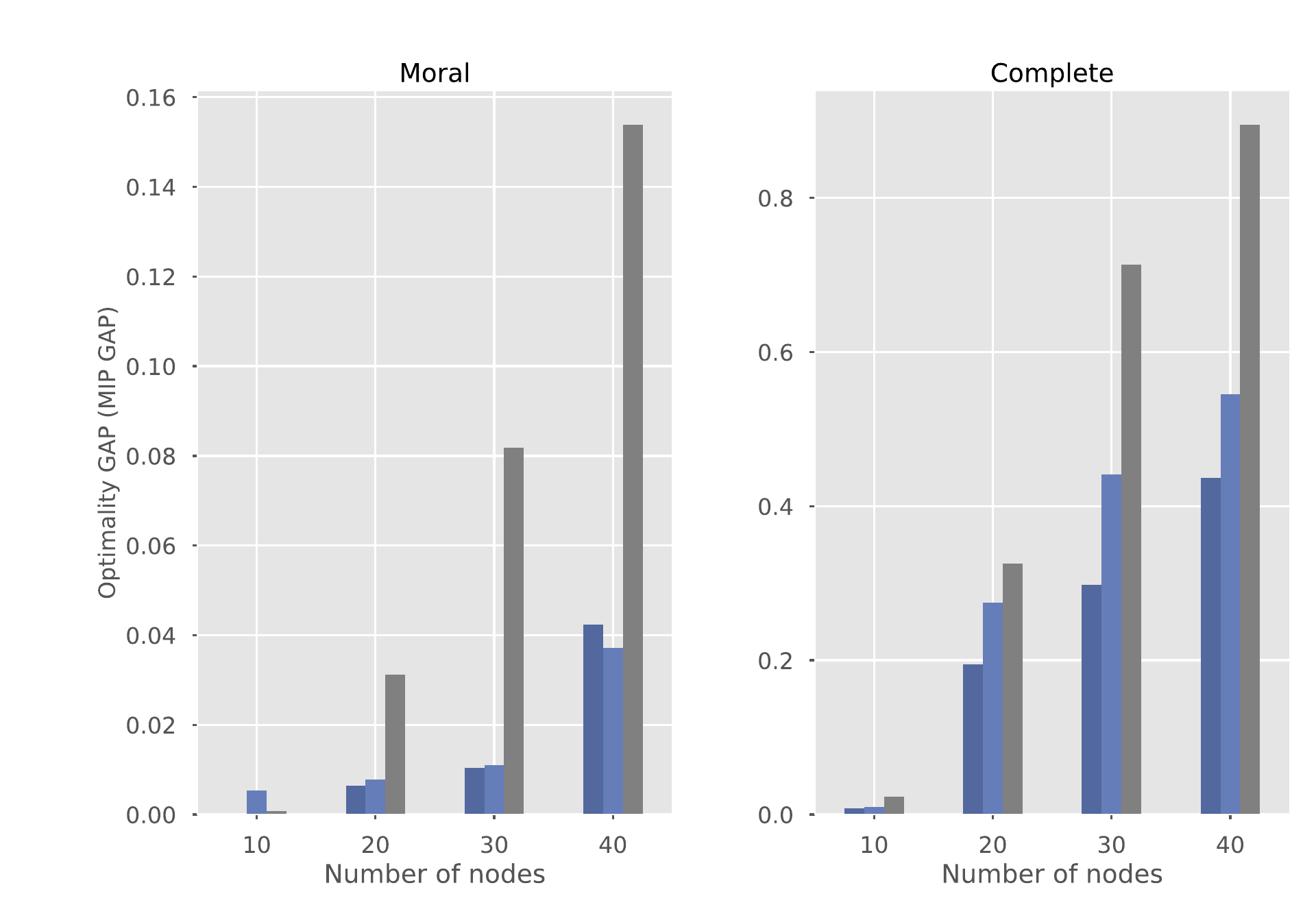}
		\caption{RGAPs}
		\label{Figurea: SIMILP}
	\end{subfigure}%				
	~ 
	\begin{subfigure}[t]{0.49\textwidth}
		\centering
		\includegraphics[scale=0.22]{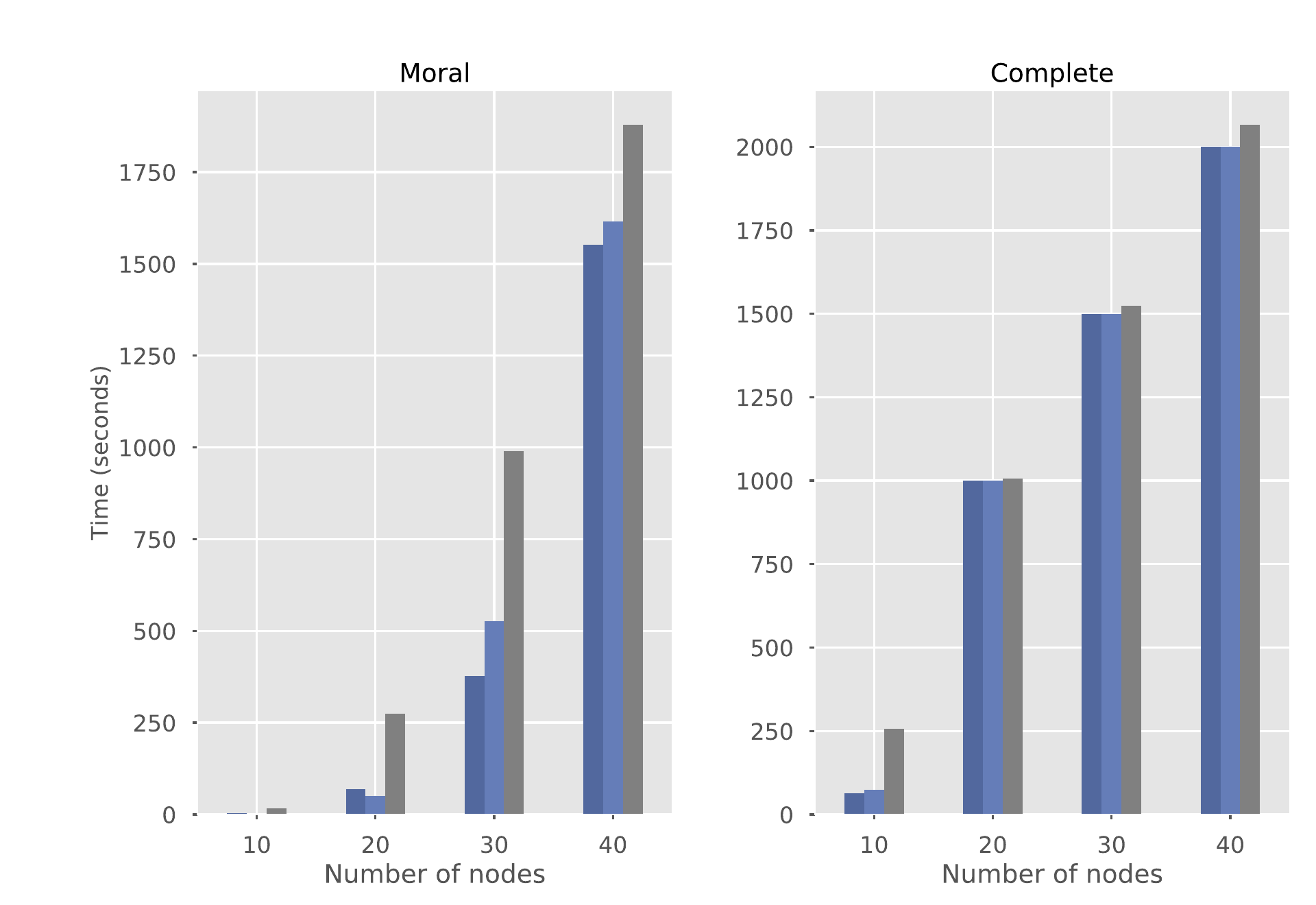}
		\caption{Time (in seconds)}
		\label{Figureb: SIMILP}
	\end{subfigure} 	
		%	\hspace*{-1in}
		\begin{subfigure}[t]{0.49\textwidth}
			\centering
			\includegraphics[scale=0.22]{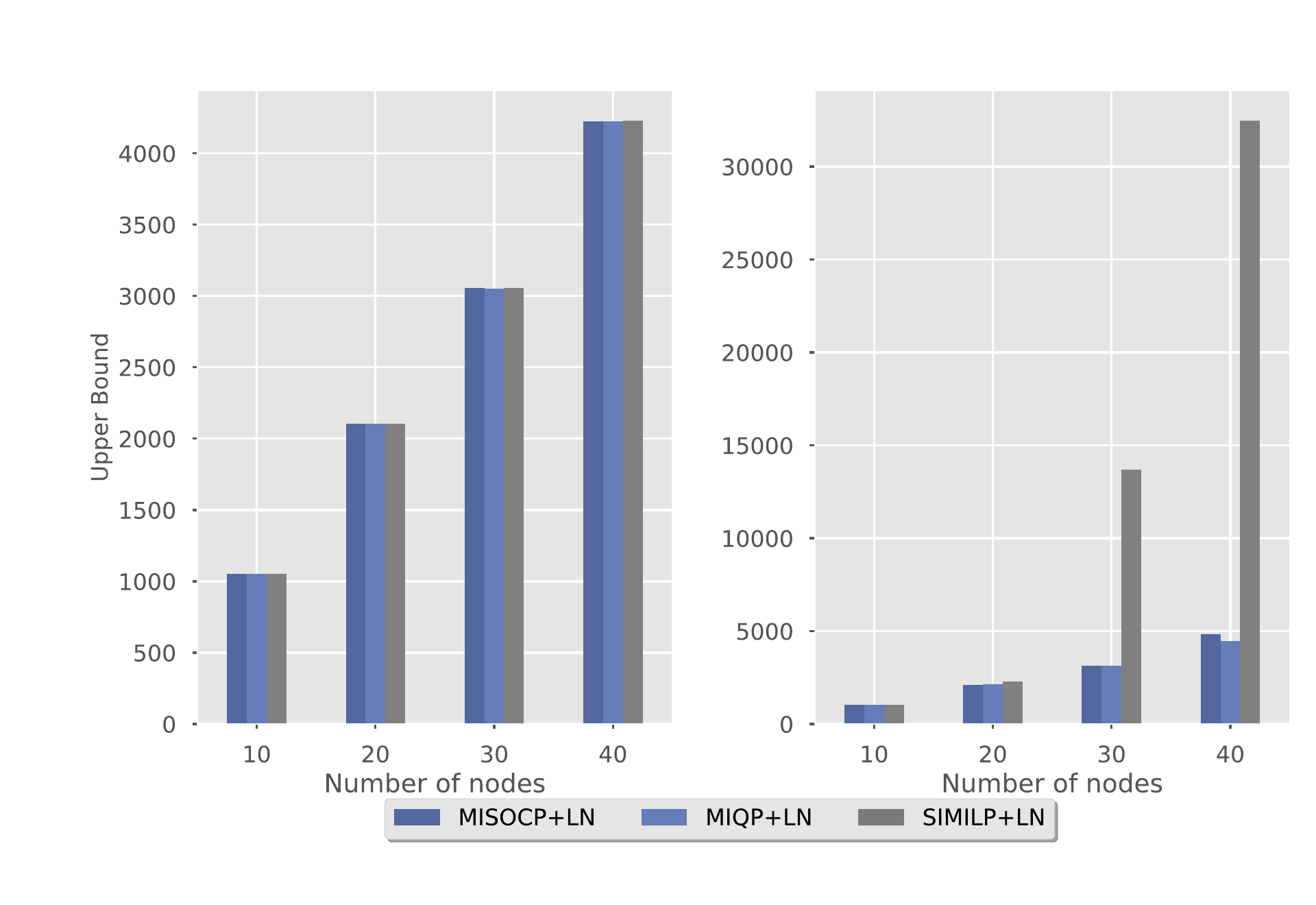}
			\caption{Best upper bounds}
			\label{Figurec: SIMILP}
		\end{subfigure}
		~ 
		\begin{subfigure}[t]{0.49\textwidth}
			\centering
			\includegraphics[scale=0.22]{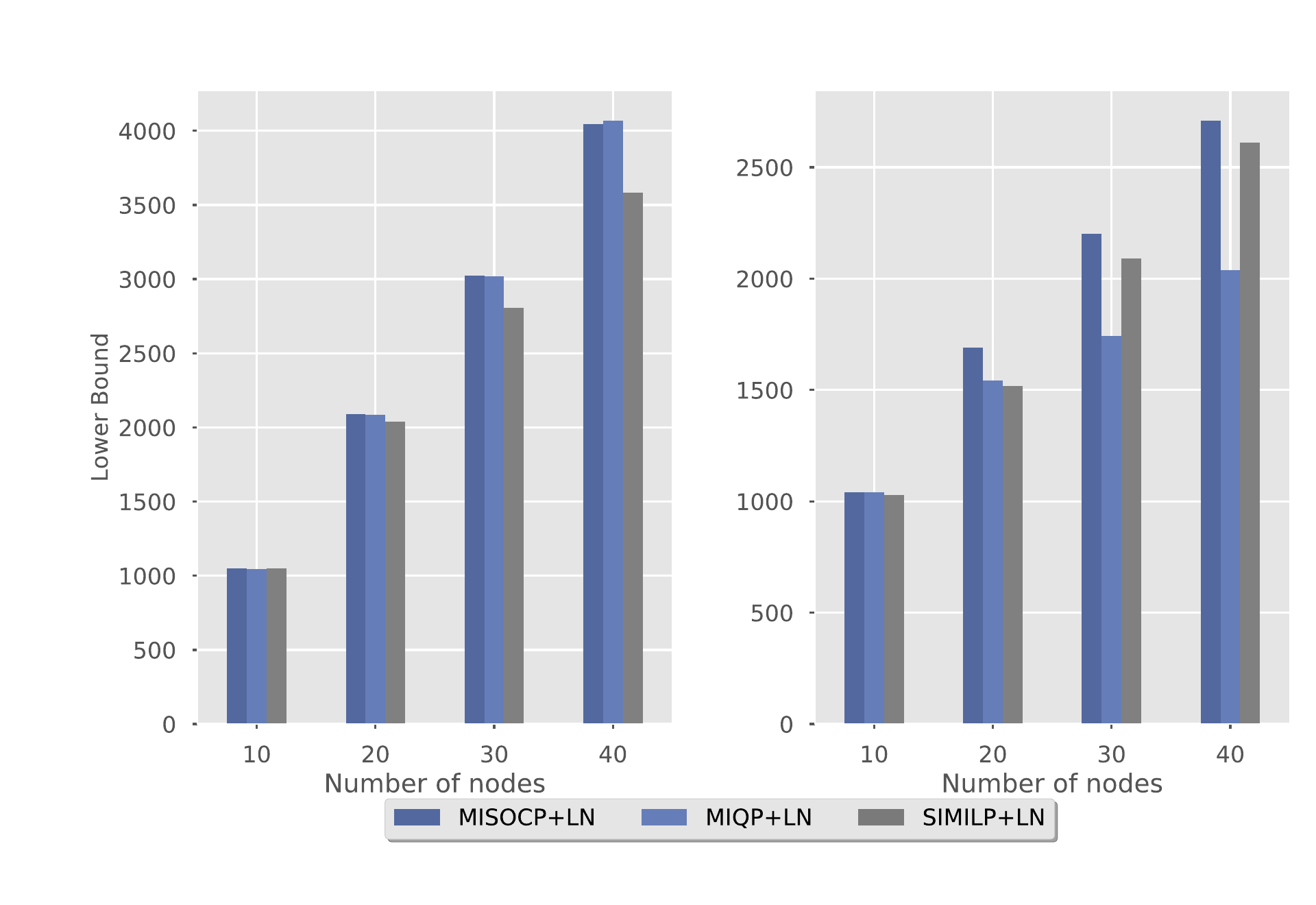}
			\caption{Best lower bounds}
			\label{Figured: SIMILP}
		\end{subfigure}
	\caption{Optimization-based measures for {MISOCP+LN}, {MIQP+LN}, and {SIMILP+LN} formulations for $n=100$.} 
	\label{Figure: SIMLP}
\end{figure*}

\begin{figure*}[t!]
	\begin{subfigure}[t]{0.49\textwidth}
		\centering
		\includegraphics[scale=0.22]{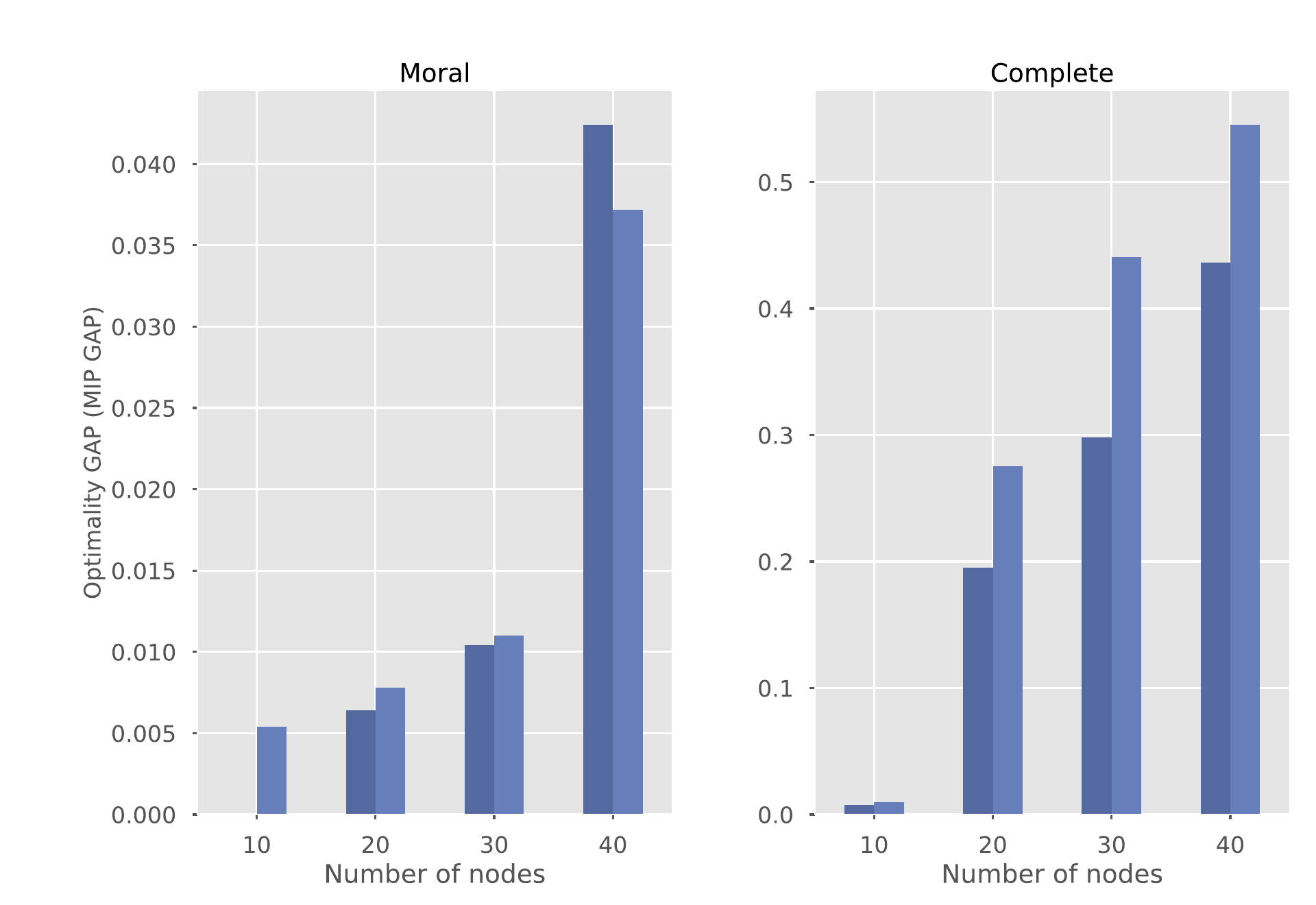}
		\caption{RGAPs}
		\label{Figurea: Best}
	\end{subfigure}%				
	~ 
	\begin{subfigure}[t]{0.49\textwidth}
		\centering
		\includegraphics[scale=0.22]{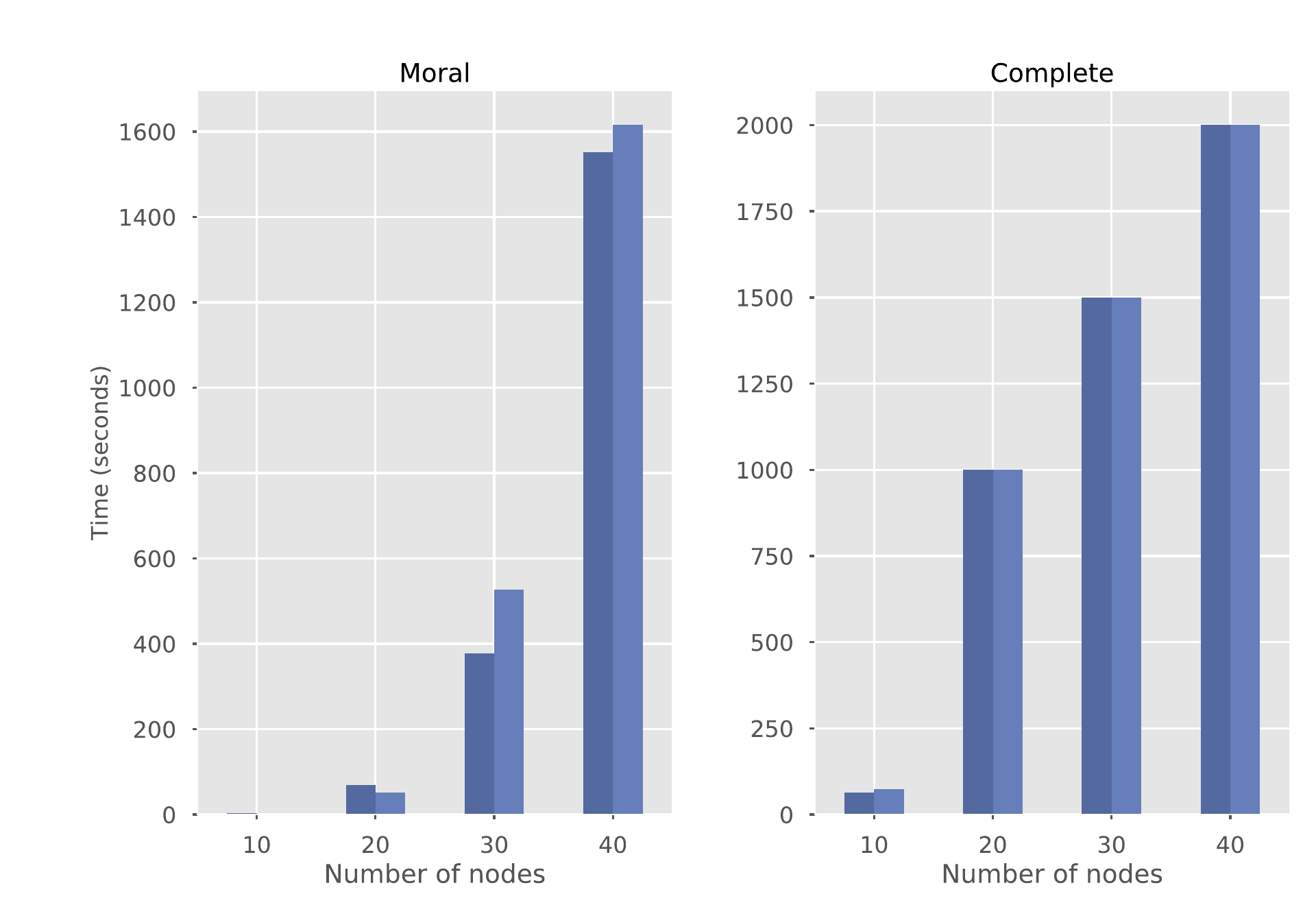}
		\caption{Time (in seconds)}
		\label{Figureb: Best}
	\end{subfigure} 	
	%	\hspace*{-1in}
	\begin{subfigure}[t]{0.49\textwidth}
		\centering
		\includegraphics[scale=0.22]{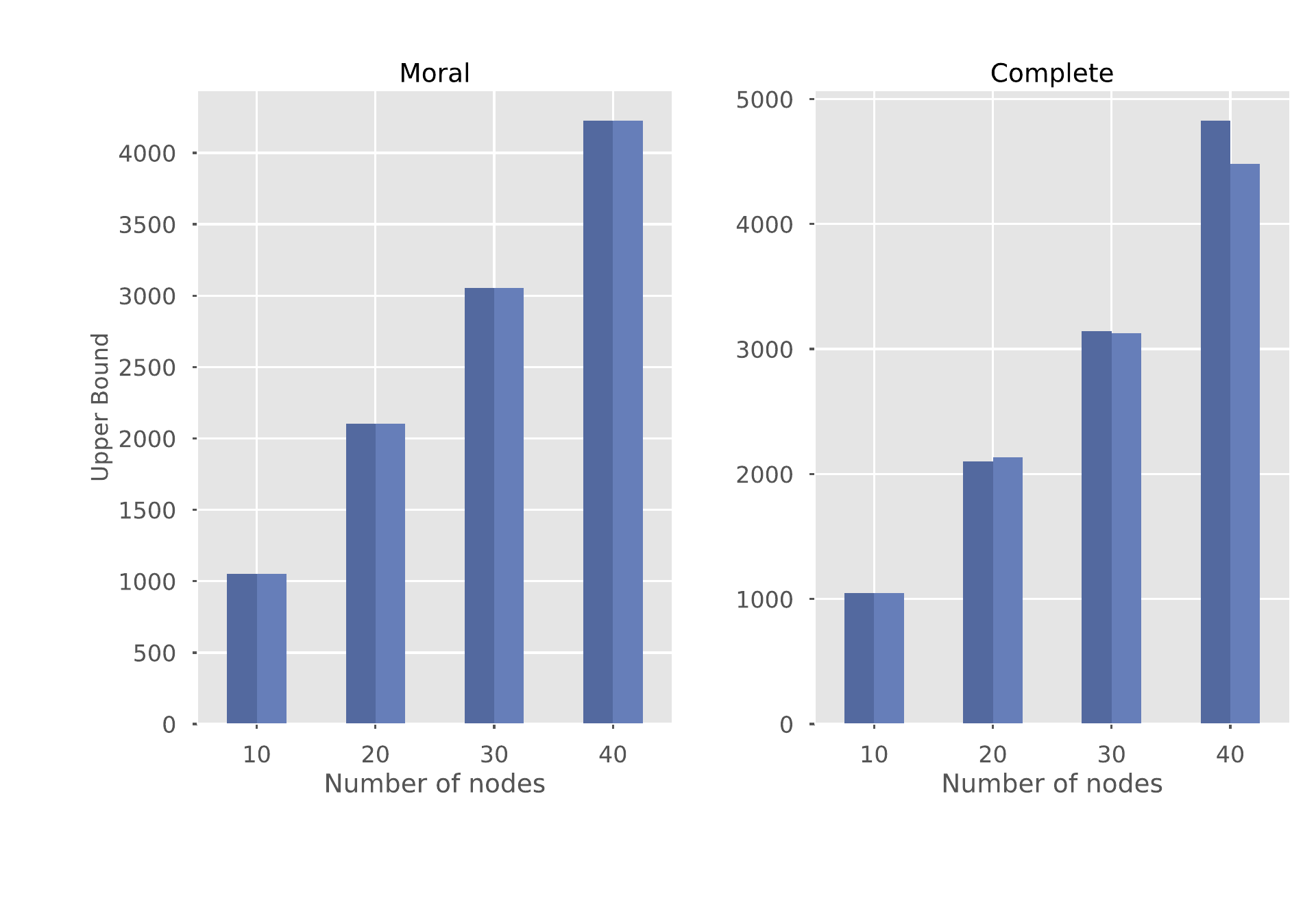}
		\caption{Best upper bounds}
				\label{Figurec: Best}
	\end{subfigure}
	~ 
	\begin{subfigure}[t]{0.49\textwidth}
		\centering
		\includegraphics[scale=0.22]{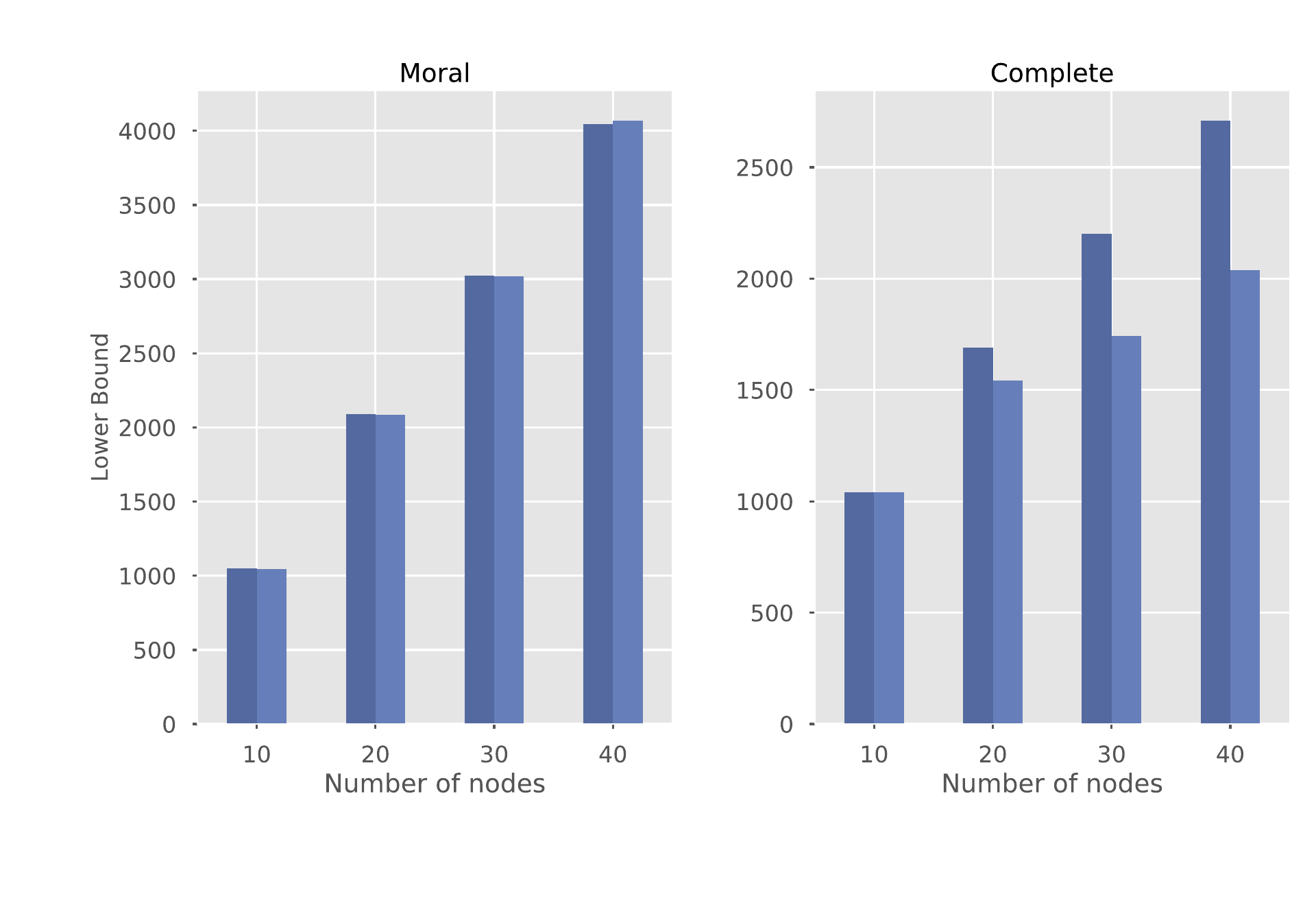}
		\caption{Best lower bounds}
				\label{Figured: Best}
	\end{subfigure}
	\begin{subfigure}[t]{0.49\textwidth}
			\centering
			\includegraphics[scale=0.22]{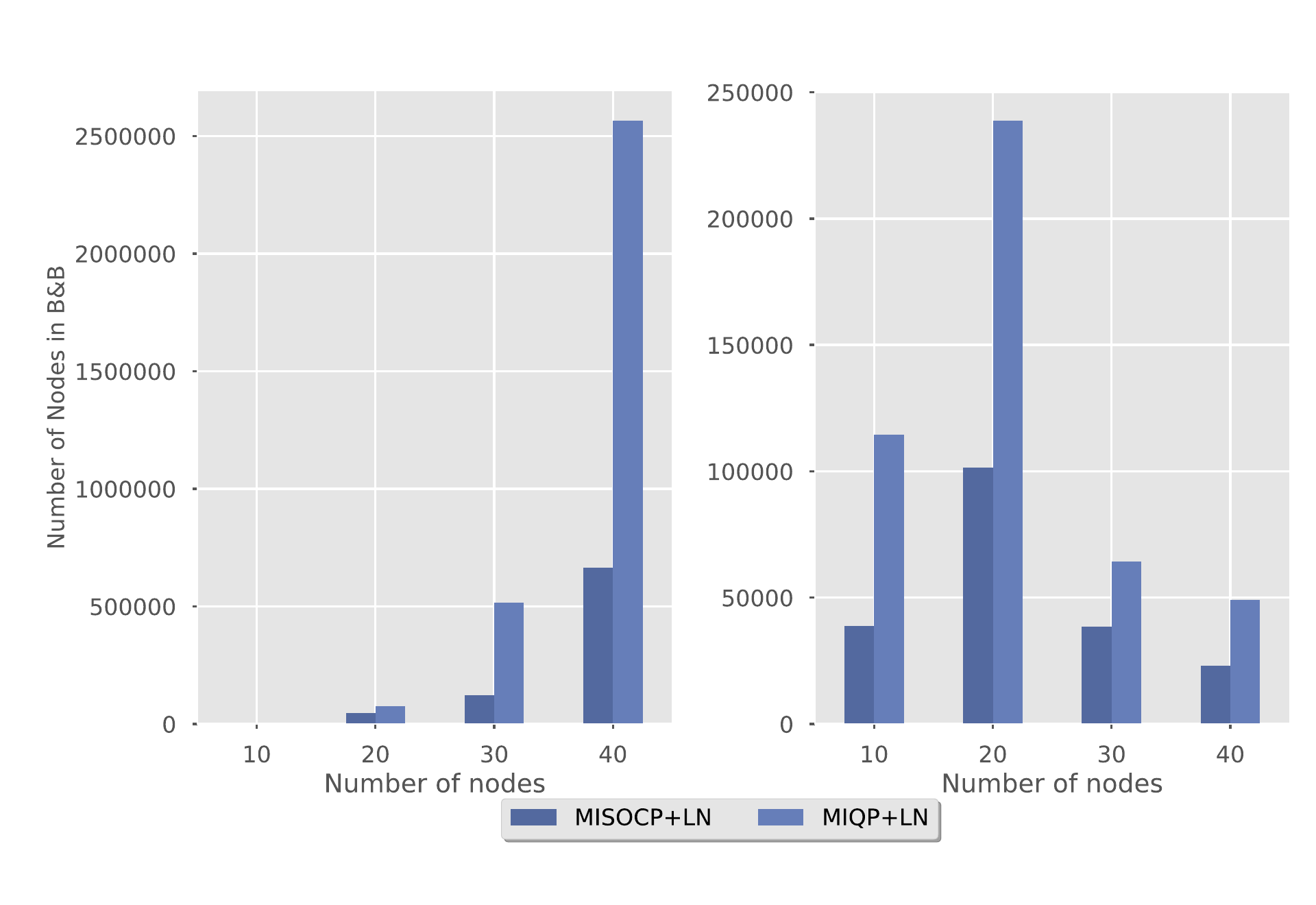}
			\caption{Number of Branch and Bound nodes}
					\label{Figuree: Best}
	\end{subfigure}
		~ 
	\begin{subfigure}[t]{0.49\textwidth}
			\centering
			\includegraphics[scale=0.22]{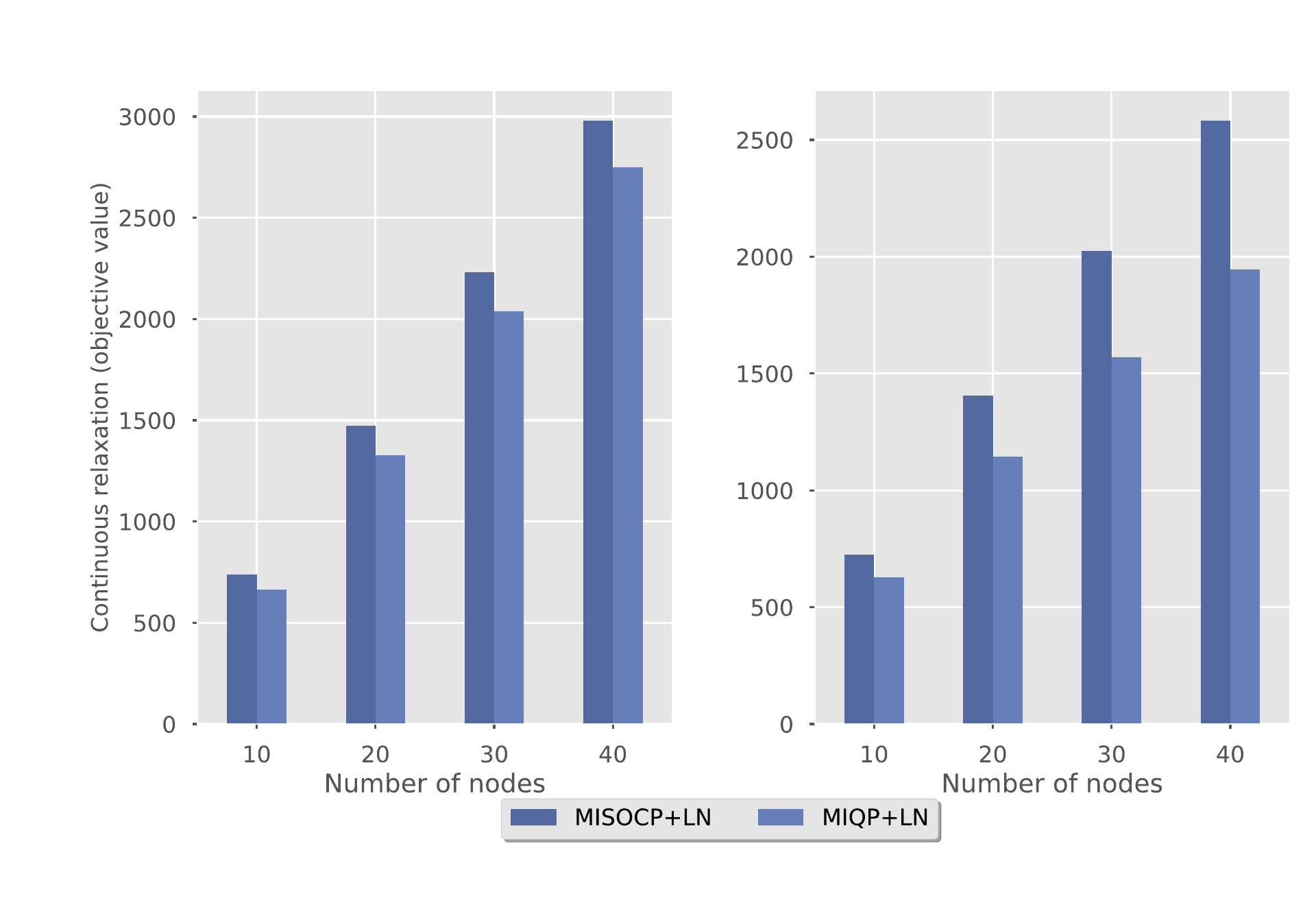}
			\caption{Continuous relaxation objective function}
					\label{Figuref: Best}
	\end{subfigure}
			\label{Figure: Best}
			
			\caption{Optimization-based measures for {MISOCP+LN}, {MIQP+LN} formulations for $n=100$.} 
			\label{Figure: MISOCP-MIQP}
\end{figure*}

\subsubsection{Comparison of MISOCP versus SIMILP} \label{sec:MISOCP ver MIMILP-experiments}
Our computational experiments show that the SIMILP formulation generally performs poorly when compared to {MISOCP+LN} and {MIQP+LN} in terms of optimality gap, upper bound, and computational time. We report the results for {SIMILP+LN}, {MISOCP+LN}, and {MIQP+LN} formulations in Figure \ref{Figure: SIMLP}.  We only consider the LN formulation because that is the best performing model among the alternatives both for MISOCP and MIQP formulations. 

Figures \ref{Figurea: SIMILP} and \ref{Figureb: SIMILP} show the relative optimality gaps and computational times for these three formulations. Figures \ref{Figurec: SIMILP} and \ref{Figured: SIMILP} demonstrate that {SIMILP+LN} attains lower bounds that are comparable with other two formulations. In particular, for complete instances with large number of nodes, {SIMILP+LN} attains better lower bounds than {MIQP+LN}. Nonetheless, {SIMILP+LN} fails to obtain good upper bounds. Therefore, the relative optimality gap is considerably larger for {SIMILP+LN}. 

The poor performance of {SIMILP+LN} might be because state-of-the-art optimization packages (e.g., Gurobi, CPLEX) use many heuristics to obtain a good feasible solution (i.e., upper bound) for a compact formulation. In contrast, SIMILP is not a compact formulation, and we build the SIMILP gradually by adding violated constraints iteratively. Therefore, a feasible solution to the original formulation is not available while solving the relaxations with a subset of the constraints \eqref{SIP-C1}. Moreover, the optimization solvers capable of solving MISOCP formulations have witnessed noticeable improvement due to theoretical developments in this field. In particular, Gurobi reports 20\% and 38\% improvement  in solution time for versions 8 and 8.1, respectively. In addition, Gurobi v8.1 reports over four times faster solution times than CPLEX for solving MISOCP on their benchmark instances.

\subsubsection{Comparison of MISOCP versus MIQP formulations} \label{sec:MISOCP ver MIQP experiments}
In this section, we demonstrate the benefit of using the second-order conic formulation {MISOCP+LN} instead of the linear big-$M$ formulation {MIQP+LN}. As before, we only consider the LN formulation for this purpose.  Figures \ref{Figurea: Best} and \ref{Figureb: Best} show that  {MISOCP+LN} performs better than MIQP+LN in terms of the average relative optimality gap across all number of nodes $m \in \{10,20,30,40\}$. The only exception is $m=40$ for moral instances, for which {MIQP+LN} performs better than {MISOCP+LN}. Nonetheless, we observe that {MISOCP+LN} clearly outperforms {MIQP+LN} for complete instances which are more difficult to solve.  

Figures \ref{Figurec: Best} and \ref{Figured: Best} show the performance of both formulations in terms of the resulting upper and lower bounds on the objective function. We  observe that {MISOCP+LN} attains better lower bounds especially for complete instances. However, {MISOCP+LN}  cannot always obtain a better upper bound. In other words, {MISOCP+LN} is more effective in improving the lower bound instead of the upper bound as expected.

Figures \ref{Figuree: Best} and \ref{Figuref: Best} show that {MISOCP+LN} uses fewer branch-and-bound nodes and achieves   better continuous relaxation values than {MIQP+LN}.  

\subsection{Analyzing the Choices of $\lambda_n$ and $M$} \label{lambda}
We now experiment on different values for $\lambda_n$ and $M$ to assess the effects of these parameters on the performance of {MISOCP+LN} and {MIQP+LN}. First, we consider multiple $\lambda$ values, $\lambda_n \in \{\log{(n)}, 2\log(n), 4\log(n)\}$, while keeping the value of  $M$ the same (i.e.,  $M=2\underset{(j,k) \in \overrightarrow{E}}{\max} \, |\beta^\star_{jk}|$). Table \ref{Table: lambda} shows that as $\lambda_n$ increases, {MISOCP+LN} consistently performs better than  MIQP+LN in terms of the relative optimality gap, computational time, the number of branch-and-bound nodes, and continuous relaxation objective function value. Indeed, the difference becomes even more pronounced for more difficult cases (i.e., complete instances). For instance, for $\lambda_n = 4 \log(n)=18.4$, the relative optimality gap reduces from 0.465 to 0.374, an over 24\% improvement. \revised{In addition, {MISOCP+LN} allows more instances to be solved to optimality within the time limit. For example, for  moral instances with $m = 40$, $\lambda_n = 18.4$, eight out of ten instances are solved to optimality using {MISOCP+LN} while only two instances are solved to optimality by {MIQP+LN}.}

\begin{table}[t!]
	\fontsize{60}{30}\selectfont
	\caption{Computational results for different values of $\lambda_n = t \log(n)$ for $t \in \{1,2,4\}$, * indicates that the problem is solved to the optimality tolerance. Superscript $^{i}$ indicates that out of ten runs, $i$ instances finish before hitting the time limit. Time is averaged over instances that solve within the time limit, RGAP is averaged over instances that reach the time limit.  Better RGAPs are in bold.} \label{Table: IP}
	\resizebox{1\textwidth}{!}{
		\begin{adjustbox}{}{}
			\begin{tabular}{llllllllllllllll|lllllllllllllll}							\\ 	 	\Xhline{2\arrayrulewidth}  \Xhline{2\arrayrulewidth} 
				&&& \multicolumn{11}{c}{Moral} &&& \multicolumn{11}{c}{Complete} \\ 
				\Xhline{2\arrayrulewidth}  \Xhline{2\arrayrulewidth} 
				& \multicolumn{2}{c}{Instances} & & \multicolumn{2}{c}{RGAP}  & & \multicolumn{2}{c}{Time} & & \multicolumn{2}{c}{$\#$ nodes} & & \multicolumn{2}{c}{Relaxation OFV} & & \multicolumn{2}{c}{RGAP}  & & \multicolumn{2}{c}{Time} & & \multicolumn{2}{c}{$\#$ nodes} & & \multicolumn{2}{c}{Relaxation OFV} \\ 
				& $m$ &$\lambda_n$  &  & MISOCP & MIQP &  & MISOCP & MIQP & & MISOCP & MIQP &  & MISOCP & MIQP  & &  MISOCP & MIQP &  & MISOCP & MIQP & & MISOCP & MIQP &  & MISOCP & MIQP  \\
				\Xhline{2\arrayrulewidth} 
				&10 & 4.6 &  & * & * &  & 3 & 2 &  & 1306 & 3715 &  & 738.7 & 664.9 &  & * & * &  & 65 & 74 &  & 38850 & 114433 &  & 724.4 & 629.3 \\ 
&10 & 9.2 &  & * & * &  & 4 & 3 &  & 1116 & 2936 &  & 784.6 & 693.5 &  & * & * &  & 31 & 39 &  & 15736 & 55543 &  & 772.5 & 662.2 \\ 
&10 & 18.4 &  & * & * &  & 3 & 2 &  & 1269 & 2457 &  & 857.0 & 747.5 &  & * & * &  & 26 & 29 &  & 18223 & 41197 &  & 844.5 & 720.2 \\ 
&20 & 4.6 &  & * & * &  & 69 & 51 &  & 46513 & 76261 &  & 1474.2 & 1325.8 &  &\textbf{.195} & .275 &  & 1000 & 1000 &  & 101509 & 238765 &  & 1404.9 & 1144.5 \\ 
&20 & 9.2 &  & * & * &  & 26 & 27 &  & 10695 & 31458 &  & 1589.6 & 1406.8 &  & \textbf{.152} & .250 &  & 1000 & 1000 &  & 152206 & 274514 &  & 1526.9 & 1238.6 \\ 
&20 & 18.4 &  & * & * &  & 24 & 36 &  & 9574 & 33788 &  & 1763.7 & 1552.7 &  &\textbf{.113}$^{\revised{2}}$& .208 &  & $944$ & 1000 &  & 159789 & 277687 &  & 1697.1 & 1395.0 \\ 
&30 & 4.6 &  & \textbf{.010}$^{\revised{8}}$ & 0.011$^{\revised{8}}$ &  & $378$ & $527$ &  & 121358 & 514979 &  & 2230.1 & 2037.7 &  & \textbf{.298} & .441 &  & 1500 & 1500 &  & 38474 & 64240 &  & 2024.0 & 1569.7 \\ 
&30 & 9.2 &  & * & * &  & 104 & 291 &  & 33371 & 248190 &  & 2392.4 & 2168.5 &  & \textbf{.239} & .395 &  & 1500 & 1500 &  & 59034 & 71475 &  & 2217.5 & 1741.5 \\ 
&30 & 18.4 &  & * & * &  & 48 & 74 &  & 15649 & 57909 &  & 2608.3 & 2383.8 &  & \textbf{.215} & .318 &  & 1500 & 1500 &  & 74952 & 96586 &  & 2449.2 & 2006.9 \\ 
&40 & 4.6 &  & .042$^{\revised{6}}$ & \textbf{.037}$^{\revised{4}}$&  & $1551$ & $1615$ &  & 664496 & 2565247 &  & {2979.3} & 2748.6 &  & \textbf{.436} & .545 &  & 2000 & 2000 &  & 23083 & 49050 &  & 2582.0 & 1946.3 \\ 
&40 & 9.2 &  & \textbf{.024}$^{\revised{8}}$ & .036$^{\revised{4}}$ &  & $1125$ & $1336$ &  & 353256 & 1347702 &  & 3200.7 & 2923.5 &  & \textbf{.397} & .473 &  & 2000 & 2000 &  & 29279 & 73917 &  & 2869.9 & 2216.9 \\ 
&40 & 18.4 &  & \textbf{.024}$^{\revised{8}}$ & .035$^{\revised{2}}$ &  & $1099$ & $1375$ &  & 434648 & 1137666 &  & 3521.8 & 3225.4 &  & \textbf{.374} & .465 &  & 2000 & 2000 &  & 31298 & 60697 &  & 3240.1 & 2633.1 \\
				\Xhline{2\arrayrulewidth}
			\end{tabular}
	\end{adjustbox}}
	\vskip 2ex
	\label{Table: lambda}
\end{table}

\begin{table}[t!]
	\fontsize{60}{30}\selectfont
	\caption{Computational results for different values of $\gamma$, * indicates that the problem is solved to the optimality tolerance. Superscript $^{i}$ indicates that out of ten runs, $i$ instances finish before hitting the time limit. Time is averaged over instances that solve within the time limit, RGAP is averaged over instances that reach the time limit. Better RGAPs are in bold.} \label{Table: IP}
	\resizebox{1\textwidth}{!}{
		\begin{adjustbox}{}{}
			\begin{tabular}{llllllllllllllll|lllllllllllllll}							\\ 	 	\Xhline{2\arrayrulewidth}  \Xhline{2\arrayrulewidth} 
				&&& \multicolumn{11}{c}{Moral} &&& \multicolumn{11}{c}{Complete} \\ 
				\Xhline{2\arrayrulewidth}  \Xhline{2\arrayrulewidth} 
				& \multicolumn{2}{c}{Instances} & & \multicolumn{2}{c}{RGAP}  & & \multicolumn{2}{c}{Time} & & \multicolumn{2}{c}{$\#$ nodes} & & \multicolumn{2}{c}{Relaxation OFV} & & \multicolumn{2}{c}{RGAP}  & & \multicolumn{2}{c}{Time} & & \multicolumn{2}{c}{$\#$ nodes} & & \multicolumn{2}{c}{Relaxation OFV} \\ 
				& $m$ &$\gamma$  &  & MISOCP & MIQP &  & MISOCP & MIQP & & MISOCP & MIQP &  & MISOCP & MIQP  & &  MISOCP & MIQP &  & MISOCP & MIQP & & MISOCP & MIQP &  & MISOCP & MIQP  \\
				\Xhline{2\arrayrulewidth} 
	&10 & 2 &  & * & * &  & 3 & 2 &  & 1306 & 3715 &  & 738.7 & 664.9 &  & * & * &  & 65 & 74 &  & 38850 & 114433 &  & 724.4 & 629.3 \\ 
&10 & 5 &  & * & * &  & 5 & 2 &  & 1433 & 3026 &  & 717.9 & 647.1 &  & * & * &  & 81 & 82 &  & 42675 & 130112 &  & 705.1 & 607.8 \\ 
&10 & 10 &  & * & * &  & 5 & 2 &  & 1523 & 2564 &  & 712.5 & 641.1 &  & * & * &  & 74 & 100 &  & 35576 & 174085 &  & 699.8 & 600.3 \\ 
&20 & 2 &  & * & * &  & 69 & 51 &  & 46513 & 76261 &  & 1474.2 & 1325.8 &  & \textbf{.195} & .275 &  & 1000 & 1000 &  & 101509 & 238765 &  & 1404.9 & 1144.5 \\ 
&20 & 5 &  & * & * &  & 103 & 156 &  & 65951 & 209595 &  & 1438.2 & 1274.2 &  & \textbf{.211} & .308 &  & 1000 & 1000 &  & 97940 & 225050 &  & 1375.3 & 1080.9 \\ 
&20 & 10 &  & * & * &  & 215 & 207 &  & 150250 & 349335 &  & 1427.7 & 1256.6 &  & \textbf{.230} & .310 &  & 1000 & 1000 &  & 90864 & 257998 &  & 1366.3 & 1058.2 \\ 
&30 & 2 &  & \textbf{.010}$^{\revised{8}}$ & .011$^{\revised{8}}$ &  & $378$ & $527$ &  & 121358 & 514979 &  & 2230.1 & 2037.7 &  & \textbf{.298} & .441 &  & 1500 & 1500 &  & 38474 & 64240 &  & 2024.0 & 1569.7 \\ 
&30 & 5 &  & \textbf{.011}$^{\revised{8}}$ & .014$^{\revised{8}}$ &  & $571$ & $620$ &  & 164852 & 527847 &  & 2173.9 & 1950.3 &  & \textbf{.336} & .474 &  & 1501 & 1500 &  & 33120 & 64339 &  & 1969.4 & 1448.4 \\ 
&30 & 10 &  & .024$^{\revised{8}}$ & \textbf{.014}$^{\revised{8}}$ &  & $630$ & $638$ &  & 202635 & 585234 &  & 2156.5 & 1919.6 &  & \textbf{.349} & .480 &  & 1500 & 1500 &  & 30579 & 77100 &  & 1951.2 & 1404.0 \\ 
&40 & 2 &  & .042$^{\revised{6}}$ & \textbf{.037}$^{\revised{4}}$ &  & $1551$ & $1615$ &  & 664496 & 2565247 &  & 2979.3 & 2748.6 &  & \textbf{.436} & .545 &  & 2000 & 2000 &  & 23083 & 49050 &  & 2582.0 & 1946.3 \\ 
&40 & 5 &  & \textbf{.045}$^{\revised{6}}$ & .047$^{\revised{2}}$ &  & $1643$ & $1634$ &  & 638323 & 1347868 &  & 2895.6 & 2635.0 &  & \textbf{.579} & .580 &  & 2000 & 2000 &  & 12076 & 30858 &  & 2488.0 & 1751.7 \\ 
&40 & 10 &  & \textbf{.056}$^{\revised{4}}$ & .057$^{\revised{2}}$ &  & $1639$ & $1632$ &  & 599281 & 1584187 &  & 2869.2 & 2595.6 &  & \textbf{.585} & .594 &  & 2000 & 2000 &  & 11847 & 30222 &  & 2456.1 & 1679.6 \\ 
				\Xhline{2\arrayrulewidth}
			\end{tabular}
	\end{adjustbox}}
	\vskip 2ex
	\label{Table: M}
\end{table}

Finally, we study the influence of the big-$M$ parameter. Instead of a coefficient $\gamma=2$ in \cite{park2017bayesian}, we experiment with $M = \gamma \underset{(j,k) \in \overrightarrow{E}}{\max} \, |\beta^{R}_{jk}|$ for $\gamma \in \{2, 5, 10\}$ in Table \ref{Table: M}, where $|\beta^{R}_{jk}|$ denotes the optimal solution of each optimization problem without the constraints to remove cycles. The larger the big-$M$ parameter, the worse the effectiveness of both models. \revised{However, comparing the continuous relaxation objective function values, we observe that} {MISOCP+LN} tightens the formulation using the conic constraints whereas {MIQP+LN} does not have any means to tighten the formulation instead of big-$M$ constraints which have poor relaxation. \revised{In most cases, the {MISOCP+LN} formulation allows more instances to be solved to optimality  than {MIQP+LN}.}   \revised{For larger $m$, because Gurobi solves larger SOCP relaxations in each branch-and-bound node, the MISOCP+LN formulation explores much fewer branch-and-bound nodes and stops with a similar RGAP at termination.} For $M > 2 \underset{(j,k) \in \overrightarrow{E}}{\max} \, |\beta^{R}_{jk}|$, {MISOCP+LN} outperforms {MIQP+LN} in all measures, in most cases.

\subsection{The Effect of Tikhonov Regularization} \label{sec:compl2}
In this subsection, we consider the effect of adding a Tikhonov regularization term to the objective (see Remark \ref{rem:L2}) by considering $\mu \in \{0, \log(n), 2\log(n)\}$ while keeping the values of $\lambda_n = \log(n)$ and $M$ the same as before. Table \ref{Table: mu} demonstrates that for all instances with $\mu>0$, {MISOCP+LN} outperforms {MIQP+LN}. For complete instances with $m=40$ and $\mu=9.2$, {MISOCP+LN} improves the optimality gap from 0.445 to 0.367, an improvement over 21\%. The reason for this improvement is that $\mu>0$ makes the matrix more diagonally dominant; therefore, it makes the conic constraints more effective in tightening the formulation and obtaining a better optimality gap. \revised{Also, {MISOCP+LN} allows more instances to be solved to optimality than {MIQP+LN}.}

\begin{table}[t!]
	\fontsize{60}{30}\selectfont
	\caption{Computational results for different values of $\mu$, * indicates that the problem is solved to the optimality tolerance. Superscript $^{i}$ indicates that out of ten runs, $i$ instances finish before hitting the time limit. Time is averaged over instances that solve within the time limit, RGAP is averaged over instances that reach the time limit. Better RGAPs are in bold.} \label{Table: IP}
	\resizebox{1\textwidth}{!}{
		\begin{adjustbox}{}{}
			\begin{tabular}{llllllllllllllll|lllllllllllllll}							\\ 	 	\Xhline{2\arrayrulewidth}  \Xhline{2\arrayrulewidth} 
				&&& \multicolumn{11}{c}{Moral} &&& \multicolumn{11}{c}{Complete} \\ 
				\Xhline{2\arrayrulewidth}  \Xhline{2\arrayrulewidth} 
				& \multicolumn{2}{c}{Instances} & & \multicolumn{2}{c}{RGAP}  & & \multicolumn{2}{c}{Time} & & \multicolumn{2}{c}{$\#$ nodes} & & \multicolumn{2}{c}{Relaxation OFV} & & \multicolumn{2}{c}{RGAP}  & & \multicolumn{2}{c}{Time} & & \multicolumn{2}{c}{$\#$ nodes} & & \multicolumn{2}{c}{Relaxation OFV} \\ 
				& $m$ &$\mu$  &  & MISOCP & MIQP &  & MISOCP & MIQP & & MISOCP & MIQP &  & MISOCP & MIQP  & &  MISOCP & MIQP &  & MISOCP & MIQP & & MISOCP & MIQP &  & MISOCP & MIQP  \\
				\Xhline{2\arrayrulewidth} 
&10 & 0 &  & * & * &  & 3 & 2 &  & 1306 & 3715 &  & 738.7 & 664.9 &  & * & * &  & 65 & 74 &  & 38850 & 114433 &  & 724.4 & 629.3 \\ 
&10 & 4.6 &  & * & * &  & 4 & 2 &  & 1043 & 2758 &  & 802.0 & 708.5 &  & * & * &  & 69 & 72 &  & 38778 & 119825 &  & 789.3 & 675.7 \\ 
&10 & 9.2 &  & * & * &  & 4 & 2 &  & 1067 & 2231 &  & 858.0 & 748.1 &  & * & * &  & 72 & 74 &  & 36326 & 114383 &  & 843.2 & 712.3 \\ 
&20 & 0 &  & * & * &  & 69 & 51 &  & 46513 & 76261 &  & 1474.2 & 1325.8 &  & \textbf{.195} & .275 &  & 1000 & 1000 &  & 101509 & 238765 &  & 1404.9 & 1144.5 \\ 
&20 & 4.6 &  & * & * &  & 45 & 45 &  & 15111 & 55302 &  & 1604.1 & 1426.5 &  & \textbf{.167} & .242 &  & 1000 & 1000 &  & 102467 & 249490 &  & 1551.7 & 1267.1 \\ 
&20 & 9.2 &  & * & * &  & 43 & 55 &  & 15384 & 62297 &  & 1716.8 & 1515.7 &  &\textbf{.142} & .223 &  & 1000 & 1000 &  & 94360 & 258194 &  & 1668.3 & 1355.1 \\ 
&30 & 0 &  & \textbf{.010}$^{\revised{8}}$ & .011$^{\revised{8}}$ &  & $378$ & $527$ &  & 121358 & 514979 &  & 2230.1 & 2037.7 &  & \textbf{.298} & .441 &  & 1500 & 1500 &  & 38474 & 64240 &  & 2024.0 & 1569.7 \\ 
&30 & 4.6 &  & \textbf{.008}$^{\revised{9}}$ & .011$^{\revised{8}}$ &  & $310$ & $392$ &  & 76668 & 358544 &  & 2432.5 & 2187.7 &  & \textbf{.237} & .387 &  & 1500 & 1500 &  & 45473 & 69258 &  & 2286.4 & 1788.5 \\ 
&30 & 9.2 &  & \textbf{.009}$^{\revised{9}}$ & .010$^{\revised{8}}$ &  & $67$ & $377$ &  & 12410 & 320632 &  & 2612.6 & 2311.4 &  & \textbf{.209} & .367 &  & 1500 & 1500 &  & 41241 & 68661 &  & 2484.3 & 1915.7 \\ 
&40 & 0 &  & .042$^{\revised{6}}$ & \textbf{.037}$^{\revised{4}}$ &  & $1551$ & $1615$ &  & 664496 & 2565247 &  & 2979.3 & 2748.6 &  & \textbf{.436} & .545 &  & 2000 & 2000 &  & 23083 & 49050 &  & 2582.0 & 1946.3 \\ 
&40 & 4.6 &  & \textbf{.027}$^{\revised{8}}$ & .029$^{\revised{4}}$ &  & $1331$ & $1620$ &  & 422654 & 1303301 &  & 3281.6 & 2972.8 &  & \textbf{.354} & .471 &  & 2000 & 2000 &  & 13209 & 30995 &  & 2985.4 & 2261.3 \\ 
&40 & 9.2 &  & \textbf{.020}$^{\revised{8}}$ & .028$^{\revised{6}}$ &  & $870$ & $1507$ &  & 239214 & 1762210 &  & 3575.4 & 3165.3 &  & \textbf{.367} & .445 &  & 2000 & 2000 &  & 13884 & 54638 &  & 3321.7 & 2468.7 \\ 
				\Xhline{2\arrayrulewidth}
			\end{tabular}
		\end{adjustbox}}
		\vskip 2ex
		\label{Table: mu}
	\end{table}

\subsection{Practical Implications of  Early Stopping}\label{sec:compearly}

In this subsection, we evaluate the quality of the estimated DAGs obtained from {MISOCP+LN} by comparing them with the ground truth DAG. To this end, we use \revised{three measures}: the average structural Hamming distance $(\mathrm{SHD})$ which counts the number of arc differences (additions, deletions, or reversals) required to transform the estimated DAG to the true DAG, \revised{the average false positive rate (FPR) which is the proportion of edges appearing in the estimated DAG but not the true DAG and the average true positive rate (TPR) which is the proportion of edges appearing in both the true DAG and the estimated DAG.} Since Gurobi sets a minimum relative gap RGAP$=1e^{-4}$, the solution obtained within this relative gap is considered optimal. Finally, because the convergence of the branch-and-bound process may be slow in some cases, we set a time limit of 100$m$.

To test the quality of the solution obtained with an early stopping criterion, we  set the absolute optimality gap parameter  as $GAP=\frac{\log(m)}{n}s_m$ and the $\ell_0$ regularization parameter as $\lambda_n=\log m$ as suggested by the discussion following Proposition~\ref{EarlyProp} for achieving a consistent estimate. 
We compare the resulting suboptimal solution to the  solution obtained by setting $\mathrm{GAP}= \mathrm{UB} - \mathrm{LB}=0$  to obtain the truly optimal solution.   

Table \ref{Early} shows the numerical results for the average solution time (in seconds) for instances that are solved within the time limit, the number of instances that were not solved within the time limit, the actual absolute optimality gap at termination, \revised{the average FPR, the average TPR,} the average SHD of the resulting DAGs, 
across 10 runs for moral instances. Table~\ref{Early} indicates that the average SHD for  $GAP=\frac{\log(m)}{n}s_m$ is close to that of  the truly optimal solution, \revised{and the  average (FPR) and (TPR) are  the same between setting $\mathrm{GAP}=\frac{\log(m)}{n}s_m$ and $\mathrm{GAP}= 0$ except for $m = 10$}. Note that a lower GAP \revised{generally leads} to a better SHD score. From a computational standpoint, we  observe that by using the early stopping criterion, we are able to obtain consistent solutions \revised{faster}.  In particular, \revised{the average solution time reduces by $25\%$ for $m = 30$. The number of instances which are solved before hitting the $100m$ time limit are the same for $GAP = 0$ and $\mathrm{GAP}=\frac{\log(m)}{n}s_m$.} Furthermore, stopping early  does not  sacrifice too much from the quality of the resulting DAG as can be seen from the SHD scores.

\begin{table}[t!]
	\centering{
		\caption{\revised{Structural Hamming distances (SHD), False Positive Rate (FPR) and True Positive Rate (TPR) for early stopping with $n= 100, \lambda_n = \log(m)$, GAP $\leq \tau$ for moral instances. The superscripts $^{i}$ indicate that out of ten runs, $i$ instances finish before hitting the time limit. Time is averaged over instances that solve within the time limit, GAP and RGAP are averaged over instances that terminate early.}}
\footnotesize{
		\begin{tabular}{ll|cccccc|cccccc}
	\hline
			& & \multicolumn{6}{c|}{$\tau= 0$}  & \multicolumn{6}{c}{$\tau=\frac{\log (m)}{n}s_m$}  \\ \hline
			$m$ & $s_m$ & Time & GAP& RGAP & SHD %(std) 
			& FPR & TPR & Time & GAP& RGAP & SHD %(std)  
			& FPR & TPR\\ \hline
			10 & 19 & $1.28^{10}$  & $0.00$ & 0.000  & 0.75 % (1.41)$  
			& $0.04$ & $1.00$ & $1.28^{10}$ & $0.06$ & 0.000 & 0.77 %(0.67)$  
			& $0.02$ & $1.00$ \\ 
			20 & 58 & $6.15^{9}$ & $0.71$ & 0.000 & 1.50 % (0.93)$ 
			&$0.01$ & $1.00$ &  $6.04^{9}$ & $1.44$ & 0.001  & 2.00 %(1.51)$ 
			&  $0.01$ & $1.00$\\
			30 & 109 & $37.40^{7}$ & $13.96$ & 0.004 & 1.67 %(1.37)$ 
			& $0.00$ & $1.00$ & $27.63^{7}$ & $15.80$ & 0.005 & 1.66 % (1.37)$ 
			& $0.00$ & $1.00$\\ 
			40 & 138 & $935^{2}$ & $63.06$ & 0.131 &5.00 % (0)$ 
			& $0.01$ & $1.00$ & $640.15^{2}$ & $66.34$ & 0.132 & 5.00 %(0)$ 
			&$0.01$ & $1.00$ \\ \hline
		\end{tabular}
		\label{Early}}
}
\end{table}

\revised{
\subsection{Comparison to Other Benchmarks} \label{sec:comp-sota}

In this section, we compare the performance of  MISOCP against the state-of-the-art benchmarks. These experiments are executed on a laptop with a Windows 10 operating system, an Intel Core i7-8750H 2.2-GHz CPU, 8-GB DRAM using Python 3.8 with Gurobi 9.1.1 Optimizer.  

The benchmarks considered in this section include the top-down approach (EqVarDAG-TD) and the high-dimensional top-down approach (EqVarDAG-HD-TD) of \cite{ChenDrtonWang19}, as well as the high-dimensional bottom-up approach (EqVarDAG-HD-BU) of \cite{pmlr-v84-ghoshal18a}. 
By taking advantage of the conditions for identifiability in linear SEM models, these benchmark procedures offer polynomial-time algorithms for learning DAGs by iteratively identifying a source (top-down) or sink (bottom-up) node based on solving a series of covariance selection problems. 

We compare the performance of the methods on twelve publicly available networks from \cite{manzour2019integer} and \href{https://www.bnlearn.com/bnrepository/}{Bayesian Network Repository (bnlearn)}.  The number of nodes in these networks ranges from $m=6$ to $m=70$. We generate data from both identifiable and non-identifiable error distributions. In the case of identifiable distributions (ID), we generate the data by using random arc weights $\beta$  from $\mathcal{U}[-1,-0.1] \cup \mathcal{U}[0.1,1]$ and $n=500$ samples standard normal errors. The data for the non-identifiable (NID) error distributions was generated similarly, but from normal errors with non-equal error variances chosen randomly from $\{0.5, 1, 1.5\}$.

As an input superstructure graph to MISOCP, other than the true moral graphs, we also consider a superstructure estimate based on the empirical  correlation matrix (CorEst). This estimate---which is guaranteed to be a super set of the DAG skeleton under the faithfulness assumption---was obtained by testing whether each correlation coefficient is nonzero at $0.05$ significance level; the p-values were obtained using the Fisher's Z-transformation for correlation coefficients. 
The MISOCP with true and correlation matrix superstructures are denoted as MISOCP-True and MISOCP-CorEst, respectively, in Table~\ref{Table:Benchmark1}. 
A time limit of $50m$ (seconds), $\lambda = 2\log(n)$ and the Gurobi RGAP of 0.01 are imposed across the experiments.

Measures of performance of the benchmark algorithms are summarized in columns EqVarDAG-TD, EqVarDAG-HD-TD, and EqVarDAG-HD-BU of Table~\ref{Table:Benchmark1}. The column Time reports the solution time in seconds. For all datasets, the  true networks can be used to evaluate the quality of the estimated networks. We report SHD, TPR, and FPR for all the estimated networks. Given that the true causal network cannot be recovered in the setting of non-identifiable data (NID), we also report the structural SHD between the undirected skeleton of the true DAG and the corresponding skeleton of estimated network; this is denoted as SHDs in Table~\ref{Table:Benchmark1}. 

We observe that most of the EqVarDAG methods solve the problem within a second. With respect to the quality of the estimation, EqVarDAG-TD provides better performance in SHD compared to EqVarDAG-HD-TD and EqVarDAG-HD-BU. 
The column RGAP reports the relative gap at early termination. The symbol (*) denotes that the problem is solved to the optimality tolerance. Compared with the benchmarks, MISOCP with a CorEst or true superstructure requires longer solution times; however, MISOCP consistently provides high SHD and SHDs scores in every network.  Moreover, MISOCP is able to provide the best estimation among all methods in most of the  networks.

Finally, we highlight that in the non-identifiable datasets (NID), MISOCP clearly outperforms the benchmarks. This is, of course, not surprising, as the benchmark algorithms rely on the identifiability assumption and are not guaranteed to work if this assumption is violated. In contrast, in this case, MISOCP is guaranteed to find a member of the Markov equivalence class.

\begin{sidewaystable}[ht]
	\caption{The comparison between MISOCP and the state-of-the-art EqVarDAG methods of \cite{ChenDrtonWang19} and \cite{pmlr-v84-ghoshal18a}.}
	\label{Table:Benchmark1}
	\begin{center}
		\scalebox{0.50}{
		\begin{tabular}{|p{0.8cm}			 |p{2.2cm}||p{0.9cm}p{0.8cm}p{0.8cm}p{0.9cm}p{0.9cm}||p{0.9cm}p{0.8cm}p{0.8cm}p{0.9cm}p{0.9cm}||p{0.9cm}p{0.8cm}p{0.8cm}p{0.9cm}p{0.9cm}||p{1.22cm}p{0.8cm}p{0.8cm}p{0.9cm}p{0.9cm}p{0.9cm}||p{1.22cm}p{0.8cm}p{0.8cm}p{0.9cm}p{0.9cm}p{0.9cm}|}
				\hline				
				& & \multicolumn{5}{c||}{EqVarDAG-HD-BU} & \multicolumn{5}{c||}{EqVarDAG-HD-TD}& \multicolumn{5}{c||}{EqVarDAG-TD}& \multicolumn{6}{c||}{MISOCP-CorEst} & \multicolumn{6}{c|}{MISOCP-True}   \\	
				
				\cline{3-29}				
				{Data} & {Network($m$)} 
				& {Time} & {SHD} & {SHDs} & {TPR} & {FPR}
				& {Time} & {SHD} & {SHDs} & {TPR} & {FPR}
				& {Time} & {SHD} & {SHDs} & {TPR} & {FPR}
				& {Time} & {RGAP} & {SHD} & {SHDs} & {TPR} & {FPR}
				& {Time} & {RGAP} & {SHD} & {SHDs} & {TPR} & {FPR} 
				\\			
				\hline
				\multirow{12}{4em}{ID}					
				&Dsep(6) &$\le 1$ & 3 & 3 & 1 & .001
				&$\le 1$ & 3 & 3 & 1 & .001 
				&$\le 1$ & 0 & 0 & 1 & 0
				&$\le 1$ & * & 0 & 0 & 1 & 0  
				&$\le 1$ & * & 0 & 0 & 1 & 0\\					
				&Asia(8) &$\le 1$ & 0 & 0 & 1 & 0
				&$\le 1$ & 0 & 0 & 1 & 0
				&$\le 1$ & 0 & 0 & 1 & 0
				&$\le 1$ & * & 0 & 0 & 1 & 0
				&$\le 1$ & * & 0 & 0 & 1 & 0\\	
				&Bowling(9)	&$\le 1$ & 5 & 4 & 1 & .002
				&$\le 1$ & 1 & 1 & 1 & .001
				&$\le 1$ & 0 & 0 & 1 & 0
				&$\le 1$ & * & 0 & 0 & 1 & 0
				&$\le 1$ & * & 0 & 0 & 1 & 0\\		
				&InsSmall(15)	&$\le 1$ & 11 & 8 & 1 & .005
				&$\le 1$ & 2  & 2 & 1 & .001
				&$\le 1$ & 2  & 2 & 1 & .001  
				&$\ge 750$ & .05  & 0 & 0 & 1 & 0
				&4 & * & 0 & 0 & 1 &0\\		
				
				&Rain(14)	&$\le 1$ & 5 & 4 & 1 & .002
				&$\le 1$ & 2 & 2  & 1 & .001
				&$\le 1$ & 0 & 0 & 1 & 0
				&155 & * & 0 & 0 & 1 & 0
				&1 & * & 0 & 0 & 1 & 0\\		
				
				&Cloud(16)	&$\le 1$ & 15 & 12 & 1 & .006
				&$\le 1$ & 6 & 6 & 1 & .003
				&$\le 1$  & 0 & 0 & 1 & 0
				&$\ge 800$ & .101 & 0 & 0 & 1 & 0 
				&$\le 1$& *   & 0 & 0 & 1 & 0 \\	
				
				&Funnel(18)	&$\le1$ & 10 & 9 & 1 & .004
				&$\le1$ & 4 & 4 & 1 & .002
				&$\le1$ & 0 & 0 & 1 & 0
				& 5 & * & 0 & 0 & 1 & 0
				&$\le1$ & * & 0 & 0 & 1 & 0\\
				
				&Galaxy(20)  &$\le1$ & 16 & 13 & 1 & .007
				&$\le1$ & 9 & 9 & 1 & .004
				&$\le1$ & 1 & 1 & 1 & .001
				&$\ge 1000$ & .048 & 0 & 0 & 1 & 0
				&2 & * & 0 & 0 & 1 & 0\\	
				&Insurance(27)	&2	& 27 & 23 & .962 & .011
				&2	& 11 & 11 & 1 &.005
				&$\le1$& 0 & 0 & 1 & 0
				&$\ge 1350$ & .216 & 0 & 0 & 1 & 0
				&176 & * & 0 & 0 & 1 & 0\\		
				&Factors(27) &2	& 25 & 25 & .977 & .01
				&2	& 17 & 17 & .985 & .007							
				&$\le1$ & 3 & 3 & .956 & 0
				&$\ge 1350$ & .157 & 0 & 0 & 1 & 0
				&$\ge 1350$ & .036 & 0 & 0 & 1 & 0\\		
				
				&Hailfinder(56) &10 & 78 & 70 & .985 & .033 
				&11 & 19  & 19 & 1 & .008
				&$\le1$ & 1  & 1 & .985 & 0
				&$\ge 2800$ & .21  & 13 & 12 & .955 & .004
				&$\ge 2800$ & .052 & 0 & 0 & 1 & 0\\
				
				&Hepar2(70)   &17 & 147 & 139 & .959 & .062 
				&19 & 70  & 70  & .968 & .029
				&2  & 1  & 1  & .992 & 0
				&$\ge 3500$ & .371 & 134 & 122 & .878 & .052
				&$\ge 3500$ & .096 & 0 & 0 & 1 & 0\\																											
				\hline					
				\multirow{12}{4em}{NID}
				&Dsep(6) &$\le 1$ & 4 & 3 & 1 & .002 
				&$\le 1$ & 2 & 1 & 1 & .001 
				&$\le 1$ & 1 & 1 & .833 & 0
				&$\le 1$ & * & 1 & 1 & .833 & 0
				&$\le 1$ & * & 1 & 1 & .833 & 0\\
				&Asia(8) &$\le 1$ & 16 & 13 & .875 & .006
				&$\le 1$ & 6  & 4  & .875 & .002
				&$\le 1$ & 6  & 4  & .875 & .002
				&$\le 1$ & * & 4 & 2 & .875 & .001
				&$\le 1$ & * & 6 & 3 & .875 & .002\\
				&Bowling(9)	&$\le 1$ & 7 & 6 & .909 & .003
				&$\le 1$ & 7 & 5 & .909 & .003
				&$\le 1$ & 4 & 2 & .909 & .001
				&$\le 1$ & * & 2 & 1 & .909 & .001
				&$\le 1$ & * & 4 & 2 & .909 & .001\\
				&InsSmall(15)	&$\le 1$ & 24 & 19 & .96 & .01
				&$\le 1$ & 8  & 7  & .96 & .003
				&$\le 1$ & 8  & 7  & .96 & .003
				&$\ge 750$ & .029  & 7  & 5 & .88 & .002
				&12 & * & 2  & 1 & .96 & .001\\
				&Rain(14)	&$\le 1$ & 17 & 12 & .944 & .007
				&$\le 1$ & 10 & 7  & .944 & .004
				&$\le 1$ & 4  & 1  & .944 & .001
				&43 & * & 7  & 4  & .833 & .002
				&2         & *     & 4  & 1  & .944 & .001\\
				&Cloud(16)	&$\le 1$ & 19 & 14 & .895 & .007
				&$\le 1$ & 20 & 14 & .895 & .007
				&$\le 1$ & 12 & 6  & .895 & .004
				&$\ge 800$ & .062 & 8 & 5 & .947 & .003
				&$\le 1$& *     & 8 & 2 & .947 & .003\\
				&Funnel(18)	&$\le1$ & 12 & 11 & .944 & .005
				&$\le1$ & 6 & 6 & .944 & .002
				&$\le1$ & 1 & 1 & .944 & 0
				& 8 & * & 1 & 1 & .944 & 0
				&$\le1$ & * & 1 & 1 & .944 & 0\\
				&Galaxy(20)  &$\le1$ & 36 & 28 & .955 & .015
				&$\le1$ & 27 & 20 & .955 & .011
				&$\le1$ & 17 & 10 & .955 & .007
				&$\ge 1000$ & .02 & 8 & 5 & .909 & .003
				&2 & * & 6 & 3 & .955 &	.002\\
				&Insurance(27)	&2	& 40 & 32 & 1 & .017
				&2	& 23 & 19 & .981 &.009
				&$\le1$& 13 & 10 & .9615 & .005
				&$\ge 1350$ & .191 & 11 & 9 & .923 & .003
				&$\ge 1350$ & .055 & 6 & 4 & .962 & .002\\
				&Factors(27) &2 & 12 & 11 & .971 & .004
				&2 & 32 & 24 & .882 & .010
				&$\le1$ & 32 & 24 & .765 & .007
				&$\ge 1350$ & .111 & 19 & 15 & .853 & .004
				&$\ge 1350$ & .062 & 22 & 16 & .868 & .006\\
				&Hailfinder(56) &8 & 103 & 88 & .97 & .043 
				&8 & 90  & 70 & .97 & .038
				&2 & 59  & 40 & .894 & .022
				&$\ge 2800$ & .156  & 20 & 11 & .985 & .008
				&$\ge 2800$ & .096 & 15 & 8  & .985 & .006\\
				&Hepar2(70)   &14 & 110 & 95 & .9919 & .048 
				&16 & 73  & 61  & .992 & .031
				&2  & 52  & 40  & .862 & .015
				&$\ge 3500$ & .199 & 33 & 23 & .935 & .011
				&$\ge 3500$ & .112 & 36 & 18 & .96 & .014\\							
				\hline								
		\end{tabular}}
	\end{center}
\end{sidewaystable}	

}

\section{Conclusion} \label{Sec: Conclusion}
In this paper, we study the problem of learning an optimal directed acyclic graph (DAG) from continuous observational data, where the causal effect among the random variables is linear. The central problem is a quadratic optimization problem with regularization. We present a mixed-integer second order conic program ({MISOCP}) which entails a tighter relaxation than existing formulations with linear constraints. Our \revised{numerical} results show that  {MISOCP} can successfully improve the lower bound and results in better optimality gap when compared with other formulations based on big-$M$ constraints, especially for dense and large instances. Moreover, we establish an early stopping criterion under which we can terminate branch-and-bound and achieve a solution which is asymptotically optimal.   \revised{In addition, we show that our method outperforms two state-of-the-art algorithms, especially on non-identifiable datasets.}

\section*{Acknowledgments}
We thank the AE and three anonymous reviewers for their detailed comments that improved the paper. 
Simge K\"u\c{c}\"ukyavuz and Linchuan Wei were supported, in part, by ONR
grant N00014-19-1-2321 and NSF grant CIF-2007814. 
Ali Shojaie was supported by NSF grant DMS-1561814 and NIH grant R01GM114029. Hao-Hsiang Wu is supported, in part, by MOST Taiwan grant 109-2222-E-009-005-MY2.

\appendix

\section{Alternative linear encodings of  constraints \eqref{CP-con2}}
There are several ways to ensure that the estimated graph does not contain any cycles. The first approach is to add a constraint for each cycle in the graph, so that at least one arc in this cycle must not exist in $\mathcal G(B)$.\ A \textit{cutting plane} (CP) method is used to solve such a formulation which may require generating an exponential number of constraints. 
In particular, let $\mathcal{C}$ be the set of all possible directed cycles and $\mathcal{C}_A \in \mathcal{C}$ be the set of arcs defining a cycle.  The CP formulation removes cycles by imposing the following constraints for \eqref{CP-con2} 
\begin{equation} \label{CE}
 \textbf{CP} \quad \sum_{(j,k ) \in \, \mathcal{C}_A} g_{jk} \leq |\mathcal{C}_A|-1, \quad  \forall \mathcal{C}_A \in \mathcal{C}.  
\end{equation}
\revised{This formulation has exponentially many constraints.}

Another way to rule out cycles is by imposing constraints such that the nodes follow a topological order \citep{park2017bayesian}. A topological ordering is a unique ordering of the nodes of a graph from 1 to $m$ such that the graph contains an arc $(j,k)$ if node $j$ appears before node $k$ in the order.  We refer to this formulation as \textit{topological ordering} (TO). 
Define decision variables $z_{jk} \in \{0,1\}$ for all $(j,k) \in \overrightarrow{E}$ and  $o_{rs} \in \{0,1\}$ for all $r, s \in \{1, \dots, m\}$. The variable $z_{jk}$ takes value 1 if there is an arc $(j,k)$ in the network, and $o_{rs}$ takes value 1 if the topological order of node $r$ equals $s$.  The TO formulation rules out cycles in the graph by the following constraints
\begin{subequations}\label{TO}
	\begin{alignat}{3}
\textbf{TO}	\quad & 	\label{TO-con3}  g_{jk}  \leq z_{jk}, \quad && \forall (j,k) \in \overrightarrow{E}, \\
	\label{TO-con4} & z_{jk} - m z_{kj} \leq \sum_{s \in V} s \, (o_{ks} - o_{js}), \quad&& \forall (j,k) \in \overrightarrow{E},\\
	\label{TO-con5} & \sum_{s \in V} o_{rs} =1 \quad && \forall r \in V, \\ 
	\label{TO-con6} & \sum_{r \in V} o_{rs} =1 \quad  &&\forall s \in V.
	\end{alignat}
\end{subequations}
\revised{This formulation has $\mathcal{O}(m^2)$ variables and $\mathcal{O}(|\overrightarrow{E}|)$ constraints.}

\bibliographystyle{plainnat}
\bibliography{ref}

\begin{thebibliography}{61}
\providecommand{\natexlab}[1]{#1}
\providecommand{\url}[1]{\texttt{#1}}
\expandafter\ifx\csname urlstyle\endcsname\relax
  \providecommand{\doi}[1]{doi: #1}\else
  \providecommand{\doi}{doi: \begingroup \urlstyle{rm}\Url}\fi

\bibitem[Aliferis et~al.(2010)Aliferis, Statnikov, Tsamardinos, Mani, and
  Koutsoukos]{aliferis2010local}
Constantin~F Aliferis, Alexander Statnikov, Ioannis Tsamardinos, Subramani
  Mani, and Xenofon~D Koutsoukos.
\newblock Local causal and {Markov} blanket induction for causal discovery and
  feature selection for classification part {I}: {Algorithms} and empirical
  evaluation.
\newblock \emph{Journal of Machine Learning Research}, 11\penalty0
  (Jan):\penalty0 171--234, 2010.

\bibitem[Aragam and Zhou(2015)]{aragam2015concave}
Bryon Aragam and Qing Zhou.
\newblock Concave penalized estimation of sparse {G}aussian {B}ayesian
  networks.
\newblock \emph{Journal of Machine Learning Research}, 16:\penalty0 2273--2328,
  2015.

\bibitem[Atamt\"urk and G\'omez(2019)]{atamturk2019rank}
Alper Atamt\"urk and Andres G\'omez.
\newblock Rank-one convexification for sparse regression.
\newblock \emph{arXiv preprint arXiv:1901.10334}, 2019.

\bibitem[Barlett and Cussens(2013)]{bartlett2013advances}
Mark Barlett and James Cussens.
\newblock Advances in bayesian network learning using integer programming.
\newblock In \emph{Proceedings of the Twenty-Ninth Conference on Uncertainty in
  Artificial Intelligence}, UAI'13, pages 182--191, Arlington, Virginia, USA,
  2013. AUAI Press.

\bibitem[Bartlett and Cussens(2017)]{bartlett2017integer}
Mark Bartlett and James Cussens.
\newblock Integer linear programming for the {B}ayesian network structure
  learning problem.
\newblock \emph{Artificial Intelligence}, 244:\penalty0 258--271, 2017.

\bibitem[Bertsimas and Van~Parys(2020)]{bertsimas2017sparse}
Dimitris Bertsimas and Bart Van~Parys.
\newblock Sparse high-dimensional regression: Exact scalable algorithms and
  phase transitions.
\newblock \emph{The Annals of Statistics}, 48\penalty0 (1):\penalty0 300--323,
  2020.

\bibitem[Bertsimas et~al.(2016)Bertsimas, King, and
  Mazumder]{bertsimas2016best}
Dimitris Bertsimas, Angela King, and Rahul Mazumder.
\newblock Best subset selection via a modern optimization lens.
\newblock \emph{The Annals of Statistics}, 44\penalty0 (2):\penalty0 813--852,
  2016.

\bibitem[B{\"u}hlmann and van~de Geer(2011)]{bulmann2011statistics}
Peter B{\"u}hlmann and Sara~A van~de Geer.
\newblock \emph{Statistics for high-dimensional data: methods, theory and
  applications}.
\newblock Springer, 2011.

\bibitem[Chen et~al.(2019)Chen, Drton, and Wang]{ChenDrtonWang19}
Wenyu Chen, Mathias Drton, and Y~Samuel Wang.
\newblock {On causal discovery with an equal-variance assumption}.
\newblock \emph{Biometrika}, 106\penalty0 (4):\penalty0 973--980, 09 2019.

\bibitem[Cui et~al.(2013)Cui, Zheng, Zhu, and Sun]{cui2013convex}
XT~Cui, XJ~Zheng, SS~Zhu, and XL~Sun.
\newblock Convex relaxations and {MIQCQP} reformulations for a class of
  cardinality-constrained portfolio selection problems.
\newblock \emph{Journal of Global Optimization}, 56\penalty0 (4):\penalty0
  1409--1423, 2013.

\bibitem[Cussens(2010)]{cussens2010maximum}
James Cussens.
\newblock Maximum likelihood pedigree reconstruction using integer programming.
\newblock In \emph{WCB@ ICLP}, pages 8--19, 2010.

\bibitem[Cussens(2011)]{cussens2012bayesian}
James Cussens.
\newblock Bayesian network learning with cutting planes.
\newblock In \emph{Proceedings of the Twenty-Seventh Conference on Uncertainty
  in Artificial Intelligence}, UAI'11, pages 153--160, Arlington, Virginia,
  USA, 2011. AUAI Press.

\bibitem[Cussens et~al.(2017{\natexlab{a}})Cussens, Haws, and
  Studen{\`y}]{cussens2017polyhedral}
James Cussens, David Haws, and Milan Studen{\`y}.
\newblock Polyhedral aspects of score equivalence in {B}ayesian network
  structure learning.
\newblock \emph{Mathematical Programming}, 164\penalty0 (1-2):\penalty0
  285--324, 2017{\natexlab{a}}.

\bibitem[Cussens et~al.(2017{\natexlab{b}})Cussens, J{\"a}rvisalo, Korhonen,
  and Bartlett]{cussens2017bayesian}
James Cussens, Matti J{\"a}rvisalo, Janne~H Korhonen, and Mark Bartlett.
\newblock {B}ayesian network structure learning with integer programming:
  Polytopes, facets and complexity.
\newblock \emph{J. Artif. Intell. Res.(JAIR)}, 58:\penalty0 185--229,
  2017{\natexlab{b}}.

\bibitem[Dong et~al.(2015)Dong, Chen, and Linderoth]{dong2015regularization}
Hongbo Dong, Kun Chen, and Jeff Linderoth.
\newblock Regularization vs. relaxation: {A} conic optimization perspective of
  statistical variable selection.
\newblock \emph{arXiv preprint arXiv:1510.06083}, 2015.

\bibitem[Drton and Maathuis(2017)]{drton2017structure}
Mathias Drton and Marloes~H Maathuis.
\newblock Structure learning in graphical modeling.
\newblock \emph{Annual Review of Statistics and Its Application}, 4:\penalty0
  365--393, 2017.

\bibitem[Frangioni and Gentile(2006)]{frangioni2006perspective}
Antonio Frangioni and Claudio Gentile.
\newblock Perspective cuts for a class of convex 0--1 mixed integer programs.
\newblock \emph{Mathematical Programming}, 106\penalty0 (2):\penalty0 225--236,
  2006.

\bibitem[Frangioni and Gentile(2007)]{frangioni2007sdp}
Antonio Frangioni and Claudio Gentile.
\newblock {SDP} diagonalizations and perspective cuts for a class of
  nonseparable {MIQP}.
\newblock \emph{Operations Research Letters}, 35\penalty0 (2):\penalty0
  181--185, 2007.

\bibitem[Frangioni and Gentile(2009)]{frangioni2009computational}
Antonio Frangioni and Claudio Gentile.
\newblock A computational comparison of reformulations of the perspective
  relaxation: {SOCP} vs. cutting planes.
\newblock \emph{Operations Research Letters}, 37\penalty0 (3):\penalty0
  206--210, 2009.

\bibitem[Frangioni et~al.(2011)Frangioni, Gentile, Grande, and
  Pacifici]{frangioni2011projected}
Antonio Frangioni, Claudio Gentile, Enrico Grande, and Andrea Pacifici.
\newblock Projected perspective reformulations with applications in design
  problems.
\newblock \emph{Operations {R}esearch}, 59\penalty0 (5):\penalty0 1225--1232,
  2011.

\bibitem[Fu and Zhou(2013)]{fu2013learning}
Fei Fu and Qing Zhou.
\newblock Learning sparse causal {G}aussian networks with experimental
  intervention: regularization and coordinate descent.
\newblock \emph{Journal of the American Statistical Association}, 108\penalty0
  (501):\penalty0 288--300, 2013.

\bibitem[Gao et~al.(2015)Gao, Wang, and Ji]{gao2015structured}
Tian Gao, Ziheng Wang, and Qiang Ji.
\newblock Structured feature selection.
\newblock In \emph{Proceedings of the {IEEE} International Conference on
  Computer Vision}, pages 4256--4264, 2015.

\bibitem[Ghoshal and Honorio(2017)]{ghoshal2016information}
Asish Ghoshal and Jean Honorio.
\newblock {Information-theoretic limits of {B}ayesian network structure
  learning}.
\newblock In Aarti Singh and Jerry Zhu, editors, \emph{Proceedings of the 20th
  International Conference on Artificial Intelligence and Statistics},
  volume~54 of \emph{Proceedings of Machine Learning Research}, pages 767--775,
  Fort Lauderdale, FL, USA, 20--22 Apr 2017. PMLR.

\bibitem[Ghoshal and Honorio(2018)]{pmlr-v84-ghoshal18a}
Asish Ghoshal and Jean Honorio.
\newblock Learning linear structural equation models in polynomial time and
  sample complexity.
\newblock In Amos Storkey and Fernando Perez-Cruz, editors, \emph{Proceedings
  of the Twenty-First International Conference on Artificial Intelligence and
  Statistics}, volume~84 of \emph{Proceedings of Machine Learning Research},
  pages 1466--1475. PMLR, 09--11 Apr 2018.

\bibitem[G{\'o}mez and Prokopyev(2021)]{gomez2018mixed}
Andr{\'e}s G{\'o}mez and O~Prokopyev.
\newblock A mixed-integer fractional optimization approach to best subset
  selection.
\newblock \emph{INFORMS Journal on Computing}, 33\penalty0 (2):\penalty0
  551--565, 2021.

\bibitem[Han et~al.(2016)Han, Chen, Cheon, and Zhong]{han2016estimation}
Sung~Won Han, Gong Chen, Myun-Seok Cheon, and Hua Zhong.
\newblock Estimation of directed acyclic graphs through two-stage adaptive
  lasso for gene network inference.
\newblock \emph{Journal of the American Statistical Association}, 111\penalty0
  (515):\penalty0 1004--1019, 2016.

\bibitem[Hemmecke et~al.(2012)Hemmecke, Lindner, and
  Studen{\`y}]{hemmecke2012characteristic}
Raymond Hemmecke, Silvia Lindner, and Milan Studen{\`y}.
\newblock Characteristic imsets for learning {Bayesian} network structure.
\newblock \emph{International Journal of Approximate Reasoning}, 53\penalty0
  (9):\penalty0 1336--1349, 2012.

\bibitem[Koivisto(2006)]{koivisto2012advances}
Mikko Koivisto.
\newblock Advances in exact bayesian structure discovery in bayesian networks.
\newblock In \emph{Proceedings of the Twenty-Second Conference on Uncertainty
  in Artificial Intelligence}, UAI'06, pages 241--248, Arlington, Virginia,
  USA, 2006. AUAI Press.

\bibitem[Lazic et~al.(2013)Lazic, Bishop, and Winn]{lazic2013structural}
Nevena Lazic, Christopher Bishop, and John Winn.
\newblock Structural expectation propagation {(SEP)}: {Bayesian} structure
  learning for networks with latent variables.
\newblock In \emph{Artificial Intelligence and Statistics}, pages 379--387,
  2013.

\bibitem[Liu et~al.(2021)Liu, Fattahi, G{\'o}mez, and
  K{\"u}{\c{c}}{\"u}kyavuz]{Liu2021}
Peijing Liu, Salar Fattahi, Andr{\'e}s G{\'o}mez, and Simge
  K{\"u}{\c{c}}{\"u}kyavuz.
\newblock A graph-based decomposition method for convex quadratic optimization
  with indicators.
\newblock \emph{arXiv preprint arXiv:2110.12547}, 2021.

\bibitem[Loh and B{\"u}hlmann(2014)]{loh2014high}
Po-Ling Loh and Peter B{\"u}hlmann.
\newblock High-dimensional learning of linear causal networks via inverse
  covariance estimation.
\newblock \emph{Journal of Machine Learning Research}, 15\penalty0
  (1):\penalty0 3065--3105, 2014.

\bibitem[Manzour et~al.(2021)Manzour, K{\"u}{\c{c}}{\"u}kyavuz, Wu, and
  Shojaie]{manzour2019integer}
Hasan Manzour, Simge K{\"u}{\c{c}}{\"u}kyavuz, Hao-Hsiang Wu, and Ali Shojaie.
\newblock Integer programming for learning directed acyclic graphs from
  continuous data.
\newblock \emph{INFORMS Journal on Optimization}, 3\penalty0 (1):\penalty0
  46--73, 2021.

\bibitem[Meinshausen and B{\"u}hlmann(2006)]{meinshausen2006high}
Nicolai Meinshausen and Peter B{\"u}hlmann.
\newblock High-dimensional graphs and variable selection with the lasso.
\newblock \emph{The Annals of Statistics}, 34\penalty0 (3):\penalty0
  1436--1462, 2006.

\bibitem[Oates et~al.(2016{\natexlab{a}})Oates, Smith, and
  Mukherjee]{JMLR:v17:14-479}
Chris.~J. Oates, Jim~Q. Smith, and Sach Mukherjee.
\newblock Estimating causal structure using conditional {DAG} models.
\newblock \emph{Journal of Machine Learning Research}, 17\penalty0
  (54):\penalty0 1--23, 2016{\natexlab{a}}.

\bibitem[Oates et~al.(2016{\natexlab{b}})Oates, Smith, Mukherjee, and
  Cussens]{oates2016exact}
Chris~J Oates, Jim~Q Smith, Sach Mukherjee, and James Cussens.
\newblock Exact estimation of multiple directed acyclic graphs.
\newblock \emph{Statistics and Computing}, 26\penalty0 (4):\penalty0 797--811,
  2016{\natexlab{b}}.

\bibitem[{\"O}ncan et~al.(2009){\"O}ncan, Alt{\i}nel, and
  Laporte]{oncan2009comparative}
Temel {\"O}ncan, {\.I}~Kuban Alt{\i}nel, and Gilbert Laporte.
\newblock A comparative analysis of several asymmetric traveling salesman
  problem formulations.
\newblock \emph{Computers \& Operations Research}, 36\penalty0 (3):\penalty0
  637--654, 2009.

\bibitem[Ott et~al.(2004)Ott, Imoto, and Miyano]{ott2003finding}
Sascha Ott, Seiya Imoto, and Satoru Miyano.
\newblock Finding optimal models for small gene networks.
\newblock In \emph{Pacific Symposium on Biocomputing}, volume~9, pages
  557--567. World Scientific, 2004.

\bibitem[Park and Klabjan(2017)]{park2017bayesian}
Young~Woong Park and Diego Klabjan.
\newblock {Bayesian} network learning via topological order.
\newblock \emph{Journal of Machine Learning Research}, 18\penalty0
  (99):\penalty0 1--32, 2017.

\bibitem[Park and Klabjan(2020)]{Park2020}
Young~Woong Park and Diego Klabjan.
\newblock Subset selection for multiple linear regression via optimization.
\newblock \emph{Journal of Global Optimization}, 77\penalty0 (3):\penalty0
  543--574, Jul 2020.

\bibitem[Pearl(2009)]{pearl2009causal}
Judea Pearl.
\newblock Causal inference in statistics: {An} overview.
\newblock \emph{Statistics Surveys}, 3:\penalty0 96--146, 2009.

\bibitem[Peters and B{\"u}hlmann(2013)]{peters2013identifiability}
Jonas Peters and Peter B{\"u}hlmann.
\newblock Identifiability of {G}aussian structural equation models with equal
  error variances.
\newblock \emph{Biometrika}, 101\penalty0 (1):\penalty0 219--228, 2013.

\bibitem[Pilanci et~al.(2015)Pilanci, Wainwright, and
  El~Ghaoui]{pilanci2015sparse}
Mert Pilanci, Martin~J Wainwright, and Laurent El~Ghaoui.
\newblock Sparse learning via {B}oolean relaxations.
\newblock \emph{Mathematical Programming}, 151\penalty0 (1):\penalty0 63--87,
  2015.

\bibitem[Raskutti and Uhler(2018)]{raskutti2013learning}
Garvesh Raskutti and Caroline Uhler.
\newblock Learning directed acyclic graph models based on sparsest
  permutations.
\newblock \emph{Stat}, 7\penalty0 (1):\penalty0 e183, 2018.

\bibitem[Saegusa and Shojaie(2016)]{saegusa2016joint}
Takumi Saegusa and Ali Shojaie.
\newblock Joint estimation of precision matrices in heterogeneous populations.
\newblock \emph{Electronic Journal of Statistics}, 10\penalty0 (1):\penalty0
  1341--1392, 2016.

\bibitem[Shojaie and Michailidis(2010)]{shojaie2010penalized}
Ali Shojaie and George Michailidis.
\newblock Penalized likelihood methods for estimation of sparse
  high-dimensional directed acyclic graphs.
\newblock \emph{Biometrika}, 97\penalty0 (3):\penalty0 519--538, 2010.

\bibitem[Silander and Myllym{\"a}ki(2006)]{silander2006simple}
Tomi Silander and Petri Myllym{\"a}ki.
\newblock A simple approach for finding the globally optimal bayesian network
  structure.
\newblock In \emph{Conference on Uncertainty in Artificial Intelligence}, pages
  445--452, 2006.

\bibitem[Solus et~al.()Solus, Wang, and Uhler]{solus2017consistency}
Liam Solus, Yuhao Wang, and Caroline Uhler.
\newblock Consistency guarantees for greedy permutation-based causal inference
  algorithms.
\newblock \emph{Biometrika}, 108\penalty0 (4):\penalty0 795--814.

\bibitem[Sondhi and Shojaie(2019)]{sondhi2019reduced}
Arjun Sondhi and Ali Shojaie.
\newblock The reduced {PC}-algorithm: Improved causal structure learning in
  large random networks.
\newblock \emph{Journal of Machine Learning Research}, 20\penalty0
  (164):\penalty0 1--31, 2019.

\bibitem[Spirtes et~al.(2000)Spirtes, Glymour, and
  Scheines]{spirtes2000causation}
Peter Spirtes, Clark~N Glymour, and Richard Scheines.
\newblock \emph{Causation, prediction, and search}.
\newblock MIT press, 2000.

\bibitem[Studen{\`y} and Haws(2013)]{studeny2013polyhedral}
Milan Studen{\`y} and David~C Haws.
\newblock On polyhedral approximations of polytopes for learning {Bayesian}
  networks.
\newblock \emph{Journal of Algebraic Statistics}, 4\penalty0 (1), 2013.

\bibitem[Uhler et~al.(2013)Uhler, Raskutti, B{\"u}hlmann, and
  Yu]{uhler2013geometry}
Caroline Uhler, Garvesh Raskutti, Peter B{\"u}hlmann, and Bin Yu.
\newblock Geometry of the faithfulness assumption in causal inference.
\newblock \emph{The Annals of Statistics}, 41\penalty0 (2):\penalty0 436--463,
  2013.

\bibitem[van~de Geer and B{\"u}hlmann(2013)]{van2013ell}
Sara van~de Geer and Peter B{\"u}hlmann.
\newblock $\ell_0$-penalized maximum likelihood for sparse directed acyclic
  graphs.
\newblock \emph{The Annals of Statistics}, 41\penalty0 (2):\penalty0 536--567,
  2013.

\bibitem[Vershynin(2012)]{vershynin2012covmat}
Roman Vershynin.
\newblock How close is the sample covariance matrix to the actual covariance
  matrix?
\newblock \emph{Journal of Theoretical Probability}, 25\penalty0 (3):\penalty0
  655--686, 2012.

\bibitem[Wei et~al.(2020)Wei, G{\'o}mez, and
  K{\"u}{\c{c}}{\"u}kyavuz]{wei2019convexification}
Linchuan Wei, Andr{\'e}s G{\'o}mez, and Simge K{\"u}{\c{c}}{\"u}kyavuz.
\newblock On the convexification of constrained quadratic optimization problems
  with indicator variables.
\newblock In Daniel Bienstock and Giacomo Zambelli, editors, \emph{Integer
  Programming and Combinatorial Optimization}, pages 433--447, Cham, 2020.
  Springer International Publishing.

\bibitem[Wei et~al.(2021)Wei, Atamt{\"u}rk, G{\'o}mez, and
  K{\"u}{\c{c}}{\"u}kyavuz]{Wei2021}
Linchuan Wei, Alper Atamt{\"u}rk, Andr{\'e}s G{\'o}mez, and Simge
  K{\"u}{\c{c}}{\"u}kyavuz.
\newblock On the convex hull of convex quadratic optimization problems with
  indicators.
\newblock \emph{arXiv preprint arXiv:2201.00387}, 2021.

\bibitem[Wei et~al.(2022)Wei, G{\'o}mez, and K{\"u}{\c{c}}{\"u}kyavuz]{Wei2022}
Linchuan Wei, Andr{\'e}s G{\'o}mez, and Simge K{\"u}{\c{c}}{\"u}kyavuz.
\newblock Ideal formulations for constrained convex optimization problems with
  indicator variables.
\newblock \emph{Mathematical Programming}, 192\penalty0 (1):\penalty0 57--88,
  2022.

\bibitem[Xiang and Kim(2013)]{xiang2013lasso}
Jing Xiang and Seyoung Kim.
\newblock A* lasso for learning a sparse {B}ayesian network structure for
  continuous variables.
\newblock In \emph{Advances in Neural Information Processing Systems}, pages
  2418--2426, 2013.

\bibitem[Xie and Deng(2020)]{xie2018ccp}
Weijun Xie and Xinwei Deng.
\newblock Scalable algorithms for the sparse ridge regression.
\newblock \emph{SIAM Journal on Optimization}, 30\penalty0 (4):\penalty0
  3359--3386, 2020.

\bibitem[Zhang(2010)]{zhang2010nearly}
Cun-Hui Zhang.
\newblock Nearly unbiased variable selection under minimax concave penalty.
\newblock \emph{The Annals of Statistics}, 38\penalty0 (2):\penalty0 894--942,
  2010.

\bibitem[Zheng et~al.(2014)Zheng, Sun, and Li]{zheng2014improving}
Xiaojin Zheng, Xiaoling Sun, and Duan Li.
\newblock Improving the performance of {MIQP} solvers for quadratic programs
  with cardinality and minimum threshold constraints: A semidefinite program
  approach.
\newblock \emph{INFORMS Journal on Computing}, 26\penalty0 (4):\penalty0
  690--703, 2014.

\bibitem[Zheng et~al.(2018)Zheng, Aragam, Ravikumar, and Xing]{zheng2018dags}
Xun Zheng, Bryon Aragam, Pradeep Ravikumar, and Eric~P. Xing.
\newblock {DAGs} with {NO TEARS:} continuous optimization for structure
  learning.
\newblock In \emph{Proceedings of the 32nd International Conference on Neural
  Information Processing Systems}, NIPS'18, pages 9492--9503, Red Hook, NY,
  USA, 2018. Curran Associates Inc.

\end{thebibliography}

\end{document}